\documentclass[final,leqno,onefignum,onetabnum]{siamltex1213}
\usepackage{amsmath}
\usepackage{bm}
\usepackage{amsfonts}
\usepackage{amssymb}
\usepackage{mathrsfs}
\usepackage{graphicx}
\usepackage{mathtools}
%\floatstyle{ruled}
\newcommand{\myalgsize}{\footnotesize}
\usepackage{algorithm,algcompatible}
\newtheorem{remark}{Remark}
\newtheorem{assumption}{Assumption}

%%% EDITING SHORTCUTS -----------------------------------------------------------------------------

%%% TEACHING SHORTCUTS -----------------------------------------------------------------------------

%%% MATH SHORTCUTS -----------------------------------------------------------------------------

% BEGIN: THIS CODE DEFINES THE \widebar COMMAND, WITH ADJUSTMENTS 

\makeatletter
\let\save@mathaccent\mathaccent
\newcommand*\if@single[3]{%
  \setbox0\hbox{${\mathaccent"0362{#1}}^H$}%
  \setbox2\hbox{${\mathaccent"0362{\kern0pt#1}}^H$}%
  \ifdim\ht0=\ht2 #3\else #2\fi
  }
%The bar will be moved to the right by a half of \macc@kerna, which is computed by amsmath:
\newcommand*\rel@kern[1]{\kern#1\dimexpr\macc@kerna}
%If there's a superscript following the bar, then no negative kern may follow the bar;
%an additional {} makes sure that the superscript is high enough in this case:
\newcommand*\widebar[1]{\@ifnextchar^{{\wide@bar{#1}{0}}}{\wide@bar{#1}{1}}}
%Use a separate algorithm for single symbols:
\newcommand*\wide@bar[2]{\if@single{#1}{\wide@bar@{#1}{#2}{1}}{\wide@bar@{#1}{#2}{2}}}
\newcommand*\wide@bar@[3]{%
  \begingroup
  \def\mathaccent##1##2{%
%Enable nesting of accents:
    \let\mathaccent\save@mathaccent
%If there's more than a single symbol, use the first character instead (see below):
    \if#32 \let\macc@nucleus\first@char \fi
%Determine the italic correction:
    \setbox\z@\hbox{$\macc@style{\macc@nucleus}_{}$}%
    \setbox\tw@\hbox{$\macc@style{\macc@nucleus}{}_{}$}%
    \dimen@\wd\tw@
    \advance\dimen@-\wd\z@
%Now \dimen@ is the italic correction of the symbol.
    \divide\dimen@ 3
    \@tempdima\wd\tw@
    \advance\@tempdima-\scriptspace
%Now \@tempdima is the width of the symbol.
    \divide\@tempdima 10
    \advance\dimen@-\@tempdima
%Now \dimen@ = (italic correction / 3) - (Breite / 10)
    \ifdim\dimen@>\z@ \dimen@0pt\fi
%The bar will be shortened in the case \dimen@<0 !
    \rel@kern{0.6}\kern-\dimen@
    \if#31
      \overline{\rel@kern{-0.6}\kern\dimen@\macc@nucleus\rel@kern{0.4}\kern\dimen@}%
      \advance\dimen@0.4\dimexpr\macc@kerna
%Place the combined final kern (-\dimen@) if it is >0 or if a superscript follows:
      \let\final@kern#2%
      \ifdim\dimen@<\z@ \let\final@kern1\fi
      \if\final@kern1 \kern-\dimen@\fi
    \else
      \overline{\rel@kern{-0.6}\kern\dimen@#1}%
    \fi
  }%
  \macc@depth\@ne
  \let\math@bgroup\@empty \let\math@egroup\macc@set@skewchar
  \mathsurround\z@ \frozen@everymath{\mathgroup\macc@group\relax}%
  \macc@set@skewchar\relax
  \let\mathaccentV\macc@nested@a
%The following initialises \macc@kerna and calls \mathaccent:
  \if#31
    \macc@nested@a\relax111{#1}%
  \else
%If the argument consists of more than one symbol, and if the first token is
%a letter, use that letter for the computations:
    \def\gobble@till@marker##1\endmarker{}%
    \futurelet\first@char\gobble@till@marker#1\endmarker
    \ifcat\noexpand\first@char A\else
      \def\first@char{}%
    \fi
    \macc@nested@a\relax111{\first@char}%
  \fi
  \endgroup
}
\makeatother
% END: THIS CODE DEFINES THE \widebar COMMAND, WITH ADJUSTMENTS 

%argmin and argmax
\DeclareMathOperator*{\argmin}{\arg\!\min}

%real numbers
\newcommand{\real}{{\rm{I\hspace{-.75mm}R}}}

%operations
 %dominates, for multi-obj. work

%sums and products

%fractions

%\newcommand{\onehalf}[1]{\frac{#1}{2}}

%limits

%Statistical quantities and expressions
\renewcommand*{~}{\relax\ifmmode\sim\else\nobreakspace{}\fi}%GET TILDE FOR "IS DISTRIBUTED AS"

%parentheses

%mbox shortcuts

%%% FORMATTING SHORTCUTS -----------------------------------------------------------------------------

%%% LETTERS -----------------------------------------------------------------------------

%STATISTICAL LETTERS

%BARS -------------------------
%bars: roman

\newcommand{\Fbar}{\bar{F}}

%bars: greek

%HATS -------------------------
%hats: roman

%hats: greek
\newcommand{\alphahat}{\hat{\alpha}}

\newcommand{\rhohat}{\hat{\rho}}
\newcommand{\sigmahat}{\hat{\sigma}}

%TILDE -------------------------

\newcommand{\Ntilde}{\widetilde{N}}

%tilde: greek

\newcommand{\Deltatilde}{\tilde{\Delta}}

%BOLDFACE -- for compatibility with INFORMS formats
%roman LOWER

\newcommand{\BFe}{\bm{e}}

\newcommand{\BFs}{\bm{s}}

\newcommand{\BFx}{\bm{x}}
\newcommand{\BFy}{\bm{y}}
\newcommand{\BFz}{\bm{z}}
%roman UPPER

\newcommand{\BFG}{\bm{G}}

\newcommand{\BFX}{\bm{X}}
\newcommand{\BFS}{\bm{S}}
\newcommand{\BFY}{\bm{Y}}

%greek
\newcommand{\BFalpha}{\bm{\alpha}}

%symbols

%TWO OR MORE MODIFICATIONS

%bold and bar: roman

%bold and hat: roman

%bold and hat: greek

%bold and tilde: roman

\newcommand{\BFXtilde}{\tilde{\BFX}}

%bold and tilde: greek

%bar and hat: greek

%MATHCAL 

\newcommand{\mcB}{\mathcal{B}}

\newcommand{\mcD}{\mathcal{D}}

\newcommand{\mcF}{\mathcal{F}}

\newcommand{\mcK}{\mathcal{K}}

\newcommand{\mcO}{\mathcal{O}}

\newcommand{\mcY}{\mathcal{Y}}

%mathcal and bar

%mathcal and hat

 %mathcal and tilde
 \newcommand{\mcYtilde}{\tilde{\mcY}}

%MATHBB 

\newcommand{\mbE}{\mathbb{E}}

\newcommand{\mbN}{\mathbb{N}}

\newcommand{\mbP}{\mathbb{P}}

%MATHFRAK

%mathbb with hat

%%% SPACING -----------------------------------------------------------------------------

%%% OPERATIONS -----------------------------------------------------------------------------

\newcommand{\defn}{\mathrel{\mathop:}=}

%%% TREATMENT OF FIGURES -----------------------------------------------------------------------------
    % Alter some LaTeX defaults for better treatment of figures:
    % See p.105 of "TeX Unbound" for suggested values.
    % See pp. 199-200 of Lamport's "LaTeX" book for details.
    %   General parameters, for ALL pages:
    	% max fraction of floats at top
    	% max fraction of floats at bottom
    %   Parameters for TEXT pages (not float pages):
    \setcounter{topnumber}{2}
    \setcounter{bottomnumber}{2}
    \setcounter{totalnumber}{4}     % 2 may work better
    	% allow minimal text w. figs
    %   Parameters for FLOAT pages (not text pages):
    	% require fuller float pages
	% N.B.: floatpagefraction MUST be less than topfraction !!
    	% require fuller float pages

\pdfminorversion=6

\title{%
  ASTRO-DF: A Class of Adaptive Sampling \\ 
Trust-Region Algorithms for Derivative-Free Stochastic Optimization
}
\author{Sara Shashaani \thanks{School of Industrial Engineering, Purdue University (email: \email{sshashaa@purdue.edu})} \and Fatemeh S. Hashemi \thanks{Department of Industrial and Systems Engineering, Virginia Tech (email: \email{fatemeh@vt.edu})} \and Raghu Pasupathy \thanks{Department of Statistics, Purdue University (email: \email{pasupath@purdue.edu})}
}

\date{}
\begin{document}
\maketitle
%\slugger{siopt}{xxxx}{xx}{x}{x--x}%slugger should be set to mms, siap, sicomp, sicon, sidma, sima, simax, sinum, siopt, sisc, or sirev

\begin{abstract}
We consider unconstrained optimization problems where only ``stochastic" estimates of the objective function are observable as replicates from a Monte Carlo oracle. The Monte Carlo oracle is assumed to provide no direct observations of the function gradient. We present ASTRO-DF --- a class of derivative-free trust-region algorithms, where a stochastic local interpolation model is constructed, optimized, and updated iteratively. Function estimation and model construction within ASTRO-DF is \emph{adaptive} in the sense that the extent of Monte Carlo sampling is determined by continuously monitoring and balancing metrics of sampling error (or variance) and structural error (or model bias) within ASTRO-DF. Such balancing of errors is designed to ensure that Monte Carlo effort within ASTRO-DF is sensitive to algorithm trajectory, sampling more whenever an iterate is inferred to be close to a critical point and less when far away. We demonstrate the almost-sure convergence of ASTRO-DF's iterates to a first-order critical point when using linear or quadratic stochastic interpolation models. The question of using more complicated models, e.g., regression or stochastic kriging, in combination with adaptive sampling is worth further investigation and will benefit from the methods of proof presented here. We speculate that ASTRO-DF's iterates achieve the canonical Monte Carlo convergence rate, although a proof remains elusive. 
\end{abstract}

\begin{keywords}derivative-free optimization, simulation optimization, stochastic optimization, trust-region \end{keywords}

%\begin{AMS}\end{AMS}

\pagestyle{myheadings}
\thispagestyle{plain}
\markboth{Shashaani, Hashemi, and Pasupathy}{ASTRO-DF}

\section{INTRODUCTION}
\label{sec:intro}
We consider unconstrained stochastic optimization (SO) problems, that is, optimization problems in continuous space where the objective function(s) can only be expressed implicitly via a Monte Carlo oracle. The Monte Carlo oracle is assumed to not provide any direct observations of the function derivatives. 

SO has recently gathered attention due to its versatile formulation, allowing the user to specify functions involved in an optimization problem implicitly, e.g., through a stochastic simulation. As a result, SO allows virtually any level of problem complexity to be embedded, albeit at the possible price of a computationally burdensome and slow Monte Carlo oracle. SO has seen wide recent adoption --- see, for example, applications in telecommunication networks \cite{hou2014applied}, traffic control \cite{osorio2009surrogate}, epidemic forecasting \cite{nsoesie2013simulation} and health care \cite{alagoz1}. Recent editions of the Winter Simulation Conference (\texttt{www.informs-sim.org}) have dedicated an entire track to the SO problem and its various flavors. For a library of SO problems, see~\texttt{www.simopt.org} and \cite{pasupathy2006testbed,pasupathy_testbed2011}.

\subsection{Problem Statement}
The SO problem we consider is formally stated as follows: 
\begin{equation} \mbox{Problem } P: \mbox{minimize } f\left(\BFx\right) \mbox{ subject to } \BFx\in \real^d,\end{equation}
where $f:\real^{d}\rightarrow\real$ is bounded from below and has Lipschitz continuous gradients. Furthermore, the function $f(\BFx) = \mathbb{E}[F(\BFx)]$ is the expectation of a random function $F(\BFx)$ that is observable through Monte Carlo. This means, for instance, that one can generate $n$ identically distributed samples or replicates $F_i(\BFx), j =1, 2, \ldots,n$ of $F(\BFx)$ by ``executing" the Monte Carlo simulation $n$ times at the point $\BFx$. This leads to the estimator $\Fbar\left(\BFx,n\right)=n^{-1}\sum_{j=1}^{n}F_{j}\left(\BFx\right)$ having standard error estimated as $\sigmahat_F\left(\BFx,n\right)/\sqrt{n}$ where $\sigmahat^{2}_F\left(\BFx,n\right)= n^{-1}\sum_{j=1}^{n}\left(F_{j}\left(\BFx\right)-\Fbar\left(\BFx,n\right)\right)^{2}$. We assume that no direct observations of the gradient $\nabla f(\cdot)$ , e.g., through IPA~\cite[p. 214]{asmussen2007stochastic}, are available through the Monte Carlo oracle. This means that methods seeking a gradient estimate need to resort to indirect methods such as as finite differencing~\cite[p. 209]{asmussen2007stochastic}, leading to biased estimators.

An algorithm for solving the above problem will be evaluated based on its ability to return a (random) sequence of iterates $\{\BFX_{k}\}$ converging in some rigorously defined probabilistic metric to a first- or second-order critical point of the function $f$. Thus, each ``run" of a solution algorithm will return a random sequence of iterates $\{\BFX_{k}\}$, and SO algorithms that return iterate sequences $\{\BFX_{k}\}$ guaranteed to converge to a critical point with probability one  will be called \emph{consistent}. %The phrase ``canonical rate" is sometimes used to refer to the fastest achievable convergence rate under generic Monte Carlo sampling. In the current context, we will say that the random sequence of iterates $\{\BFX_{k}\}$ achieves the canonical Monte Carlo rate if $\sqrt{W_k} \, \left\|\nabla f(\BFX_{k}) \right\|= \mcO_p(1)$, where $W_k$ is the total Monte Carlo effort expended after $k$ iterations.

\subsection{Complications}
The presence of a Monte Carlo oracle lends flexibility to the SO problem formulation, but it also brings with it a simply-stated complication: the lack of uniform deterministic error guarantees. Specifically, suppose $f\left(\BFx,n\right)$ is the Monte Carlo estimate of the unknown desired function value $f(\BFx)$ at the point $\BFx$, and $n$ represents the extent of Monte Carlo effort. Then, simple probability arguments reveal that deterministic guarantees of the sort $|f\left(\BFx,n\right) - f\left(\BFx\right)| \leq \epsilon, \epsilon>0$ do not hold irrespective of the size of $n$; instead, one has to be content with probabilistic precision guarantees of the form $\mbP\{| f\left(\BFx,n\right) - f\left(\BFx\right)| > \epsilon\} \leq \alpha$ for $n \geq n_0(\alpha)$. The analogous situation for function derivative estimation using Monte Carlo is worse. If the derivative estimate $\hat{\nabla} f(\BFx)\defn (\hat{\nabla}_1 f(\BFx), \hat{\nabla}_2 f(\BFx), \ldots, \hat{\nabla}_q f(\BFx))$ is constructed using a central-difference approximation as $$\hat{\nabla}_i f(\BFx) = (2c_n)^{-1}(f(\BFx+ c_n\BFe_{i}, n) - f(\BFx - c_n\BFe_{i}, n)), i = 1,2,\ldots,q,$$ then, as in the function estimation context, no uniform guarantees on the accuracy of $\hat{\nabla} f(\BFx)$ are available in general. Furthermore, the rate at which $\hat{\nabla} f(\BFx)$ converges to 
$\nabla f(\BFx)$ depends crucially on the choice of $\{c_n\}$, with the best possible rate $\mcO(n^{-1/3})$ under generic Monte Carlo sampling being much slower than the corresponding $\mcO(n^{-1/2})$ rate for function estimation. (See~\cite{asmussen2007stochastic} for this and related results.) Most importantly, implementing such finite-difference derivative estimates within an SO algorithm is well recognized to be a delicate issue, easily causing instabilities. In any event, the lack of uniform deterministic guarantees in the SO context means that estimation error inevitably accumulates across iterations of an algorithm, thereby threatening convergence guarantees of the resulting iterates. Algorithms for solving SO have to somehow contend with such potential non-convergence due to mischance, either through the introduction of gain sequences as in stochastic approximation~\cite{kushner2} or through appropriate sampling as in sample average approximation or retrospective approximation~\cite{kimpashen14,pas10}.

A second complication within SO, but one that it partially shares with black-box deterministic optimization contexts, is the lack of information about function structure. Structural properties such as convexity, uni-modality, and differentiability, if known to be present, can be exploited when designing optimization algorithms. Such properties, when appropriate, are usually assumed within the deterministic context, and an appropriate solution algorithm devised. In SO, however, structural assumptions about the underlying true objective and constraint function, even if correct, may not provide as much leverage during algorithm development. This is because, due to the presence of stochastic error, the true objective and constraint functions are never directly observed; and, making structural assumptions about their observed sample-paths is far more suspect. 

\begin{remark} Another aspect that is unique to SO is noteworthy. Monte Carlo calls are typically the most compute-intensive operations within SO contexts. And, depending on the nature of the SO algorithm, different number of Monte Carlo calls may be expended across iterations, e.g., constant in Stochastic Approximation (SA)~\cite{kushner2}, varying but predetermined in Retrospective Approximation (RA)~\cite{pas10}, or random in Sampling Controlled Stochastic Recursion (SCSR)~\cite{pasglyghohas14}. This means that the elemental measure of effort in SO --- the number of Monte Carlo oracle calls --- may not have a simple relationship with the notion of ``iterations" defined within the specific SO algorithm, forcing a need for more careful book-keeping. This is why iterative SO algorithms are well-advised to measure convergence and convergence rates not in terms of the number of iterations, but rather in terms of the total number of Monte Carlo calls. \end{remark} %A more subtle issue is that the stochastic element within SO imposes a certain upper bound on the maximum achievable convergence rate (also known as \emph{canonical rate}~\cite{asmussen2007stochastic}) of SO algorithms.\end{remark}

\subsection{ASTRO-DF and Overview of Contribution}

Our particular focus in this research is that of developing a class of algorithms for solving low to moderate dimensional SO problems that have no readily discernible structure. We are inspired by the analogous problem in the deterministic context that has spurred the development of a special and arguably very useful class  of optimization methods called model-based trust-region derivative-free (TRO-DF) algorithms~\cite{conschvic2009,powell2002uobyqa,conn2009global,bandeira2014convergence}. TRO-DF algorithms are typified by two aspects: (i) they eschew the direct computation and use of derivatives for searching, and instead rely on constructed models of guaranteed accuracy in specified ``trust-regions"; (ii) the algorithmic search evolves by repeatedly constructing and optimizing a local model within a dynamic trust-region, explicitly restricting the distance between the successive iterates returned by the algorithm. The aspect in (i) is particularly suited for adaptation to SO contexts where direct derivative estimation can be delicate and unstable, requiring careful choice of step-sizes~\cite{asmussen2007stochastic}; the aspect in (i) also aids efficiency because models constructed in previous iterations can be re-used with some updating, and no effort is expended for explicit estimation of derivatives. The aspect in (ii) runs counter to efficiency, but is designed to reduce variance in the algorithm's iterates, through steps that are more circumspect. 

We construct a family of adaptive sampling trust-region optimization derivative-free  (ASTRO-DF) algorithms for the SO context. In their most rudimentary form, ASTRO-DF algorithms follow a familiar idea for iteratively estimating the first and second order critical points of a function. Given a current random iterate $\BFX_{k}$ that approximates the first-order critical point of interest, ASTRO-DF constructs a tractable ``local" stochastic model using Monte Carlo observations of the objective function at carefully chosen points around $\BFX_{k}$. The constructed model is then optimized within the local region in which it is constructed to obtain a candidate solution $\BFXtilde_{k+1}$. Next, the objective function is \emph{observed} (using Monte Carlo) at $\BFXtilde_{k+1}$ and compared against the value \emph{predicted} by the model at $\BFXtilde_{k+1}$. If the observed decrease in function values from $\BFX_{k}$ to $\BFXtilde_{k+1}$ exceeds the decrease predicted by the constructed model in a certain stochastic sense, the candidate $\BFXtilde_{k+1}$ is accepted as the next iterate $\BFX_{k+1}$. As a vote of confidence on the constructed model, the trust-region radius is then expanded by a factor. Otherwise, that is, if the predicted decrease is much lower than the observed decrease (again, in a certain precise sense), the candidate $\BFXtilde_{k+1}$ is rejected, the trust-region radius is shrunk, and the local model is updated in an attempt to improve accuracy. This iterative process then repeats to produce a random sequence of iterates $\{\BFX_{k}\}$ that is realized in each run of ASTRO-DF. 

\begin{remark} Throughout this paper, we use the term ``sampling" to refer to the act of obtaining replicates using multiple runs of the Monte Carlo oracle at a fixed point. This is not to be confused with sampling design points in the search region. So, when we say that the sample size is $n$, we mean that $n$ amount of Monte Carlo effort was expended to obtain the function estimate \emph{at a fixed point}.\end{remark}

The above ideas for model construction, trust-region management, and candidate point acceptance say nothing about how much Monte Carlo effort to expend. Since all observations for function estimation and model construction are based on Monte Carlo, the resulting accuracy estimates are at best probabilistic, leading us to the question of how much to sample. Too little Monte Carlo effort threatens convergence due to accumulated stochastic and deterministic errors, and too much Monte Carlo sampling means reduced overall efficiency. Identifying the correct Monte Carlo sampling trade-off is more than a theoretical question, and answering it adequately entails more than broad prescriptions on sampling rates. To produce good implementations, sampling prescriptions ought to be automatic and specific to the problem at hand, which usually means relying on inference based on algorithm trajectory.    

To resolve the issue of how much to sample, we propose that a simple strategy called \emph{adaptive sampling} be incorporated within derivative-free trust-region algorithms in the SO context. Recognizing that the error in function and model estimation can be decomposed 
orthogonally into error due to sampling (or variance in the case of unbiased estimates) and error due to structure (or bias), adaptive sampling seeks to ensure that ``just adequate" Monte Carlo sampling is performed by balancing these errors. For example, when constructing a local model, Monte Carlo sampling in ASTRO-DF is adaptive in the sense that sampling continues until a certain continuously monitored metric of model quality exceeds a metric of sampling variability. A similar rule is employed when estimating the objective function at a point for purposes of candidate acceptance. We believe that such adaptive sampling paves the way for efficiency because it reacts to the observed algorithm trajectory and, as we shall see, keeps the different sources of error within the algorithm in lock-step. The resulting algorithm remains practical because of the simplicity of the proposed adaptive sampling rule --- sample until the estimated standard error falls below a certain specified power of the prevailing trust-region radius. 

\begin{remark} Adaptive sampling as an idea is not new and has been used with great success in other areas such as sequential confidence interval construction~\cite{chorob1965,ghomuksen97} and SO on finite spaces~\cite{shane3}.\end{remark}

Adaptive sampling, while invaluable as an implementation idea, introduces substantial complications when analyzing algorithm behavior. Akin to what happens during sequential sampling in the context of confidence interval construction~\cite{chorob1965,ghomuksen97}, the explicit dependence of the extent of Monte Carlo sampling on algorithm trajectory causes systematic early stopping and consequent bias in the function estimates obtained within ASTRO-DF. In other words, when using adaptive sampling, $\mathbb{E}[f(\BFx,n)] \neq f(\BFx)$ in general since the sample size $n$ is a stopping time~\cite[pp. 182]{durrett1} that will depend on $f(\BFx,n)$. Demonstrating that ASTRO-DF's iterates converge to a first-order critical point with probability one then entails demonstrating that the bias effects of adaptive sampling, especially when used within the derivative-free trust-region context, wear away asymptotically. We accomplish this by first generically characterizing a relationship between the moments of the adaptive sample size and the function estimates at stopping, and then showing that the errors induced due to model construction, algorithm recursion, and function estimation remain in lock-step (or comparable) throughout ASTRO-DF's evolution.   

We note that ASTRO-DF, as presented here, assumes that a stochastic linear or stochastic quadratic interpolation model is constructed during the model-construction step of the algorithm. While such models are reasonable and have seen wide use in the analogous deterministic context, other possibly more powerful model construction techniques such as regression or stochastic kriging~\cite{anknelsta2010} should be considered in place of interpolation models, especially alongside adaptive sampling. Ongoing research investigates this question and it is our belief that the proof techniques that we present in this paper will carry over, albeit with some changes.

\section{PRELIMINARIES}
\label{sec:pre}

In this section, we list notation, key definitions, and some basic results that will be used invoked throughout the rest of the document. 

\subsection{Notation and Convention}\label{sec:notcon}
We use bold font for vectors, script font for sets, lower case font for real numbers and upper case font for random variables. Hence $\{\BFX_{k}\}$ denotes a sequence of random vectors in $\real^{d}$, $\BFx=\left(x^1,x^2,\ldots,x^d\right)$ denotes a $d$-dimensional vector of real numbers, $\mcY:=\left\{\BFY_{1},\BFY_2,\ldots,\BFY_p\right\}$ denotes a set of $p$ real vectors, and $\mcY:=\left\{\BFY_{1},\BFY_2,\ldots,\BFY_p\right\}$ denotes a set of $p$ random vectors. The set $\mcB\left(\BFx;r\right)=\left\{\BFy\in\real^{d}:\ \left\| \BFy-\BFx\right\|\leq r \right\}$ is the closed ball of radius $r>0$ with center $\BFx$.

For a sequence of random vectors $\{\BFX_{k}\}$, 
%$\BFX_{k}\xrightarrow{D}\BFX$ denotes convergence in distribution, $\BFX_{k}\xrightarrow{p}\BFX$ denotes convergence in probability, and 
$\BFX_{k}\xrightarrow{wp1}\BFX$ denotes convergence with probability one or almost-sure convergence. For a sequence of real numbers $\{a_k\}$, we say $a_k=o\left(1\right)$ if $\lim_{k\rightarrow\infty}a_k=0$; we say $a_k=\mcO\left(1\right)$ if $\{a_k\}$ is bounded, that is, there exists a constant $M>0$ such that $\left|a_k\right|<M$ for large enough $k$. For sequences of real numbers $\{a_k\}$, $\{b_k\}$, we say that $a_k\sim b_k$ if $\lim_{k\rightarrow\infty} a_k/b_k=1$. For a sequence of random variables $\{\BFX_{k}\}$, we say $\BFX_{k} = \mcO_p(1)$ if $\{\BFX_{k}\}$ is stochastically bounded, that is, given $\epsilon > 0$ there exists $M(\epsilon) \in \real$ such that $\mbP\{\BFX_{k} \in (-M(\epsilon), M(\epsilon))\} \geq 1 - \epsilon$ for all $k \geq K(\epsilon) \in \mbN$. For a sequence of sets $\{A_n\}$ defined on a probability space, the set $\mbP\left\{A_n\, \mathrm{ i.o.}\right\} \defn \mbP\left\{\bigcap_{n=1}^{\infty} \bigcup_{m=n}^{\infty} A_m\right\}$ refers to the event that ``$A_n$ happens infinitely often."

\subsection{Key Definitions}
The following definitions will be invoked heavily during our exposition and analysis of ASTRO-DF. For further details on these definitions, consult~\cite{conschvic2009} and~\cite{ortrhe1970}.
\begin{definition}(Poised and $\Lambda$-Poised Sets)
\label{def:poised}
Given $\BFx\in \real^d$ and $\Delta>0$, let $\mcY=\left\{\BFY_i\in\mcB\left(\BFx;\Delta\right),\ i=1,2,\ldots,p\right\}$ be a finite set and $\Phi\left(\BFz\right) = \left(\phi^1\left(\BFz\right), \phi^2\left(\BFz\right), \ldots, \phi^q\left(\BFz\right)\right)$ be a polynomial basis on $\real^d$. Define 
\begin{equation}\label{int_matrix} P\left(\Phi,\mcY\right)=
\left[\begin{array}{cccc}
\phi^{1}\left(\BFY_{1}\right) & \phi^{2}\left(\BFY_{1}\right) & \ldots & \phi^{q}\left(\BFY_{1}\right)\\
\phi^{1}\left(\BFY_{2}\right) & \phi^{2}\left(\BFY_{2}\right) & \ldots & \phi^{q}\left(\BFY_{2}\right)\\
\vdots & \vdots & \vdots & \vdots\\
\phi^{1}\left(\BFY_{p}\right) & \phi^{2}\left(\BFY_{p}\right) & \ldots & \phi^{q}\left(\BFY_{p}\right)\end{array}\right].\end{equation} Then, $\mcY$ is said to be a ``poised set" in $\mcB\left(\BFx;\Delta\right)$ if the matrix $P\left(\Phi,\mcY\right)$ is nonsingular. A poised set $\mcY$ is said to be ``$\Lambda$-poised" in $\mcB\left(\BFx;\Delta\right)$ if \[\Lambda\geq \max_{j=1,\ldots,p}\max_{\BFz\in\mcB\left(\BFx;\Delta\right)}\left|\ell_{j}\left(\BFz\right)\right|,\] where $\ell_{j}\left(\BFz\right)$ are the Lagrange polynomials associated with $\mcY$. 
\end{definition}

%\begin{remark}\label{rem:lagrange} Given the matrix $P\left(\Phi,\mcY\right)$, %the value of the Lagrange polynomials can be calculated by
%\[\left|\ell_{i}\left(\BFz\right)\right|=\frac{\left|\mathrm{det}\left(P\left(\Phi,%\mcY_{i}\left(\BFz\right)\right)\right)\right|}{\left|\mathrm{det}\left(P%\left(\Phi,\mcY\right)\right)\right|}=\frac{\mathrm{vol}\left(\Phi\left(\mathcal%{Y}_{i}\left(\BFz\right)\right)\right)}{\mathrm{vol}\left(\Phi\left(\mcY%\right)\right)},\] where $\mcY_{i}\left(\BFz\right)=\mcY\backslash%\left\{ \BFY_{i}\right\} \cup\BFz$. In other words the absolute value of %the $i^{th}$ Lagrange polynomial at a given point $\BFz$ is the change in the %volume of the simplex of vertices in $\Phi\left(\mcY\right)$ when $\BFz$ %replaces $\BFY_i$ ~\cite[p. 41]{conschvic2009}.
%\end{remark}

\begin{definition}(Polynomial Interpolation Models)
\label{def:interpolation}
Let $f:\real^d \subseteq\real^d\rightarrow\real$ be a real-valued function and let $\mcY$ and $\Phi$ be as defined in Definition \ref{def:poised} with $p=q$. Suppose we can find $\BFalpha=\left(\alpha^1,\alpha^2,\ldots,\alpha^p\right)$ such that \begin{equation} \label{alpha} P\left(\Phi,\mcY\right)\BFalpha=\left(f\left(\BFY_{1}\right),\ldots,f\left(\BFY_{p}\right)\right)^{T}.\end{equation} (Such an $\BFalpha$ is guaranteed to exist if $\mcY$ is poised.) Then the function $m(\BFz):\mcB\left(\BFx;\Delta\right)\rightarrow\real$ given by \begin{equation}\label{interpolate} m(\BFz)=\sum_{j=1}^{p}\alpha^j \phi^{j}\left(\BFz\right)\end{equation} is said to be a polynomial interpolation model of $f$ on $\mcB\left(\BFx;\Delta\right)$. As a special case, $m\left(\BFz\right)$ is said to be a linear interpolation model of $f$ on $\mcB\left(\BFx;\Delta\right)$ if $\Phi(\BFz) \defn (\phi^1,\phi^2,\ldots,\phi^p) = (1,z^1,z^2,\ldots,z^d)$, and a quadratic interpolation model of $f$ on $\mcB\left(\BFx;\Delta\right)$ if $\Phi(\BFz) \defn (\phi^1,\phi^2,\ldots,\phi^p) = \left(1,z^1,z^2,\ldots,z^d,\frac{1}{2}(z^1)^2,z^1 z^2,\ldots,\frac{1}{2}(z^2)^2,\ldots,\frac{1}{2}(z_d)^2\right)$.
\end{definition}

\begin{definition}(Stochastic Interpolation Models)
\label{def:stoch-linear-interpolation}
%Recall that $\mcY=\left\{\BFY_i\in\mcB\left(\BFx;\Delta\right),\ i=1,2,\ldots,p\right\}$. 
A model constructed as in Definition \ref{def:interpolation} but with sampled function estimates is called a stochastic interpolation model. Specifically, analogous to (\ref{alpha}), suppose $\hat{\BFalpha}=\left(\alphahat^1,\alphahat^2,\ldots,\alphahat^p\right)$ is such that $$P\left(\Phi,\mcY\right)\hat{\BFalpha}=\left(\Fbar\left(\BFY_{1},n\left(\BFY_{1}\right)\right),\Fbar\left(\BFY_{2},n\left(\BFY_{2}\right)\right),\ldots,\Fbar\left(\BFY_{p},n\left(\BFY_{p}\right)\right)\right)^{T},$$ Then the stochastic function $M(\BFz):\mcB\left(\BFx;\Delta\right)\rightarrow\real$ given as $M(\BFz)=\sum_{j=1}^{p}\alphahat^j \phi^{j}\left(\BFz\right)$ is said to be a stochastic polynomial interpolation model of $f$ on $\mcB\left(\BFx;\Delta\right)$, where $\Phi\left(\BFz\right) = \left(\phi^1\left(\BFz\right), \phi^2\left(\BFz\right), \ldots, \phi^q\left(\BFz\right)\right)$ and $P\left(\Phi,\mcY\right)$ are as in Definition \ref{def:interpolation}.   

\end{definition}

\begin{definition}(Fully-linear and Fully-quadratic Models)
\label{def:flfq}
Given $\BFx\in\real^d$, $m\left(\BFz\right):\mcB\left(\BFx;\Delta\right)\rightarrow\real$, $m\in\mathcal{C}^1$ is said to be a $\left(\kappa_{ef},\kappa_{eg}\right)$-fully-linear model of $f$ on $\mcB\left(\BFx;\Delta\right)$ if it has a Lipschitz continuous gradient with Lipschitz constant $\nu_{gL}^{m}$, and there exist constants $\kappa_{ef}, \kappa_{eg}$ (not dependent on $\BFz$ and $\Delta$) such that 
\begin{equation}
\begin{aligned}\left|f\left(\BFz\right)-m\left(\BFz\right)\right| & \leq\kappa_{ef}\Delta^{2};\\
\left\| \nabla f\left(\BFz\right)-\nabla m\left(\BFz\right)\right\|  & \leq\kappa_{eg}\Delta.
\end{aligned}
\label{eq:FL}
\end{equation} Similarly $m(\BFz):\mcB\left(\BFx;\Delta\right)\rightarrow\real$, $m\in\mathcal{C}^2$ is said to be a $\left(\kappa_{ef},\kappa_{eg},\kappa_{eh}\right)$-fully-quadratic model of $f$ on $\mcB\left(\BFx;\Delta\right)$ if it has a Lipschitz continuous second derivative with Lipschitz constant $\nu_{hL}^{m}$, and constants $\kappa_{ef}$, $\kappa_{eg}, \kappa_{eh}$ (not dependent on $\BFz$, $\Delta$) such that
\begin{equation}
\begin{aligned}\left|f\left(\BFz\right)-m\left(\BFz\right)\right| & \leq\kappa_{ef}\Delta^{3};\\
\left\| \nabla f\left(\BFz\right)-\nabla m\left(\BFz\right)\right\|  & \leq\kappa_{eg}\Delta^{2};\\
\left\| \nabla^2 f\left(\BFz\right)-\nabla^2 m\left(\BFz\right)\right\|  & \leq\kappa_{eH}\Delta.
\end{aligned}
\label{eq:FQ}
\end{equation}
\end{definition} A linear interpolation model constructed using a poised set $\mcY$ can be shown to be $\left(\kappa_{ef},\kappa_{eg}\right)$-fully-linear; likewise, a quadratic interpolation model constructed using a poised set $\mcY$ can be shown to be $\left(\kappa_{ef},\kappa_{eg},\kappa_{eh}\right)$-fully-quadratic.

\begin{definition}(Cauchy Reduction)
\label{def:cauchy}
Step $\BFs$ is said to achieve $\kappa_{fcd}$ fraction of Cauchy reduction for $m\left(\cdot\right)$ on $\mcB\left(\BFx;\Delta\right)$ with some $\Delta>0$, if
\begin{equation}
m\left(\BFx\right)-m\left(\BFx+\BFs\right)\geq\frac{\kappa_{fcd}}{2}\left\| \nabla m\left(\BFx\right)\right\| \min\left\{ \frac{\left\| \nabla m\left(\BFx\right)\right\| }{\left\| \nabla^2 m\left(\BFx\right)\right\| },\Delta\right\},\label{eq:cauchy}
\end{equation}
where $\nabla m\left(\BFx\right)$ and $\nabla^2 m\left(\BFx\right)$ are the model gradient and the model Hessian at point $\BFx$. We assume $\left\| \nabla m\left(\BFx\right)\right\| / \left\| \nabla^2 m\left(\BFx\right)\right\|=+\infty$ when $\nabla^2 m\left(\BFx\right)=\mathbf{0}$. A Cauchy step with $\kappa_{fcd}=1$ is obtained by minimizing the model $m(\cdot)$ along the steepest descent direction within $\mcB\left(\BFx;\Delta\right)$~\cite[p. 175]{conschvic2009}. Accordingly, the Cauchy step is especially easy to obtain when $m(\cdot)$ is linear or quadratic.
\end{definition}

\subsection{Useful Results}

We will now state some basic results that will be used at various points in the paper. The first of these (Theorem \ref{thm:chow-robbins}) is a seminal result that is routinely used in sequential sampling contexts, especially when constructing confidence intervals. The second result (Theorem \ref{thm:meanasylim}) is a variation of Lemma 2 and Theorem 1 in~\cite{ghomuk1979} which, again, originally appeared in the context of sequential confidence intervals. As we shall see, we will use Theorem \ref{thm:meanasylim}  extensively in our analysis, to analyze the behavior of estimators that are constructed using sequential sampling. The postulates and the setting of Theorem \ref{thm:meanasylim} differ only a little from the setting of Theorem 1 in~\cite{ghomuk1979}; for this reason, we have chosen not to include a proof.  

\begin{theorem}[Chow and Robbins, 1965] \label{thm:chow-robbins} 
Suppose random variables $X_i, i =1,2,\ldots$ are iid with variance $\sigma^2<\infty$, $\bar{X}_n=n^{-1}\sum_{i=1}^{n}X_i$, $\sigmahat_{n}^{2}=n^{-1}\sum_{i=1}^{n}\left(X_i-\bar{X}_n\right)^{2}$, and $\{a_n\}$ a sequence of positive constants such that $a_n \rightarrow a$ as $n \rightarrow \infty$. If \[N(d)=\inf\left\{ n \geq 1:\frac{\sigmahat_{n}}{\sqrt{n}}\leq\frac{d}{a_{n}}\right\}, \] then $d^2N(d)/\left(a^2\sigma^2\right)\xrightarrow{wp1}1$ and $\sigmahat_{N}/\sigma\xrightarrow{wp1}1$ as $d\rightarrow 0$.
\end{theorem}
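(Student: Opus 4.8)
The plan is to reduce the statement to the strong law of large numbers together with the elementary two-sided bounds that the definition of $N(d)$ supplies at the stopping index and at the index just before it. Write $\sigmahat_n^2 = n^{-1}\sum_{i=1}^n X_i^2 - \bar X_n^2$. Since $\sigma^2<\infty$ forces $\mathbb E[X_1^2]<\infty$, the SLLN applied to $\{X_i\}$ and to $\{X_i^2\}$ gives $\sigmahat_n \xrightarrow{wp1} \sigma$, hence $\sigmahat_n/\sqrt n \xrightarrow{wp1} 0$. Because $a_n\to a\in(0,\infty)$ and $d>0$, the threshold $d/a_n$ is eventually bounded below by $d/(2a)>0$, so the set $\{n:\sigmahat_n/\sqrt n\le d/a_n\}$ is w.p.1 nonempty and $N(d)<\infty$ w.p.1. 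Moreover $d\mapsto N(d)$ is non-increasing (shrinking $d$ only tightens the criterion), so $N(d)$ is non-decreasing as $d\downarrow 0$; and on the w.p.1 event $\{\sigmahat_n\to\sigma\}$, taking $\sigma>0$ and the standard convention that the infimum runs over indices past a pilot sample (so that $\sigmahat_n>0$, i.e.\ the criterion is not vacuously met at $n=1$), for each fixed $n$ the criterion fails once $d$ is small; hence $N(d)\xrightarrow{wp1}\infty$ as $d\to0$. In particular $N(d)\ge 2$ w.p.1 for all $d$ small enough, so both the stopping inequality at $N(d)$ and the ``first failure'' inequality at $N(d)-1$ are available.

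Next I would push the a.s.\ limits through the random, $d$-dependent index. On the probability-one event on which $\sigmahat_n\to\sigma$, $a_n\to a$, and $N(d)\to\infty$ as $d\to0$, composition yields $\sigmahat_{N(d)}\to\sigma$, $\sigmahat_{N(d)-1}\to\sigma$, $a_{N(d)}\to a$, $a_{N(d)-1}\to a$, and $N(d)/(N(d)-1)\to1$. Since $N$ is integer-valued and monotone in $d$, it suffices to verify these along any sequence $d_k\downarrow0$, so no continuum subtlety arises. The limit $\sigmahat_{N(d)}\to\sigma$ is already the second assertion, $\sigmahat_{N}/\sigma\xrightarrow{wp1}1$.

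For the first assertion I would sandwich $d^2N(d)$ using the definition of $N=N(d)$. The stopping inequality $\sigmahat_N/\sqrt N\le d/a_N$ rearranges to $d^2N\ge a_N^2\sigmahat_N^2$, hence $d^2N/(a^2\sigma^2)\ge a_N^2\sigmahat_N^2/(a^2\sigma^2)$. The failure of the criterion at $N-1$ (valid w.p.1 for small $d$ by the first paragraph) gives $d^2(N-1)<a_{N-1}^2\sigmahat_{N-1}^2$, hence $d^2N/(a^2\sigma^2)< \bigl(N/(N-1)\bigr)\,a_{N-1}^2\sigmahat_{N-1}^2/(a^2\sigma^2)$. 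Letting $d\to0$ and using the limits from the previous paragraph, both the lower and the upper bound converge w.p.1 to $1$, so $d^2N(d)/(a^2\sigma^2)\xrightarrow{wp1}1$.

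The only genuinely delicate point --- and the one I would write most carefully --- is the preliminary that $N(d)\xrightarrow{wp1}\infty$ as $d\to0$: this is what licenses evaluating the a.s.-convergent sequences $\{\sigmahat_n\}$ and $\{a_n\}$ at the random index $N(d)$, and it in turn relies on $\sigma>0$ (non-degenerate $X_1$) and on the stopping rule not being trivially satisfied at the smallest admissible index. Once that is in place, the rest is routine rearrangement of the defining inequalities of the sequential rule.
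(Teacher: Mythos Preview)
The paper does not prove this theorem; it simply quotes it as a classical result of Chow and Robbins (1965) and cites the original paper. Your argument is the standard and correct one: SLLN gives $\sigmahat_n\to\sigma$ and hence $N(d)<\infty$ w.p.1; monotonicity in $d$ together with $\sigma>0$ forces $N(d)\to\infty$; and then the two defining inequalities at $N$ and $N-1$ sandwich $d^2N/(a^2\sigma^2)$ between quantities that both tend to $1$. You are also right to flag the one genuine subtlety in the statement as written: with the infimum taken over $n\ge1$ and $\sigmahat_1=0$, the stopping criterion is vacuously met at $n=1$, so the result as literally stated is false without the usual pilot-sample convention (or, equivalently, without replacing $\sigmahat_n^2$ by something like $\sigmahat_n^2+n^{-1}$, as Chow and Robbins themselves do). Since the paper offers no proof to compare against, there is nothing further to contrast.
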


\begin{theorem}\label{thm:meanasylim} 
Suppose random variables $X_i, i =1,2,\ldots$ are iid with $\mathbb{E}[X_1] = 0, \mathbb{E}[X_1^2] = \sigma^2 >0,$ and $\mathbb{E}[|X_1|^{4v}] < \infty$ for some $v \geq 2$. Let $\sigmahat_{n}^{2}=n^{-1}\sum_{i=1}^{n}\left(X_i-\bar{X}_n\right)^{2}$, where $\bar{X}_n=n^{-1}\sum_{i=1}^{n}X_i$. If \[N(\lambda)=\inf\left\{ n \geq \lambda^{\gamma}:\frac{\sigmahat_n}{\sqrt{n}}\leq\frac{\kappa}{\sqrt{\lambda}}\right\}, \gamma \in (0,1] \] then the following hold. \begin{enumerate}\item[(i)] As $\lambda \to \infty$, $$\mbP\{N(\lambda) < \infty\} = 1 \mbox{ and } N(\lambda) \xrightarrow{wp1} \infty.$$ \item[(ii)] As $\lambda \to \infty$ and for every $s < v$, $$\mathbb{E}[N^s(\lambda)] \sim \sigma^{2s}\kappa^{-2s}\lambda^s.$$ \item[(iii)] For every $\epsilon \in (0,1)$, $$\mbP\{N(\lambda) \leq \sigma^2\kappa^{-2}\lambda (1-\epsilon)\} = \beta\lambda^{-(v-1)\gamma}.$$ where, 
as~\cite{ghomuk1979} notes, $\beta$ is a generic constant that might depend only on $v$ and the moments of $X_1$ but not on $\lambda$. \item[(iv)] As $\lambda \to \infty$, $$\mathbb{E}[\bar{X}_{N(\lambda)}^2(\lambda)] \sim \kappa^{2}\lambda^{-1}.$$ \end{enumerate}
\end{theorem}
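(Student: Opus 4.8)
My plan is to establish the four assertions in order, leaning on three common tools: (a) the strong law $\sigmahat_n \xrightarrow{wp1}\sigma$ for the sample standard deviation; (b) an almost-sure limit $N(\lambda)/\lambda\xrightarrow{wp1}\sigma^2\kappa^{-2}$ obtained by a Chow--Robbins-type sandwich; and (c) the polynomial moment bound $\mathbb{E}|\sigmahat_n^2-\sigma^2|^{2v}=O(n^{-v})$, which holds precisely because $\mathbb{E}|X_1|^{4v}<\infty$: write $\sigmahat_n^2-\sigma^2 = n^{-1}\sum_{i\le n}(X_i^2-\sigma^2)-\bar X_n^2$ and apply a Marcinkiewicz--Zygmund/Rosenthal inequality to each piece (the first term contributes $O(n^{-v})$ since $X_1^2-\sigma^2$ is centered with finite $2v$-th moment, the second is even smaller).

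\emph{Parts (i) and (ii).} For finiteness, fix $\lambda$: since $\sigmahat_n/\sqrt n\xrightarrow{wp1}0$ while $\kappa/\sqrt\lambda>0$ is fixed, the defining inequality is eventually satisfied for $n\ge\lambda^\gamma$, so $\mbP\{N(\lambda)<\infty\}=1$; and $N(\lambda)\ge\lambda^\gamma\to\infty$ gives the divergence. For (ii), $\sigmahat_n^2\to\sigma^2$ implies that for any $\delta>0$ and all large $\lambda$ there is no stopping before $n=(\sigma^2-\delta)\kappa^{-2}\lambda$ and guaranteed stopping by $n=(\sigma^2+\delta)\kappa^{-2}\lambda$, whence $N(\lambda)/\lambda\xrightarrow{wp1}\sigma^2\kappa^{-2}$ (the analogue of Theorem~\ref{thm:chow-robbins}). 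To pass to convergence of the $s$-th moments for $s<v$ it suffices to show $\{(N(\lambda)/\lambda)^{s'}\}_\lambda$ is bounded in $L^1$ for some $s'\in(s,v)$: using $\{N(\lambda)>n\}\subseteq\{\sigmahat_n^2>\kappa^2 n/\lambda\}$, the bound (c) gives $\mbP\{N(\lambda)>n\}=O(n^{-v})$ once $n$ exceeds a fixed multiple of $\lambda$, and integrating $n^{s'-1}\mbP\{N(\lambda)>n\}$ (convergent because $s'<v$) yields $\mathbb{E}[N^{s'}(\lambda)]=O(\lambda^{s'})$; uniform integrability then upgrades the a.s.\ limit to $\mathbb{E}[N^s(\lambda)]\sim\sigma^{2s}\kappa^{-2s}\lambda^s$.

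\emph{Part (iii).} Set $m=\sigma^2\kappa^{-2}\lambda(1-\epsilon)$. If $N(\lambda)\le m$ then $\sigmahat_n^2\le\kappa^2 n/\lambda\le\sigma^2(1-\epsilon)$ for some integer $n\ge\lambda^\gamma$, i.e.\ $|\sigmahat_n^2-\sigma^2|\ge\sigma^2\epsilon$ for some $n\ge\lambda^\gamma$. A union bound and Markov's inequality applied with the $2v$-th moment of $\sigmahat_n^2-\sigma^2$ give $\mbP\{N(\lambda)\le m\}\le\sum_{n\ge\lambda^\gamma}(\sigma^2\epsilon)^{-2v}\,\mathbb{E}|\sigmahat_n^2-\sigma^2|^{2v}=\sum_{n\ge\lambda^\gamma}O(n^{-v})=O(\lambda^{-(v-1)\gamma})$, where $v\ge2$ makes the series summable; this is the claimed $\beta\lambda^{-(v-1)\gamma}$.

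\emph{Part (iv) --- the main obstacle.} Since $N(\lambda)\xrightarrow{wp1}\infty$ with $N(\lambda)/\lambda\xrightarrow{wp1}\sigma^2\kappa^{-2}$, Anscombe's random-index central limit theorem gives $\sqrt{N(\lambda)}\,\bar X_{N(\lambda)}\Rightarrow\mathcal N(0,\sigma^2)$, hence $\lambda\bar X_{N(\lambda)}^2=(\lambda/N(\lambda))\,N(\lambda)\bar X_{N(\lambda)}^2\Rightarrow\kappa^2\chi_1^2$, a law with mean $\kappa^2$. The entire content of (iv) is therefore the uniform integrability of $\{\lambda\bar X_{N(\lambda)}^2\}_\lambda$, and this is where $v\ge2$ and $\mathbb{E}|X_1|^{4v}<\infty$ are essential. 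I would partition according to whether $N(\lambda)$ lies in a window $[(1-\epsilon)\sigma^2\kappa^{-2}\lambda,\,C\lambda]$ about its typical value: on that window $\lambda/N(\lambda)$ is bounded and $\lambda\bar X_{N(\lambda)}^2$ is dominated by $C'(\lambda)^{-1}\max_{n\le C\lambda}S_n^2$ with $S_n:=\sum_{i\le n}X_i$, which has a bounded $v$-th moment by Doob's maximal inequality and a Rosenthal bound (both valid because $\mathbb{E}|X_1|^{2v}\le(\mathbb{E}|X_1|^{4v})^{1/2}<\infty$), and is uniformly integrable since $v>1$; outside the window, the contribution to $\mathbb{E}[\lambda\bar X_{N(\lambda)}^2]$ is shown to be negligible by combining the tail bound of part~(iii) (together with a corresponding bound for $\{N(\lambda)>C\lambda\}$ from the same moment estimates) with Hölder's inequality against Wald- and Rosenthal-type moment estimates for the stopped sum $S_{N(\lambda)}$. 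The delicate point --- the principal obstacle --- is that these moment bounds and small-probability bounds must be balanced just so, uniformly over \emph{all} $\gamma\in(0,1]$; this is exactly the estimate carried out in the proof of Theorem~1 of~\cite{ghomuk1979}, which is why, as in that paper, (iv) is reduced to their argument. Granting the uniform integrability, $\mathbb{E}[\lambda\bar X_{N(\lambda)}^2]\to\kappa^2$, i.e.\ $\mathbb{E}[\bar X_{N(\lambda)}^2(\lambda)]\sim\kappa^2\lambda^{-1}$.
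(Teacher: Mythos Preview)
The paper itself does not prove this theorem: it states explicitly that ``the postulates and the setting of Theorem~\ref{thm:meanasylim} differ only a little from the setting of Theorem~1 in~\cite{ghomuk1979}; for this reason, we have chosen not to include a proof.'' Your proposal is therefore being compared not against an in-paper argument but against the Ghosh--Mukhopadhyay approach that the paper defers to, and on that score your sketch is faithful to it: the Chow--Robbins sandwich for the almost-sure limit $N(\lambda)/\lambda\to\sigma^2\kappa^{-2}$, the Rosenthal/Marcinkiewicz--Zygmund moment bound $\mathbb{E}|\sigmahat_n^2-\sigma^2|^{2v}=O(n^{-v})$ (which, as you correctly note, is precisely where the assumption $\mathbb{E}|X_1|^{4v}<\infty$ enters), the union bound over $n\ge\lambda^\gamma$ for part~(iii), and Anscombe plus uniform integrability for part~(iv) are exactly the ingredients of~\cite{ghomuk1979}. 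Your identification of the ``principal obstacle'' in (iv) --- balancing the small-probability bound of (iii) against moment bounds for the stopped sum on the low-$N$ regime, uniformly in $\gamma$ --- is accurate and is indeed the heart of Theorem~1 there. One small remark: in (iii) the display in the statement reads as an equality, but your argument (correctly) yields only an $O(\lambda^{-(v-1)\gamma})$ upper bound, which is all that is needed downstream and is what~\cite{ghomuk1979} actually proves.
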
 

We next state Lemma \ref{lem:model_error} which characterizes the error in the stochastic interpolation model introduced in Definition \ref{def:stoch-linear-interpolation}. Lemma \ref{lem:model_error} is essentially a stochastic variant of a result that appears in Chapter 3 of~\cite{conschvic2009}. We provide a sketch of the proof of Lemma \ref{lem:model_error} in the Appendix.

\begin{lemma} \label{lem:model_error} Let $\mcY=\left\{ \BFY_{1},\BFY_{2},\ldots,\BFY_{p}\right\} $ be a $\Lambda$-poised set on $\mcB\left( \BFY_{1};\Delta\right)$. Let $m\left(\BFz\right)$ be an interpolation model of $f$ on $\mcB\left( \BFY_{1};\Delta\right)$. Let $M\left(\BFz\right)$ be the corresponding stochastic interpolation model of $f$ on $\mcB\left(\BFY_{1};\Delta\right)$ constructed using observations $\Fbar\left(\BFY_{i},n(\BFY_{i})\right)=f\left(\BFY_{i}\right)+E_{i}$ for $i=1,2,\ldots,p$. 

\begin{enumerate} \item[(i)] For all $\BFz\in\mcB\left( \BFY_{1};\Delta\right)$, 
\[
\left|M\left(\BFz\right)-m\left(\BFz\right)\right|\leq p\Lambda \max_{i=1,2,\ldots,p}\left|\Fbar\left(\BFY_{i},n(\BFY_{i})\right)-f\left(\BFY_{i}\right)\right|.
\] \item[(ii)] If $M\left(\BFz\right)$ is a stochastic linear interpolation model of $f$ on $\mcB\left( \BFY_{1};\Delta\right)$, then there exist positive constants $\kappa_{egL1}, \kappa_{egL2}$ such that for $\BFz\in\mcB\left( \BFY_{1};\Delta\right),$
$$
\left\| \nabla M\left(\BFz\right)-\nabla f\left(\BFz\right)\right\|   \leq\kappa_{egL1}\Delta+\kappa_{egL2}\frac{\sqrt{\sum_{i=2}^{d+1}\left(E_{i}-E_{1}\right)^{2}}}{\Delta}.$$

If $M\left(\BFz\right)$ is a stochastic quadratic interpolation model of $f$ on $\mcB\left( \BFY_{1};\Delta\right)$, then there exist positive constants $\kappa_{egQ1}, \kappa_{egQ2}$ such that
$$\left\| \nabla M\left(\BFz\right)-\nabla f\left(\BFz\right)\right\| \leq\kappa_{egQ1}\Delta^{2}+\kappa_{egQ2}\frac{\sqrt{\sum_{i=2}^{(d+1)(d+2)/2}\left(E_{i}-E_{1}\right)^{2}}}{\Delta}.$$ 
\end{enumerate}

\end{lemma}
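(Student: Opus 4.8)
The plan is to reduce everything to the deterministic interpolation error bounds of Conn--Scheinberg--Vicente (Chapter 3 of \cite{conschvic2009}) and then track how the sampling errors $E_i$ propagate through the linear-algebraic machinery of the interpolation system. Throughout, write $\BFalpha$ for the coefficients of the (deterministic) interpolation model $m$ built from the exact values $f(\BFY_i)$, and $\BFalphahat$ for the coefficients of the stochastic model $M$ built from $\Fbar(\BFY_i,n(\BFY_i)) = f(\BFY_i)+E_i$. Since both satisfy the same linear system with the same matrix $P(\Phi,\mcY)$, subtracting gives $P(\Phi,\mcY)(\BFalphahat - \BFalpha) = \BFe$, where $\BFe = (E_1,\ldots,E_p)^T$. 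The difference $M(\BFz)-m(\BFz) = \sum_{j=1}^p (\alphahat^j - \alpha^j)\phi^j(\BFz) = \Phi(\BFz)^T(\BFalphahat - \BFalpha) = \Phi(\BFz)^T P(\Phi,\mcY)^{-1}\BFe$ is then a pure function of the sampling errors.

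For part (i), I would observe that $\Phi(\BFz)^T P(\Phi,\mcY)^{-1}$ is exactly the vector of Lagrange polynomials $(\ell_1(\BFz),\ldots,\ell_p(\BFz))$ associated with $\mcY$ evaluated at $\BFz$; this is the standard characterization of Lagrange polynomials as $\ell_i(\BFz) = \Phi(\BFz)^T P(\Phi,\mcY)^{-1}\BFe_i$. Hence $M(\BFz)-m(\BFz) = \sum_{i=1}^p \ell_i(\BFz) E_i$, and the bound $|M(\BFz)-m(\BFz)| \le \sum_{i=1}^p |\ell_i(\BFz)||E_i| \le p \Lambda \max_i |E_i|$ follows immediately from the $\Lambda$-poisedness of $\mcY$ on $\mcB(\BFY_1;\Delta)$, since $|\ell_i(\BFz)| \le \Lambda$ for all $\BFz \in \mcB(\BFY_1;\Delta)$ and $|E_i| = |\Fbar(\BFY_i,n(\BFY_i)) - f(\BFY_i)|$.

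For part (ii), I would invoke the triangle inequality $\|\nabla M(\BFz) - \nabla f(\BFz)\| \le \|\nabla M(\BFz) - \nabla m(\BFz)\| + \|\nabla m(\BFz) - \nabla f(\BFz)\|$. The second term is bounded by $\kappa_{eg}\Delta$ (linear case) or $\kappa_{eg}\Delta^2$ (quadratic case) because $m$ is a fully-linear (resp.\ fully-quadratic) model of $f$ on $\mcB(\BFY_1;\Delta)$ by the deterministic interpolation theory, giving the $\kappa_{egL1}\Delta$ and $\kappa_{egQ1}\Delta^2$ terms. The first term is the sampling contribution: $\nabla M(\BFz) - \nabla m(\BFz)$ depends only on the error-contaminated coefficients, and in the linear case the gradient is simply the subvector $(\alphahat^2-\alpha^2,\ldots,\alphahat^{d+1}-\alpha^{d+1})$, which solves the $d\times d$ reduced ("shifted and scaled") interpolation system whose right-hand side involves the differences $E_i - E_1$ (after translating so $\BFY_1$ is the origin and scaling by $\Delta$). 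The conditioning of this scaled system is controlled by the poisedness constant $\Lambda$ and is independent of $\Delta$, so the norm of the gradient difference is bounded by a constant times $\sqrt{\sum_{i=2}^{d+1}(E_i-E_1)^2}\,/\,\Delta$, the $\Delta^{-1}$ arising from undoing the scaling of the (linear) design points by $\Delta$. The quadratic case is identical in spirit, working with the $((d+1)(d+2)/2 - 1)$-dimensional reduced system and picking out the gradient block of the coefficient vector; the scaling of the first-order monomials by $\Delta$ again produces the $\Delta^{-1}$ factor.

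The main obstacle is being careful about the scaling argument that produces the $1/\Delta$ factor: one must pass from $P(\Phi,\mcY)$ to its shifted-and-scaled counterpart $\hat P$ built on the points $(\BFY_i - \BFY_1)/\Delta \in \mcB(\BFzero;1)$, show that $\|\hat P^{-1}\|$ is bounded by a constant depending only on $\Lambda$ (and $d$) but not on $\Delta$ — this is precisely where $\Lambda$-poisedness does its work, via the equivalence between poisedness constants and bounds on $\|\hat P^{-1}\|$ established in \cite{conschvic2009} — and then track how the diagonal scaling matrix $\mathrm{diag}(1,\Delta,\ldots,\Delta)$ (linear) or $\mathrm{diag}(1,\Delta,\ldots,\Delta,\Delta^2,\ldots,\Delta^2)$ (quadratic) relating $P$ and $\hat P$ transfers onto the coefficient vector, so that the gradient block of $\BFalphahat - \BFalpha$ picks up exactly one factor of $\Delta^{-1}$. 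Everything else is routine linear algebra plus the already-cited deterministic fully-linear/fully-quadratic bounds, which is why the paper relegates the full argument to a sketch in the appendix.
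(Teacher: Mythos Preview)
Your approach is correct and substantively the same as the paper's. Part (i) is identical: both arguments write $M(\BFz)-m(\BFz)=\sum_i \ell_i(\BFz)E_i$ via the Lagrange-polynomial representation and bound by $p\Lambda\max_i|E_i|$ using $\Lambda$-poisedness. For part (ii) the organization differs slightly. The paper does \emph{not} split through the intermediate $m$: in the linear case it writes $(\BFY_i-\BFY_1)^T\nabla M(\BFY_1)=f(\BFY_i)-f(\BFY_1)+E_i-E_1$ and then re-traces the Conn--Scheinberg--Vicente error analysis directly, carrying the additive $E_i-E_1$ term through (so the Taylor remainder and the stochastic error sit together on the right-hand side of the scaled system, and the $\kappa_{egL1}\Delta$ and $\kappa_{egL2}(\cdot)/\Delta$ terms fall out in a single pass); the quadratic case is handled the same way. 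Your triangle-inequality decomposition $\|\nabla M-\nabla f\|\le\|\nabla M-\nabla m\|+\|\nabla m-\nabla f\|$ is an equally valid and somewhat more modular packaging: it lets you invoke the deterministic fully-linear/fully-quadratic bound as a black box and confine the linear algebra to the pure sampling term $\nabla M-\nabla m$. One small imprecision to fix: in the quadratic case $\nabla M(\BFz)-\nabla m(\BFz)=(\hat g-g)+(\hat H-H)(\BFz-\BFY_1)$, so the Hessian block of $\BFalphahat-\BFalpha$ also contributes, not just the gradient block; its scaling is $\Delta^{-2}$, and after multiplication by $\|\BFz-\BFY_1\|\le\Delta$ it enters at the same $\Delta^{-1}$ order as the gradient block, so your final bound is unaffected.
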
 

We end this section by stating two basic results that we repeatedly invoke, and which can be found in most standard treatments of probability such as~\cite{bil95}. The first of these is used to upper bound the probability of the union of events; the second provides sufficient conditions to ensure that an infinite number of a collection $\{A_n\}$ of events happening is zero. 

%%%%% ADDED 2/2/16
\begin{lemma}[Boole's Inequality]
\label{lem:boole}
Let $A_1,A_2,\cdots$ be a countable set of events defined on a probability space. Then $\mbP\left(\bigcup_{i}A_{i}\right)\leq\sum_{i}\mbP\left(A_{i}\right).$ Particularly, we see that if the random variables $X, X_i,\ i=1,2,\cdots,q$ satisfy $X\leq X_1+X_2+\cdots+X_q$, then 
\[\begin{aligned}\ \ \ \ \ \ \left(X>c\right) & \subseteq\left(X_{1}+X_{2}+\cdots X_{q}>c\right)\\
 & \subseteq\left(X_{1}>\frac{c}{q}\right)\cup\left(X_{2}>\frac{c}{q}\right)\cup\cdots\cup\left(X_{q}>\frac{c}{q}\right),\end{aligned}\] implying (from Boole's inequality) that 
$$\mbP\left\{ X>c\right\}  \leq\mbP\left\{ \bigcup_{i=1}^{q}\left(X_{i}>\frac{c}{q}\right)\right\} \leq\sum_{i=1}^{q}\mbP\left\{ X_{i}>\frac{c}{q}\right\}.$$
\end{lemma}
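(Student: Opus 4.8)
The plan is to prove the two assertions in turn, both by entirely standard arguments; this is a textbook result (see~\cite{bil95}), and the only ``trick'' involved is the usual disjointification of a union.

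First I would establish Boole's inequality itself. Given the countable family $A_1, A_2, \ldots$, define the disjointified sets $B_1 \defn A_1$ and $B_n \defn A_n \setminus \bigcup_{i=1}^{n-1} A_i$ for $n \geq 2$. Three observations finish it: the $B_n$ are pairwise disjoint; $\bigcup_n B_n = \bigcup_n A_n$; and $B_n \subseteq A_n$ for every $n$, so $\mbP(B_n) \leq \mbP(A_n)$ by monotonicity of $\mbP$. Countable additivity then gives $\mbP(\bigcup_i A_i) = \mbP(\bigcup_n B_n) = \sum_n \mbP(B_n) \leq \sum_n \mbP(A_n)$. (In the finite case the same argument works with finite additivity, so no separate treatment is needed.)

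Next I would verify the two set inclusions in the displayed chain. For the first, suppose $\omega$ is such that $X(\omega) > c$; since $X \leq X_1 + \cdots + X_q$ pointwise, we get $X_1(\omega) + \cdots + X_q(\omega) \geq X(\omega) > c$, which shows $(X > c) \subseteq (X_1 + \cdots + X_q > c)$. For the second, I argue by contraposition: if $X_i(\omega) \leq c/q$ for every $i = 1, \ldots, q$, then summing yields $X_1(\omega) + \cdots + X_q(\omega) \leq q \cdot (c/q) = c$; hence whenever the sum exceeds $c$, at least one $X_i$ must exceed $c/q$, i.e. $(X_1 + \cdots + X_q > c) \subseteq \bigcup_{i=1}^q (X_i > c/q)$. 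Combining the two inclusions and then applying monotonicity of $\mbP$ followed by Boole's inequality to the finite family $\{(X_i > c/q)\}_{i=1}^q$ gives $\mbP\{X > c\} \leq \mbP\{\bigcup_{i=1}^q (X_i > c/q)\} \leq \sum_{i=1}^q \mbP\{X_i > c/q\}$, as claimed.

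There is no real obstacle here: the statement is elementary and the proof is short. If anything merits a word of care, it is only the measurability bookkeeping — the $B_n$ lie in the underlying $\sigma$-algebra since it is closed under countable unions and complements, and the events $(X > c)$, $(X_i > c/q)$ are measurable because $X$ and the $X_i$ are random variables — but this is routine and I would mention it in a single clause rather than dwell on it.
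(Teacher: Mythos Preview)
Your proof is correct and entirely standard. Note, however, that the paper does not actually prove this lemma: it is stated there as a basic textbook result with a citation to~\cite{bil95}, so there is no ``paper's own proof'' to compare against. Your disjointification argument for Boole's inequality and the contrapositive for the second inclusion are exactly what one finds in standard references.
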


\begin{lemma}[Borel-Cantelli's First Lemma] \label{lem:borel_cantelli}
For a sequence $A_1, A_2, \ldots$ of events defined on a probability space, if $\sum_{n=1}^{\infty}\mbP\left\{A_n\right\}<\infty$, then the probability $\mbP\left\{A_n \ \mathrm{i.o.}\right\}$ of $A_n$ happening ``infinitely often" is $$\mbP\left\{A_n \ \mathrm{i.o.}\right\} \defn \mbP\left\{\bigcap_{n=1}^{\infty} \bigcup_{m=n}^{\infty} A_m\right\}=0.$$
\end{lemma}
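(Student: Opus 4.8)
The plan is to combine the standard continuity-from-above property of probability measures with Boole's inequality (Lemma~\ref{lem:boole}), which has just been established, so that no new machinery is needed.

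First I would introduce the tail events $B_n \defn \bigcup_{m=n}^{\infty} A_m$ and observe that they form a nested decreasing sequence, $B_1 \supseteq B_2 \supseteq \cdots$, whose intersection $\bigcap_{n=1}^{\infty} B_n$ is by definition the event $\{A_n\ \mathrm{i.o.}\}$. Since $\mbP$ is a probability measure we have $\mbP\{B_1\} \le 1 < \infty$, so continuity from above applies and yields $\mbP\{A_n\ \mathrm{i.o.}\} = \mbP\left\{\bigcap_{n=1}^{\infty} B_n\right\} = \lim_{n\to\infty}\mbP\{B_n\}$.

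Next I would bound each term using the countable form of Boole's inequality: $\mbP\{B_n\} = \mbP\left\{\bigcup_{m=n}^{\infty} A_m\right\} \le \sum_{m=n}^{\infty}\mbP\{A_m\}$. The hypothesis $\sum_{n=1}^{\infty}\mbP\{A_n\} < \infty$ says the right-hand side is the tail of a convergent series, hence $\sum_{m=n}^{\infty}\mbP\{A_m\} \to 0$ as $n\to\infty$. Passing to the limit gives $\lim_{n\to\infty}\mbP\{B_n\} = 0$, and therefore $\mbP\{A_n\ \mathrm{i.o.}\} = 0$, as claimed.

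The argument is entirely routine; the only places deserving a moment's care are (i) checking that continuity from above is legitimately invoked, which it is because $\mbP\{B_1\}$ is finite, and (ii) making sure Boole's inequality is applied in its countable (not merely finite) version, which is precisely the form recorded in Lemma~\ref{lem:boole}. Thus I do not anticipate any genuine obstacle.
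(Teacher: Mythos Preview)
Your argument is the standard textbook proof and is correct. The paper itself does not supply a proof of this lemma; it merely states the result and refers the reader to a standard probability text (Billingsley), so there is nothing to compare against beyond noting that your write-up is exactly the kind of routine verification the authors chose to omit.
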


%%%%% ADDED 9/2/16

\section{RELATED WORK}
Much progress has been made in recent times on solving various flavors of the SO problem. The predominant solution methods in the simulation literature fall into two broad categories called Stochastic Approximation (SA) and Sample-Average Approximation (SAA). SA and SAA have enjoyed a long history with mature theoretical and algorithmic literature. More recently, newer classes of algorithms that can be described as ``stochastic versions" of iterative structures in the deterministic context have emerged. 

\subsection{SA and SAA}

%Two key approaches have emerged in the simulation literature to solve SO problems: Stochastic Approximation (SA) and Sample Average Approximation (SAA). 
Virtually all stochastic approximation type methods are subsumed by the following generic form: \begin{equation}\label{sa}\BFX_{k+1} = \Pi_{\mathcal{D}} \left(\BFX_{k} - a_k \BFG_{k} \right),\end{equation} where $\Pi_{\mathcal{D}}(\BFx)$ is the projection of the point $\BFx$ onto the set $\mathcal{D}$, $\{a_k\}$ is a user-chosen positive-valued scalar sequence, and $\BFG_{k}$ is an estimator of the gradient $\nabla f(\BFX_{k})$ of the function $f$ at the point $\BFX_{k}$. When direct Monte Carlo observations of the objective function $f$ are available, the most common expression for $\BFG_{k}=\left(G_{k}^{1},G_{k}^{2},\cdots,G_{k}^{d}   \right)$ is either the central-difference approximation $G_{k}^{i} = (2c_{k})^{-1} (F(\BFX_{k}+ c_{k}\BFe_{i}) - F(\BFX_{k}- c_{k}\BFe_{i}))$ or the forward-difference approximation $G_{k}^{i} = c_{k}^{-1} (F(\BFX_{k}+ c_{k}\BFe_{i}) - F(\BFX_{k}))$ of the gradient $\nabla f(\BFX_{k})$, where $\{c_{k}\}$ is a positive-valued sequence and $F: \real^d \to \real$ is the observable estimator of the objective function $f:\real^d \to \real$. The resulting recursion is the famous Kiefer-Wolfowitz process~\cite{kw1,blum1}. More recent recursions include an estimated Hessian $H_k(\cdot)$ of the function $f$ at the point $\BFX_{k}$: %as part of $\BFG_{k}$: 
\begin{equation}\label{hessian}
%EDIT 7/12/16
%G_{k}^{i} = (2c_k)^{-1}H_k^{-1}(\BFX_{k})\left(F(\BFX_{k}+ c_{k}\BFe_{i}) - F(\BFX_{k}- c_{k}\BFe_{i})\right),
\BFX_{k+1} = \Pi_{\mathcal{D}} \left(\BFX_{k} - a_k H_k^{-1}\BFG_{k} \right),
\end{equation} making the resulting recursion in (\ref{sa}) look closer to the classical Newton's iteration in the deterministic context. The Hessian estimator $H_k(\cdot)$ has $d^2$ entries, and hence, most methods that use (\ref{hessian}) %in (\ref{sa}) 
estimate $H_k(\cdot)$ either using a parsimonious design (e.g.,~\cite{spall3,spall4}), or construct it from the history of observed points.  

As can be seen in (\ref{sa}), the SA recursion is simply stated and implemented, and little has changed in its basic structure since 1951, when it was first introduced by Robbins and Monro~\cite{robbins1} for the context of finding a zero of a ``noisy" vector function. Instead, much of the research over the ensuing decades has focused on questions such as convergence and convergence rates of SA type algorithms, the effect of averaging on the consistency and convergence rates of the iterates, and efforts to choose the sequence $\{a_k\}$ in an adaptive fashion. Some good entry points into the vast SA literature include~\cite{kushner2,mokpel11,polyjudi92,pasgho13}. See~\cite{broadie2,broadie3,yousefian2012stochastic} recent attempts to address the persistent dilemma of choosing the gain sequence $\{a_k\}$ to ensure good practical performance.

SAA, in contrast to SA, is more a framework than an algorithm to solve SO problems. Instead of solving Problem $P$, SAA asks to solve a ``sample-path" Problem $P_{n}$ (to optimality) to obtain a solution estimator $\BFX_{n}$. Formally, in the unconstrained context, SAA seeks to solve
\begin{equation} \mbox{Problem } P_{n} : \mbox{ minimize } f\left(\BFx,n\right) \mbox{ subject to } \BFx\in\real^d,\end{equation} where $f\left(\BFx,n\right)$ is computed using a ``fixed" sample of size $n$.

SAA is attractive in that Problem $P_{n}$ becomes a \textit{deterministic} optimization problem and SAA can bring to bear all of the advances in deterministic nonlinear programming methods~\cite{bazaara1,nocwri2006} of the last few decades. SAA has been the subject of a tremendous amount of theoretical and empirical research over the last two decades. For example, the conditions that allow the transfer of structural properties from the sample-path to the limit function $f(\BFx)$~\cite[Propositions 1,3,4]{kimpashen14}; the sufficient conditions for the consistency of the optimal value and solution of Problem $P_{n}$ assuming the numerical procedure in use within SAA can produce global optima~\cite[Theorem 5.3]{shadenrus09}; consistency of the set of stationary points of Problem $P_{n}$~\cite{shadenrus09,bastin1}; convergence rates for the optimal value~\cite[Theorem 5.7]{shadenrus09} and optimal solution~\cite[Theorem 12]{kimpashen14}; expressions for the minimum sample size $m$ that provides probabilistic guarantees on the optimality gap of the sample-path solution~\cite[Theorem 5.18]{rusz1}; methods for estimating the accuracy of an obtained solution~\cite{makmorwoo99,baymor07,baymor09}; and quantifications of the trade-off between searching and sampling~\cite{roysze_or2011}, have all been thoroughly studied. SAA is usually not implemented in the vanilla form $P_{n}$ due to known issues relating to an appropriate choice of the sample size $n$. There have been recent advances~\cite{pasgho13,deng2009variable1,baymor07,baymor09, baypie12} aimed at defeating the issue of sample size choice.

\subsection{Stochastic TRO}

Two algorithms that are particularly noteworthy competitors to what we propose here are STORM~\cite{chemensch2015} and the recently proposed algorithm by Larson and Billups~\cite{larbil2014} (henceforth LB2014). While the underlying logic in both of these algorithms key differences arise in terms of what has been assumed about the quality of the constructed models and how such quality can be achieved in practice. Another notable difference is that STORM also treats the context of biased estimators, that is, contexts where $\mbE\left[f(\BFx,n)\right]\neq f(\BFx)$. A key postulate that guarantees consistency in STORM is that the constructed models are of a certain specified quality (characterized through the notion of probabilistic full linearity) with a probability exceeding a fixed threshold. The authors provide a way to construct such models using function estimates constructed as sample means. Crucially, the prescribed sample means in STORM use a sample size that is derived using the Chebyshev inequality with an assumed upper bound on the variance. By contrast, the sample sizes in ASTRO-DF are determined adaptively by balancing squared bias and variance estimates for the function estimator. While this makes the sample size in ASTRO-DF a stopping time~\cite{bil95} thereby complicating proofs, such adaptive sampling enables ASTRO-DF to differentially sample across the search space, leading to efficiency.

LB2014, like STORM, uses random models. Unlike STORM, however, the sequence of models constructed in LB2014 are assumed to be accurate (as measured by a certain rigorous notion) with a probability sequence that converges to one. A related version of LB2014 ~\cite{billargra2013} addresses the case of differing levels of (spatial) 
stochastic error through the use of weighted regression schemes, where the weights are chosen heuristically. 

Another noteworthy algorithm for the context we consider in this paper is VNSP, proposed by Deng and Ferris~\cite{deng2006adaptation,deng2009variable1}.  VNSP uses a quadratic interpolation model within a trust-region optimization framework, and is derivative-free in the sense that only function estimates are assumed to be available. Model construction, inference, and improvement, along with (nondecreasing) sample size updates happen within a Bayesian framework with an assumed Gaussian conjugate prior. Convergence theory for VNSP is accordingly within a Bayesian setting. 

In the slightly more tangential context where unbiased gradient estimates are assumed to be available, a number of trust-region type algorithms have emerged in the last decade or so. STRONG or Stochastic Trust-Region Response-Surface Method 
\cite{chang2013stochastic}, for instance, is an adaptive sampling trust-region algorithm for solving SO problems that is in the spirit of what we propose here. A key feature of STRONG is local model construction through a design of experiments combined with a hypothesis testing procedure. STRONG assumes that the error in the derivative observations are additive and have a Gaussian distribution. Amos et. al. \cite{amos2014algorithm} and Bastin et. al. \cite{bastin2006adaptive} are two other examples of algorithms that treat the setting where unbiased observations of the gradient are assumed to be available. (The former, in fact, assumes that unbiased estimates of the Hessian of the objective function are available.)  Bastin et. al. \cite{bastin2006adaptive} is specific to the problem of estimation within mixed-logit models. 

\section{ASTRO--DF OVERVIEW AND ALGORITHM LISTING}
 \label{sec:overview}

ASTRO-DF is an adaptive sampling trust-region derivative-free algorithm whose essence is encapsulated within four repeating stages: (i) local stochastic model construction and certification through adaptive sampling; (ii) constrained optimization of the constructed model for identifying the next candidate solution; (iii) re-estimation of the objective function at the next candidate solution through adaptive sampling; and (iv) iterate and trust-region update based on a (stochastic) sufficient decrease check. These stages appear with italic labels in Algorithm \ref{alg:mainlisting}. In what follows, we describe each step of Algorithm \ref{alg:mainlisting} in further detail.

\begin{algorithm}[tb]  \myalgsize
\caption{ASTRO-DF Main Algorithm}
\label{alg:mainlisting}
\begin{algorithmic}[1]
\REQUIRE Initial guess $\BFx_{0}\in\real^d$, initial trust-region radius $\Deltatilde_{0}>0$ and maximum radius $\Delta_{\max}>0$, model ``fitness'' threshold $\eta_1 > 0$, trust-region expansion constant $\gamma_1 > 1$ and  contraction constant $\gamma_2 \in (0,1)$, initial sample size $n_0$, sample size lower bound sequence $\{\lambda_k\}$ such that $k^{(1+\epsilon)} = \mcO(\lambda_k)$,  initial sample set $\mcYtilde_0=\left\{\BFx_{0}\right\}$, and outer adaptive sampling constant $\kappa_{oas}$. 
\FOR{$k=0,1,2,\hdots$}
\STATE \label{ASTRO:construct} \textit{Model Construction:} Construct the model at $\BFX_k$ by calling Algorithm~\ref{alg:sublisting} with the candidate 
\STATEx \hspace{0.1in} trust-region radius $\Deltatilde_k$ and 
candidate set of sample points $\mcYtilde_k$,  \[[M_k(\BFX_k+\BFs),\Delta_k,\mcY_k]={\tt{AdaptiveModelConstruction}}(\Deltatilde_k,\mcYtilde_k).\] 
\STATEx \hspace{0.1in} Set $\Ntilde_{k}=N\left(\BFX_{k}\right)$.
\STATE \label{ASTRO:TRsubprob} \textit{TR Subproblem:} Approximate the $k$th step by minimizing the model in the trust-region, 
\STATEx \hspace{0.1in} $\BFS_{k}=\argmin_{\left\| \BFs\right\| \leq\Delta_{k}}M_{k}(\BFX_k+\BFs)$, and set the new candidate point $\BFXtilde_{k+1}=\BFX_k+\BFS_{k}$. 
\STATE \label{ASTRO:evaluate}\textit{Evaluate:} Estimate the function at the candidate point using adaptive sampling to obtain 
\STATEx \hspace{0.1in} $\Fbar(\BFXtilde_{k+1},\Ntilde_{k+1})$, where 
 \begin{align}
 \Ntilde_{k+1}=\max\biggl\{ \lambda_{k},\min\biggl\{ n:\frac{\sigmahat_{F}\left(\BFXtilde_{k+1},n\right)}{\sqrt{n}}\leq\frac{\kappa_{oas}\Delta_{k}^{2}}{\sqrt{\lambda_{k}}}\biggr\} \biggr\},\label{eq:outer-sampling}
\end{align} 
\STATEx\hspace{0.1in}\textit{Update:}
\STATE \label{ASTRO:ratio} Compute the success ratio $\rhohat_{k}$ as
\[
\begin{aligned}\rhohat_{k}=\frac{\Fbar\left(\BFX_{k},\Ntilde_{k}\right)-\Fbar\left(\BFXtilde_{k+1},\Ntilde_{k+1}\right)}{M_{k}(\BFX_{k})-M_{k}(\BFXtilde_{k+1})}.\end{aligned}
\]
\IF{$\rhohat_k>\eta_1$}\label{ASTRO:accept} 
\STATE $\BFX_{k+1}=\BFXtilde_{k+1},$ $\Deltatilde_{k+1}=\min\{\gamma_1\Delta_k,\Delta_{\max}\}$, $N_{k+1}=\Ntilde_{k+1}$. 
\STATEx \hspace{.22in} Update the sample set $\mcYtilde_{k+1}$ to include the new iterate.
%\STATEx \hspace{.22in} Set $\BFYtilde_{\max}:=\arg\max_{\BFY_{i}\in\mathcal{Y}_{k}}\left\{ \left\| \BFXtilde_{k+1}-\BFY_{i}\right\| \right\}. $ Update the sample set 
%\[\mcYtilde_{k+1}=\mathcal{Y}_{k} \backslash \left\{\BFYtilde_{\max}\right\} \cup \left\{\BFX_{k+1}\right\}.\]
\ELSE 
\STATE \label{ASTRO:reject} $\BFX_{k+1}=\BFX_k,$ $\Deltatilde_{k+1}=\gamma_2\Delta_k$, $N_{k+1}=\Ntilde_{k}$. 
\STATEx \hspace{.22in} Update the sample set $\mcYtilde_{k+1}$, if needed, to include the rejected candidate point.
%\STATEx \hspace{.22in} Set $\BFY_{\max}:=\arg\max_{\BFY_{i}\in\mathcal{Y}_{k}}\left\{ \left\| \BFX_{k}-\BFY_{i}\right\| \right\} $. If $\BFXtilde_{k+1}\neq\BFY_{\max}$, update 
% \[\mcYtilde_{k+1}=\mathcal{Y}_{k} \backslash \left\{\BFY_{\max}\right\} \cup \left\{\BFXtilde_{k+1}\right\}.\]
\ENDIF 
\ENDFOR
\end{algorithmic}
\end{algorithm}

\begin{algorithm}[htb]  \myalgsize
\caption{$[M_k(\BFX_k+\BFs),\Delta_k,\mcY_k]$={\tt{AdaptiveModelConstruction}}($\Deltatilde_k,\mcYtilde_k$)}
\label{alg:sublisting}
\begin{algorithmic}[1]
\REQUIRE \emph{Parameters from ASTRO-DF:} candidate trust-region radius $\Deltatilde_k$ and candidate sample set $\mcYtilde_k$ (possibly with cardinality $<p$).
\STATEx \emph{Parameters specific to \tt{AdaptiveModelConstruction}:} trust-region contraction factor $w \in (0,1)$, trust-region and gradient balance constant $\mu$, gradient inflation constant $\beta$ with $0<\beta<\mu$, and inner adaptive sampling constant $\kappa_{ias}$.
 \STATE Initialize $j_k=1$, set $\mcY_k^{(j_{k})}=\mcYtilde_k$, and set $\BFY_{1}=\BFX_{k}$ where $\BFX_{k}$ is the first element of $\mcYtilde_k$.
\STATEx\hspace{0in}\textit{Contraction loop:}
 \REPEAT
\STATE  \label{AMC:sample_set} Improve $\mcY_{k}^{(j_{k})}=\left\{ \BFY_{1}^{(j_{k})},\BFY_{2}^{(j_{k})},\ldots,\BFY_{p}^{(j_{k})}\right\}$ by appropriately choosing $\BFY_{i}^{(j_{k})},\ i=2,3,\cdots,p$ 
\STATEx \hspace{0.1in} to make it a poised set in $\mcB(\BFX_{k};\Deltatilde_{k}w^{j_k-1})$. 
\FOR{$i=1$ to $p$} \label{AMC:sampling_begin}
\STATE \label{AMC:estimate}\label{former1b}Estimate $\Fbar\left(\BFY_{i}^{(j_{k})},N\left(\BFY_{i}^{(j_{k})}\right)\right)$, where
\begin{equation}
N\left(\BFY_{i}^{(j_{k})}\right)=\max\biggl\{\lambda_k,\min\biggl\{ n:\frac{\sigmahat_{F}\left(\BFY_{i},n\right)}{\sqrt{n}}\leq\frac{\kappa_{ias}(\Deltatilde_{k}w^{j_k-1})^{2}}{\sqrt{\lambda_{k}}}\biggr\}\biggr\}.\label{eq:inner-sampling-interpolation}
\end{equation}
\ENDFOR \label{AMC:sampling_end}
\STATE Construct a quadratic model $M_{k}^{\left(j_k\right)}\left(\BFX_{k}+\BFs\right)$ via interpolation.
\STATE  Set $j_k=j_k+1$.
 \UNTIL{$\Deltatilde_{k}w^{j_k-1}\leq\mu\| \nabla M_{k}^{\left(j_k\right)}\left(\BFX_{k}\right)\|$}.\label{AMC:end_loop}
\STATE Set  $M_{k}(\BFX_k+\BFs)=M_{k}^{(j_k)}(\BFX_k+\BFs)$, $\nabla M_{k}(\BFX_{k})=\nabla M_{k}^{(j_k)}(\BFX_{k})$, and $\nabla^2 M_{k}(\BFX_{k})=\nabla^2 M_{k}^{(j_k)}(\BFX_{k})$.
\STATE\label{AMC:update}\textbf{return} $M_{k}(\BFX_k+\BFs)$, $\Delta_{k}=\min\left\{ \Deltatilde_{k},\max\left\{ \beta\left\| \nabla M_{k}(\BFX_k)\right\| ,\Deltatilde_{k}w^{j_k-1}\right\} \right\} $, and $\mcY_k=\mcY_k^{(j_{k})}$.
\end{algorithmic}
\end{algorithm}

%Step 0 of Algorithm \ref{alg:mainlisting} is the initialization step where the iteration number $k$ and trust-region radius $\Delta_k$ are initialized. 

In Step \ref{ASTRO:construct} of Algorithm \ref{alg:mainlisting}, a stochastic model of the function $f(\cdot)$ in the trust-region $\mcB(\BFX_{k};\Delta_{k})$ %having specific properties 
is constructed using Algorithm \ref{alg:sublisting}. The aim of Algorithm \ref{alg:sublisting} is to construct a model of a specified quality within a trust-region having radius smaller than a fixed multiple of the model gradient norm. During the $j_{k}$th iteration of Algorithm \ref{alg:sublisting}, a poised set $\mcY_{k}^{(j_{k})} \triangleq \{\BFY_{1}^{(j_{k})}, \BFY_2^{(j_{k})}, \ldots, \BFY_p^{(j_{k})}\}$ in the ``candidate" trust-region having radius $\Deltatilde_k w^{j_{k}-1}$ and center $\BFY_{1}^{(j_{k})}=\BFX_{k} $ is chosen (Step \ref{AMC:sample_set}); Monte Carlo function estimates are then obtained at each of the points in $\mcY_{k}^{(j_{k})}$ with $N\left(\BFY_{i}^{(j_{k})}\right)$ being the sample size at point $\BFY_{i}^{(j_{k})}$ after the $j_{k}$th iteration of the contraction loop. Sampling at each point in $\mcY_{k}^{(j_{k})}$ is adaptive and continues (Steps \ref{AMC:sampling_begin}--\ref{AMC:sampling_end}) until the estimated standard errors $\sigmahat_F\left(\BFY_{i}^{(j_{k})},N\left(\BFY_{i}^{(j_{k})}\right)\right)/\sqrt{N\left(\BFY_{i}^{(j_{k})}\right)}$ of the function estimates $\Fbar\left(\BFY_{i}^{(j_{k})},N\left(\BFY_{i}^{(j_{k})}\right)\right)$ drop below a slightly inflated square of the candidate trust-region radius. A linear (or quadratic) interpolation model is then constructed using the obtained function estimates in Step \ref{AMC:estimate}. (If a linear interpolation model is constructed, $p = d+1$, and if a quadratic interpolation model is constructed, $p = (d+1)(d+2)/2$.) If the resulting model $M_k^{(j_{k})}(\BFz), \BFz \in\mcB(\BFX_{k};\Deltatilde_k w^{j_{k}-1})$ is such that the candidate trust-region radius $\Deltatilde_k w^{j_{k}-1}$ is too large compared to the norm of the model gradient $\left\| \nabla M_k^{(j_{k})}(\BFX_{k})\right\|$, that is, if $\Deltatilde_{k} w^{j_{k}-1} > \mu \left\| \nabla M_k^{(j_{k})}(\BFX_{k})\right\|$, then the candidate trust-region radius is shrunk by a factor $w$ and control is returned back to Step \ref{AMC:sample_set}. On the other hand, if the candidate trust-region radius is smaller than the product of $\mu$ and the norm of the model gradient, then the resulting stochastic model is accepted but over an updated incumbent trust-region radius given by Step \ref{AMC:update}. (Step \ref{AMC:update} of Algorithm \ref{alg:sublisting}, akin to~\cite{conn2009global}, updates the incumbent trust-region radius to the point in the interval $[\Deltatilde_k w^{j_{k}-1}, \Deltatilde_k]$ that is closest to $\beta \left\|\nabla M_k^{(j_{k})}(\BFX_{k})\right\|$).

We emphasize the following four issues pertaining to Step 2 in Algorithm \ref{alg:mainlisting} and the model resulting from the application of Algorithm \ref{alg:sublisting}.

\begin{enumerate} \item[(i)] Due to the nature of the chosen poised set $\mcY_{k}$, the (hypothetical) limiting model $m_k(\BFX_{k})$ constructed from true function observations on $\mcY_{k}$ will be $\left(\kappa_{ef},\kappa_{eg}\right)$-fully-linear (or $\left(\kappa_{ef},\kappa_{eg},\kappa_{eh}\right)$-fully-quadratic) on the updated trust-region $\mcB(\BFX_{k};\Delta_k)$. Of course, the model $m_k(\BFX_{k})$ is unavailable since true function evaluations are unavailable; and it makes no sense to talk about whether the constructed model $M_k(\BFX_{k})$ is $\left(\kappa_{ef},\kappa_{eg}\right)$-fully-linear (or $\left(\kappa_{ef},\kappa_{eg},\kappa_{eh}\right)$-fully quadratic) since it is constructed from stochastic function estimates.

\item[(ii)] By construction, the trust-region resulting from the application of Algorithm \ref{alg:sublisting} has a radius that is at most $\beta$ times the model gradient norm $\left\|\nabla M_k(\BFX_{k})\right\|$. 

\item[(iii)] The structure of adaptive sampling in Step \ref{AMC:estimate} of Algorithm \ref{alg:sublisting} is identical to that appearing for estimation in Step \ref{ASTRO:evaluate} of Algorithm \ref{alg:mainlisting}. The adaptive sampling step simply involves sampling until the estimated standard error of the function estimate comes within a factor of the deflated square of the incumbent trust-region radius. As our convergence proofs will reveal, balancing the estimated standard error to any lower power of the incumbent trust-region radius will threaten consistency of ASTRO-DF's iterates. 

\item[(iv)] As can be seen from the algorithm listings, the quality of the constructed model in ASTRO-DF is ensured (in Step 2) at every iteration. As will be evident from our analysis, such stringency eases theoretical analysis. This is in contrast to many modern implementations of derivative-free trust-region algorithms where model quality is checked and improved (if necessary) through a ``criticality step" which is triggered only when the norm of the model gradient falls below a threshold. This selective model quality assurance makes the algorithm more numerically efficient, at the price of a more complicated trust-region management and convergence analysis (See acceptable iterations and model improving iterations in 
\cite[p. 185]{conschvic2009}). We believe that incorporating a similar criticality step in ASTRO-DF will ease computational burden during implementation. 

\end{enumerate}

%\begin{remark} \label{rem:practic} 

%\end{remark}

%If the entering trust-region radius $\Deltatilde_{k}$ to Algorithm \ref{alg:sublisting} is not small enough, the loop in Step 1 guarantees that the leaving trust-region radius $\Delta_{k}$ from Algorithm \ref{alg:sublisting} is contracted by the factor $w$ to the extent that it remains in lock step with the model gradient $\nabla M_{k}\left(\BFX_{k}\right)$, set to be at least as low as $\beta$ fraction of model gradient. On the other hand if the entering trust-region radius $\Deltatilde_{k}$ is small enough no contractions are preformed. The expected model $\mathbb{E}\left[M_{k}^{\left(j\right)}\left(\BFz\right)\right]$ is $\left(\kappa_{ef},\kappa_{eg}\right)$-fully-linear on $\mcB\left(\BFX_{k};\Deltatilde_{k}w^{j-1}\right)$ as in Definition \ref{def:flfq} since a poised interpolation set is ensured in Step 1(a). We will later prove that it implies the expected model is $\left(\kappa_{ef},\kappa_{eg}\right)$-fully-linear on the whole trust-region $\mcB\left(\BFX_{k};\Delta_{k}\right)$. Another nontrivial fact is that Algorithm \ref{alg:sublisting} terminates almost surely. The formal statement and rigorous proofs of these results appear in Section \ref{sec:conv}. Also, as we shall see in Section \ref{sec:conv}, the increased sampling resulting from the use of the inflation sequence $\lambda_k$ ensures that the spurious effects of stochastic sampling decay at a fast enough rate to ensure the almost sure convergence of the iterates $\{\BFX_{k}\}$.  

Let us now resume our discussion of Algorithm \ref{alg:mainlisting}. In Step \ref{ASTRO:construct}, Algorithm \ref{alg:mainlisting} executes {\tt{AdaptiveModelConstruction}} to obtain a model $M_{k}\left(\BFz\right), \BFz \in \mcB\left(\BFX_{k}; \Delta_{k}\right)$ whose limiting approximation is $\left(\kappa_{ef},\kappa_{eg}\right)$-fully-linear (or $\left(\kappa_{ef},\kappa_{eg},\kappa_{eh}\right)$-fully-quadratic) as observed in (i) above. Step \ref{ASTRO:TRsubprob} in Algorithm \ref{alg:mainlisting} then approximately solves the constrained optimization problem $\BFS_{k}=\arg\min_{\left\| \BFs\right\| \leq\Delta_{k}}M_{k}\left(\BFX_{k}+\BFs\right)$ to obtain a candidate point $\BFXtilde_{k+1} = \BFX_{k} + \BFS_{k}$ satisfying the $\kappa_{fcd}$-Cauchy decrease as defined in Assumption \ref{assump:cauchy}. 

In preparation for checking if the candidate solution $\BFXtilde_{k+1}$ provides sufficient decrease, Step \ref{ASTRO:evaluate} of Algorithm \ref{alg:mainlisting} obtains Monte Carlo samples of the objective function at $\BFXtilde_{k+1}$, until the estimated standard error $\sigmahat_F\left(\BFXtilde_{k+1},\Ntilde_{k+1}\right)/\sqrt{\Ntilde_{k+1}}$ of $\Fbar\left(\BFXtilde_{k+1},\Ntilde_{k+1}\right)$ is smaller than a slightly deflated square of the trust-region radius $\lambda_{k}^{-1/2}\kappa_{oas}\Delta_{k}^{2}$, subject to the sample size being at least as big as $\lambda_{k}$ (see Remark \ref{rem:lambda}). 

In Step \ref{ASTRO:ratio} of Algorithm \ref{alg:mainlisting}, the obtained function estimate is used to check if the so called  \emph{success ratio} $\rhohat_{k}$, that is, the ratio of the predicted to the observed function decrease at the point $\BFXtilde_{k+1}$, exceeds a fixed threshold $\eta_1$. The denominator of the success ratio $\rhohat_{k}$ is calculated by evaluating the constructed model at $\BFXtilde_{k+1}$ using the analytical form listed in Definition \ref{def:stoch-linear-interpolation}. If $\rhohat_{k}$ exceeds the threshold $\eta_{1}$, the candidate $\BFXtilde_{k+1}$ is accepted as the new iterate $\BFX_{k+1}$, the iteration is deemed successful, and the trust-region is expanded (Step \ref{ASTRO:accept}). If $\rhohat_{k}$ falls below the specified threshold $\eta_1$, the candidate $\BFXtilde_{k+1}$ is rejected (though it may remain in the sample set), the iteration is deemed unsuccessful, and the trust-region is shrunk (Step \ref{ASTRO:reject}). In either case, $\Deltatilde_{k+1}$ is set as the incumbent trust-region radius, $N_{k+1}$ is set as the current sample size of $\BFX_{k+1}$, and $\mcY_{k}$ is set as the interpolation set for the next iteration. Note that in the next iteration the sample size of $\BFX_{k+1}$ is subject to change through Step \ref{ASTRO:construct} again.%, and the algorithm control is returned to Step \ref{ASTRO:cons}.

\begin{remark} \label{rem:lambda} The sequence $\{\lambda_k\}$ appearing as the first argument of the ``max" function in the expression for the adaptive sample size (in Step \ref{ASTRO:evaluate} of Algorithm \ref{alg:mainlisting} and Step \ref{AMC:estimate} of Algorithm \ref{alg:sublisting}) is standard for all adaptive sampling contexts, e.g.,~\cite{chorob1965,ghomuksen97}, and intended to nullify the effects of mischance without explicitly participating in the limit. Since $N=\max\left\{\lambda_k,\lambda_k\sigmahat_F^2(\BFX_k,N)\kappa_{oas}^{-2}\Delta_k^{-4}\right\}$, and since we will demonstrate that $\sigmahat_F^2(\BFX_k,N)\xrightarrow{wp1}\sigma > 0$ and $\Delta_{k}\xrightarrow{wp1}0$, the probability of the first argument in the expression for the adaptive sample size being binding will decay to zero as $k \to \infty$.
%In fact, the first argument $\lambda_k$ in the expression for the adaptive sample size can be reduced to $\lambda_k^{\gamma}$ for any $\gamma \in (0,1]$ with little effect in the ensuing proofs. We have chosen not to do this in view of not making the user choose yet another parameter.
\end{remark}

\section{CONVERGENCE ANALYSIS OF ASTRO-DF}\label{sec:conv}

As noted in Section \ref{sec:overview}, the convergence behavior of ASTRO-DF depends crucially on the behavior of three error terms expressed through the following decomposition: 
\begin{align}\label{eq:err_decomp}\left\vert \Fbar\left(\BFXtilde_{k+1},\Ntilde_{k+1}\right)-M_k\left(\BFXtilde_{k+1}\right)\right\vert \leq &\left\vert \Fbar\left(\BFXtilde_{k+1},\Ntilde_{k+1}\right)-f\left(\BFXtilde_{k+1}\right)\right\vert \nonumber \\
&+\left\vert f\left(\BFXtilde_{k+1}\right)-m_k\left(\BFXtilde_{k+1}\right)\right\vert \nonumber \\
&+\left\vert m_k\left(\BFXtilde_{k+1}\right)-M_k\left(\BFXtilde_{k+1}\right)\right\vert.
\end{align}
The three terms appearing on the right-hand side of (\ref{eq:err_decomp}) can be interepreted, respectively, as follows: (i) the \emph{stochastic sampling error} $\left\vert \Fbar\left(\BFXtilde_{k+1},\Ntilde_{k+1}\right)-f\left(\BFXtilde_{k+1}\right)\right\vert$ arising due to the fact that function evaluations are estimated using Monte Carlo; (ii) the \emph{deterministic model error} $\left\vert f\left(\BFXtilde_{k+1}\right)-m_k\left(\BFXtilde_{k+1}\right)\right\vert$ arising due to the choice of local model; and (iii) the \emph{stochastic interpolation error} $\left\vert m_k\left(\BFXtilde_{k+1}\right)-M_k\left(\BFXtilde_{k+1}\right)\right\vert$ arising due to the fact that model prediction at unobserved points is a combination of the model bias and the error in (i). (The analysis in the deterministic context involves only the error in (ii).) Accordingly, driving the errors in (i) and (ii) to zero sufficiently fast, while ensuring the fully-linear or quadratic sufficiency of the expected model, guarantees almost sure convergence. 

Driving the errors in (i) and (ii) to zero sufficiently fast is accomplished by forcing the sample sizes to increase across iterations at a sufficiently fast rate, something that we ensure by keeping the estimated standard error of all function estimates in lock step with the square of the trust-region radius. The trust-region radius is in turn also kept in lock-step with the model gradient through the model construction Algorithm \ref{alg:sublisting}. Such a deliberate lock-step between the model error, trust-region radius, and the model gradient is aimed at efficiency without sacrificing consistency. 

In what follows, we provide a formal proof of the wp1 convergence of ASTRO-DF's iterates. Recall that we assume that the models being constructed within Step \ref{ASTRO:construct} of Algorithm \ref{alg:mainlisting} are either linear or quadratic. Furthermore, we focus only on convergence to a first-order critical point of the function $f$. 

We first list six standing assumptions that are assumed to hold for the ensuing results. Additional assumptions will be made as and when required.

%\textbf{Assumption 1.} The simulation oracle calls have embedded $\mathbb{E}\left[F\left(\BFx\right)\icright]=f\left(\BFx\right)$,
%$\mathrm{Var}\left(F\left(\BFx\right)\right)=\sigma^{2}<\infty$.   

%\bigskip{}
\begin{assumption}
\label{assump:diff-bddbelow}
The function $f$ is continuously differentiable and bounded from below. Furthermore, it has Lipschitz continuous gradients, that is, there exists $\nu_{gL}$ such that $\left\| \nabla{f}\left(\BFx\right)-\nabla{f}\left(\BFy\right)\right\| \leq \nu_{gL} \left\| \BFx-\BFy\right\|$ for all $\BFx,\BFy\in\real^d$. 
\end{assumption}

\begin{assumption}\label{assump:crit}
There exists a first-order critical point associated with Problem $P$, that is, there exists $\BFx^{\ast}\in\real^d$ such that: $\nabla{f}\left(\BFx^{\ast}\right)=0$.
\end{assumption}

%\begin{assumption}
%The constraint set is vacuous, that is, in Problem $P$, $c=0$.
%\end{assumption}

\begin{assumption}
\label{assump:var}
The Monte Carlo oracle, when executed at $\BFX_{k}$, generates independent and identically distributed random variates $F_{j}(\BFX_{k}) = f(\BFX_{k}) + \xi_j \, \vert \, \mcF_k$, where $\xi_1,\xi_2, \ldots $ is a martingale-difference sequence adopted to $\mcF_k$ such that $\mbE\left[\xi_j^2 \, \vert \, \mcF_k\right]=\sigma^2$ for all $k$, where $\sigma^2<\infty$ and $\sup_k \mathbb{E}[|\xi_j|^{4v} \, \vert \, \mcF_k] < \infty$ for some $v \geq 2$. 
\end{assumption}

Assumptions \ref{assump:diff-bddbelow} -- \ref{assump:var} are arguably mild assumptions relating to the problem. The following three assumptions relate to the nature of the proposed algorithm ASTRO-DF.

\begin{assumption}
\label{assump:cauchy}
The minimizer obtained in the trust-region subproblem (Step \ref{ASTRO:TRsubprob} of Algorithm \ref{alg:mainlisting}) satisfies a $\kappa_{fcd}$-Cauchy decrease with $\kappa_{fcd} > 0$, that is,
\[
M_{k}\left(\BFX_{k}\right)-M_{k}\left(\BFXtilde_{k+1}\right)\geq\frac{\kappa_{fcd}}{2}\left\| \nabla M_{k}\left(\BFX_{k}\right)\right\| \min\left\{ \frac{\left\| \nabla M_{k}\left(\BFX_{k}\right)\right\| }{\left\| \nabla^2 M_{k}\left(\BFX_{k}\right)\right\| },\Delta_{k}\right\}.
\]\end{assumption}

\begin{assumption}
\label{assump:hessian-bound}
There exists a positive constant $\kappa_{bhm}$, such that %for every iteration $k$ and every $\BFx$, 
the model Hessian $\nabla^2 M_{k}\left(\BFX_k\right)$ satisfies 
\[
\mbP\left\{ \omega: \ \limsup_{k\rightarrow\infty}\left\| \nabla^2 M_{k}\left(\BFX_k(\omega)\right)\right\| \leq\kappa_{bhm}\right\} =1,
\] that is, the model Hessians are eventually bounded for all $\BFx$ and $k$ almost surely.
\end{assumption}

\begin{assumption} \label{assump:lambda_lbd} The ``lower-bound sequence" $\{\lambda_k\}$ is chosen to satisfy
$k^{(1+\epsilon)} = \mcO(\lambda_k)$ for some $\epsilon > 0$.
\end{assumption}

%Assumption \ref{assump:cauchy} holds automatically for linear interpolation models since the model Hesssian $\nabla^2 M_{k}\left(\BFx\right) = 0$. For quadratic interpolation models, it can be shown that if $\BFs_k$ is chosen as $\BFs_k = t_C \nabla M_k(\BFX_{k})$, where $$t_C = \argmin_{\alpha \in [0,\Delta_k]} \,\, M_k(\BFX_{k} - \alpha \nabla M_k(\BFX_{k})),$$ then $\BFs_k$ satisfies a $\frac{1}{2}$-Cauchy decrease. Steps $\BFs_k$ based on an inexact line search can also be accommodated to satisfy a $\kappa_{fcd}$ Cauchy decrease. (See Section 10.1 in~\cite{conschvic2009} for additional details.)

%%%%%% EDIT 2/1/16
Assumption \ref{assump:cauchy} is usually easily ensured through an appropriate choice in the trust-region subproblem step (Step \ref{ASTRO:TRsubprob} of Algorithm \ref{alg:mainlisting}). For example, Assumption \ref{assump:cauchy} is satisfied if the optimization resulting from Step \ref{ASTRO:TRsubprob} of Algorithm \ref{alg:mainlisting} yields a solution that is at least as good as the Cauchy step $t_C$, which is the minimizer of the model $M_k(\cdot)$ along the direction $\nabla M_k(\BFX_k)$ and constrained to the trust-region, that is, $$t_C = \argmin_{\alpha \in [0,\Delta_k]} \,\, M_k(\BFX_{k} - \alpha \nabla M_k(\BFX_{k})).$$ (See Section 10.1 in~\cite{conschvic2009} for additional details.) Likewise, Assumption \ref{assump:hessian-bound} can be enforced through a check that is performed each time the model is constructed or updated. And, Assumption \ref{assump:lambda_lbd} imposes a (weak) minimum increase on the sample sizes for estimation and model construction operations within ASTRO-DF. 

\begin{remark} It is our view that the minimum rate of increase on the lower bound sequence $\{\lambda_k\}$ can be reduced to a logarithmic increase instead of what has been assumed in Assumption \ref{assump:lambda_lbd}. Using the notation of Theorem \ref{thm:meanasylim}, this will require a large-deviation type bound on the tail probability $\mbP\{|\bar{X}_N| > t\}$ after assuming the existence of the moment-generating function of $X_i$'s. To the best of our knowledge there currently exist no such results for fixed-width confidence interval stopping, which is the context of Theorem \ref{thm:meanasylim}. \end{remark}

\subsection{Main Results}\label{sec:mainres}

We are now ready to establish the \emph{consistency}, that is, the almost sure convergence to a stationary point, of the iterates generated by ASTRO-DF. The roadmap for consistency consists of three main theorems supported by three lemmas. The first of the main theorems is Theorem \ref{thm:delta-conv} where we demonstrate that the sequence of trust-region radii $\{\Delta_k\}$ across the iterations of ASTRO-DF converges to zero with probability one. Next, Theorem \ref{thm:finite-unsuccess} establishes that the iterations within ASTRO-DF are eventually successful with probability one. The final result that establishes the almost sure convergence of ASTRO-DF's iterates to a stationary point appears as Theorem \ref{thm:conv}. With the exception of Lemma \ref{lem:bounds}, all results that follow analyze the ensemble probabilistic behavior of the sample paths to then make inferences about individual sample-paths, primarily through the first Borel-Cantelli's first lemma (see Lemma \ref{lem:borel_cantelli}). Lemma \ref{lem:bounds} is an exception in that the analysis there is pathwise, without probabilistic arguments.    

We start with Lemma \ref{lem:fhat-bounded} which establishes that the sequence of function estimates at the iterates generated by ASTRO-DF has to remain bounded with probability one. 

\begin{lemma}
\label{lem:fhat-bounded} 
Let Assumptions \ref{assump:diff-bddbelow}, \ref{assump:var}, and \ref{assump:lambda_lbd} hold. Then 
\begin{itemize}
\item[(a)] $\mbP\left\{\lim_{k\rightarrow\infty} \Fbar\left(\BFX_{k},N_{k}\right)=-\infty\right\} =0$,
\item[(b)] $\mbP\left\{ \left|\Fbar\left(\BFX_{k},N_{k}\right)-f\left(\BFX_{k}\right)\right|\geq c\Delta_k^2\:\:\mathrm{i.o.}\right\}=0$.
\end{itemize}
\end{lemma}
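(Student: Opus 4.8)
The plan is to handle parts (a) and (b) by controlling the two sources of error in $\Fbar(\BFX_k,N_k)$: the deterministic decrease in $f$ along the iterates, and the stochastic sampling error at each accepted point. The key structural observation is that the sample size $N_k$ used at $\BFX_k$ is an adaptive stopping time of exactly the form analyzed in Theorem~\ref{thm:meanasylim}, with $\lambda = \lambda_k$, $\kappa = \kappa_{oas}\Delta_k^2$ (when $\BFX_k$ arose as a candidate accepted via Step~\ref{ASTRO:evaluate}) or $\kappa = \kappa_{ias}\Delta_k^2$ (when it arose through the model-construction loop), and $\gamma$ determined by how $\lambda_k$ grows; either way, by construction $\sigmahat_F(\BFX_k,N_k)/\sqrt{N_k} \le \kappa_{oas}\Delta_k^2/\sqrt{\lambda_k}$ (up to the relevant constant), so the sampling error $E_k \defn \Fbar(\BFX_k,N_k)-f(\BFX_k)$ satisfies a tail bound through Theorem~\ref{thm:meanasylim}(iii)–(iv) and the fact that $N_k \ge \lambda_k \gtrsim k^{1+\epsilon}$ by Assumption~\ref{assump:lambda_lbd}.

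For part (b), I would fix $c>0$ and define $A_k \defn \{|\Fbar(\BFX_k,N_k)-f(\BFX_k)| \ge c\Delta_k^2\}$. Conditioned on $\mcF_k$ (hence on the realized $\Delta_k$, $\lambda_k$, and on which sampling rule produced $N_k$), the event $A_k$ is the event that the sample mean at a fixed point, stopped by the adaptive rule, exceeds $c\Delta_k^2$ in absolute value. Since the stopping rule guarantees $\sigmahat_F(\BFX_k,N_k) / \sqrt{N_k}$ is of order $\Delta_k^2/\sqrt{\lambda_k}$, and $N_k \ge \lambda_k$, a Chebyshev/Markov argument on $\Fbar(\BFX_k,N_k)-f(\BFX_k) = N_k^{-1}\sum \xi_j$ — made rigorous via the moment bound $\mbE[(\Fbar(\BFX_k,N_k)-f(\BFX_k))^{2s}\mid\mcF_k] = \mcO(\Delta_k^{4s}\lambda_k^{-s})$ implied by Theorem~\ref{thm:meanasylim}(iv) and the moment assumption $\mbE[|\xi_1|^{4v}\mid\mcF_k]<\infty$ with $v\ge 2$ — gives $\mbP\{A_k \mid \mcF_k\} \le \beta\, (c\Delta_k^2)^{-2s}\,\Delta_k^{4s}\lambda_k^{-s} = \beta\, c^{-2s}\lambda_k^{-s}$ for any $s < v$. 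Taking $s=2$ (legitimate since $v\ge 2$), and using $\lambda_k \gtrsim k^{1+\epsilon}$, we get $\sum_k \mbP\{A_k\} = \mathbb{E}[\sum_k \mbP\{A_k\mid\mcF_k\}] \le \beta c^{-4}\sum_k k^{-2(1+\epsilon)} < \infty$. Borel–Cantelli (Lemma~\ref{lem:borel_cantelli}) then yields $\mbP\{A_k \text{ i.o.}\}=0$, which is (b). (I would take care that the $\mcF_k$-conditional application of Theorem~\ref{thm:meanasylim} is legitimate: conditionally on $\mcF_k$ the $\xi_j$ are i.i.d. with the stated moments by Assumption~\ref{assump:var}, and $\Delta_k,\lambda_k$ are $\mcF_k$-measurable, so the bound in (iii)–(iv) applies with $\mcF_k$-measurable — but still deterministic given $\mcF_k$ — parameters.)

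For part (a), the idea is that $\Fbar(\BFX_k,N_k) = f(\BFX_k) + E_k$, and (b) already shows $|E_k| \le c\Delta_k^2$ for all large $k$ on a set of probability one; since $\Delta_k \le \Delta_{\max}$ always, $E_k$ is bounded above in absolute value eventually (in fact $o(1)$ if $\Delta_k\to 0$, but we cannot yet invoke that since Theorem~\ref{thm:delta-conv} comes later, so I would only use boundedness). Meanwhile $f$ is bounded from below by Assumption~\ref{assump:diff-bddbelow}. On successful iterations the success-ratio test with threshold $\eta_1>0$ together with the Cauchy-decrease Assumption~\ref{assump:cauchy} forces $\Fbar(\BFX_k,N_k)-\Fbar(\BFXtilde_{k+1},N_{k+1}) \ge \eta_1(M_k(\BFX_k)-M_k(\BFXtilde_{k+1})) \ge 0$, so the sequence $\{\Fbar(\BFX_k,N_k)\}$ is nonincreasing along the subsequence of successful iterations and constant on unsuccessful ones, hence nonincreasing overall once $N_{k+1}$ is correctly tracked (note $N_{k+1}=\Ntilde_{k+1}$ on acceptance and $N_{k+1}=\Ntilde_k$ on rejection, consistent with $\BFX_{k+1}$). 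A monotone nonincreasing sequence converges in $\widebar{\real}$; to rule out the limit being $-\infty$, write $\Fbar(\BFX_k,N_k) = f(\BFX_k) + E_k \ge \inf f - |E_k| \ge \inf f - c\Delta_{\max}^2$ for all large $k$ w.p.1, which is a finite lower bound. Hence $\mbP\{\lim_k \Fbar(\BFX_k,N_k) = -\infty\}=0$.

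The main obstacle I anticipate is the bookkeeping around the conditional application of Theorem~\ref{thm:meanasylim}: one must argue that, whichever of the two adaptive rules (outer Step~\ref{ASTRO:evaluate} vs.\ inner Step~\ref{AMC:estimate}) generated $N_k$, the stopping-time structure matches the hypotheses of Theorem~\ref{thm:meanasylim} with an $\mcF_k$-measurable threshold proportional to $\Delta_k^2$ and lower bound $\lambda_k$, and that the resulting moment bound on the sampling error holds \emph{uniformly enough} to be summed after taking expectations over $\mcF_k$ — this is where the ``$\sup_k$'' in Assumption~\ref{assump:var} and the generic-constant $\beta$ in Theorem~\ref{thm:meanasylim}(iii) are doing the real work. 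The monotonicity argument in (a) is routine once one is careful that the recorded sample size $N_{k}$ always pairs with the recorded iterate $\BFX_k$ (which the algorithm's Update step ensures).
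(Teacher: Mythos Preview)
Your core approach for part~(b) --- condition on $\mcF_k$, apply Chebyshev with the second-moment bound from Theorem~\ref{thm:meanasylim}(iv), then sum and invoke Borel--Cantelli --- is exactly what the paper does. Two corrections, though. First, Theorem~\ref{thm:meanasylim}(iv) as stated gives only the \emph{second} moment $\mathbb{E}[\bar{X}_N^2]\sim\kappa^2\lambda^{-1}$; your claimed $2s$-th moment bound for $s=2$ is not supplied by that theorem (and ``$s<v$ with $v\ge 2$'' does not permit $s=2$ anyway). Fortunately $s=1$ already suffices: $\mbP\{A_k\mid\mcF_k\}\le c^{-2}(1+\delta)\kappa_{oas}^2\lambda_k^{-1}$, which is summable since $k^{1+\epsilon}=\mcO(\lambda_k)$. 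That is precisely the paper's bound (\ref{eq:contra_lb_Delta}).

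For part~(a) your route --- derive (b) first, then use $\Fbar(\BFX_k,N_k)=f(\BFX_k)+E_k\ge\inf f-c\Delta_{\max}^2$ eventually --- is valid and arguably cleaner than the paper's, which re-runs the Chebyshev/Borel--Cantelli argument with the constant threshold $c$ in place of $c\Delta_k^2$. However, your monotonicity claim is \emph{false} and should be deleted: the success ratio $\rhohat_k$ compares $\Fbar(\BFX_k,\Ntilde_k)$, not $\Fbar(\BFX_k,N_k)$, to $\Fbar(\BFXtilde_{k+1},\Ntilde_{k+1})$, and in general $\Ntilde_k\neq N_k$ because the model-construction step resamples at $\BFX_k$. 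The paper explicitly remarks (just before Theorem~\ref{thm:conv}) that $\{\Fbar(\BFX_k,N_k)\}$ need not be monotone. This also means you are invoking Assumption~\ref{assump:cauchy}, which is not among the lemma's hypotheses. None of this is needed: the eventual lower bound $\Fbar(\BFX_k,N_k)\ge\inf f-c\Delta_{\max}^2$ already rules out $\lim_k\Fbar(\BFX_k,N_k)=-\infty$ without any monotonicity.
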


\begin{proof}
For part (a) we know from Assumption \ref{assump:diff-bddbelow} that $f$ is bounded from below. Hence we can write
for any $c\in\mathbb{R}$, %for any $c>\sqrt{2}\sigma$, %, there exists a large number $C$, such that $f\left(\BFX_{k}\right)\geq C$ for all $\BFx\in\mbX$. Then
\begin{equation}\label{contra}
\mbP\left\{ \lim_{k\rightarrow\infty}\Fbar\left(\BFX_{k},N_{k}\right)=-\infty\right\} 
\leq \mbP\left\{ \left|\Fbar\left(\BFX_{k},N_{k}\right)-f\left(\BFX_{k}\right)\right|\geq c\:\:\mathrm{i.o.}\right\} ,\end{equation} 
where $\rm{i.o.}$ stands for ``infinitely often." (See Section \ref{sec:notcon} for a formal definition and use of ``infinitely often.") However by the law of total probability,
\begin{align}\label{ubdio_bd}
\mbP\left\{ \left|\Fbar\left(\BFX_{k},N_{k}\right)-f\left(\BFX_{k}\right)\right|\geq c\right\}  
& = \mathbb{E}\left [\mbP\left\{ \left|\Fbar\left(\BFX_{k},N_{k}\right)-f\left(\BFX_{k}\right)\right|\geq c \, \vert \, \mcF_k \right\} \right ] \nonumber \\
& \leq \mathbb{E}[c^{-2}\mathbb{E}[ \left(\Fbar\left(\BFX_{k},N_{k}\right)-f\left(\BFX_{k}\right)\right)^2 \, \vert \, \mcF_k]],\end{align} where the last inequality follows from Chebyshev's inequality~\cite{bil95}. Now invoke part (iv) of Theorem \ref{thm:meanasylim} along with Assumption \ref{assump:var} and the sample size expression in (\ref{eq:outer-sampling}) to notice that \begin{equation}\label{mukghobd}\mathbb{E}[ \left(\Fbar\left(\BFX_{k},N_{k}\right)-f\left(\BFX_{k}\right)\right)^2 \, \vert \, \mcF_k] \sim \kappa_{oas}^2\Delta_k^4\lambda_k^{-1}\end{equation} as $\lambda_k \to \infty;$ that is, for large-enough $k$, we can write for any $\delta>0$ that \begin{align}\label{martbd}\mathbb{E}[ \left(\Fbar\left(\BFX_{k},N_{k}\right)-f\left(\BFX_{k}\right)\right)^2 \, \vert \, \mcF_k] & \leq (1+\delta)\kappa_{oas}^2\Delta_k^4\lambda_k^{-1} \nonumber \\
& \leq (1+\delta)\kappa_{oas}^2\Delta_{\max}^4\lambda_k^{-1}.\end{align} Now use (\ref{ubdio_bd}) and (\ref{martbd}) to write \begin{align} \label{eq:contra_lb} \mbP\left\{ \left|\Fbar\left(\BFX_{k},N_{k}\right)-f\left(\BFX_{k}\right)\right|\geq c\right\} & \leq \mathbb{E}[c^{-2}\mathbb{E}[ \left(\Fbar\left(\BFX_{k},N_{k}\right)-f\left(\BFX_{k}\right)\right)^2 \, \vert \, \mcF_k]] \nonumber \\ 
& \leq \mathbb{E}[c^{-2}(1+\delta)\kappa_{oas}^2\Delta_{\max}^4\lambda_k^{-1}] \nonumber \\
& = c^{-2}(1+\delta)\kappa_{oas}^2\Delta_{\max}^4\lambda_k^{-1}.\end{align} The right-hand side of (\ref{eq:contra_lb}) is summable since $k^{(1 + \epsilon)} = \mcO(\lambda_k)$ for some $\epsilon > 0$; we can thus invoke the first Borel-Cantelli Lemma~\cite{bil95} and conclude that the right-hand side of (\ref{contra}) is zero. This proves part (a).

In fact, similar to (\ref{eq:contra_lb}) we have \begin{align} \label{eq:contra_lb_Delta} \mbP\left\{ \left|\Fbar\left(\BFX_{k},N_{k}\right)-f\left(\BFX_{k}\right)\right|\geq c\Delta_k^2\right\} & \leq \mathbb{E}[c^{-2}\mathbb{E}[ \left(\Fbar\left(\BFX_{k},N_{k}\right)-f\left(\BFX_{k}\right)\right)^2 \, \vert \, \mcF_k]] \nonumber \\ 
& \leq \mathbb{E}[c^{-2}\Delta_{k}^{-4}(1+\delta)\kappa_{oas}^2\Delta_{k}^4\lambda_k^{-1}] \nonumber \\
& = c^{-2}(1+\delta)\kappa_{oas}^2\lambda_k^{-1},\end{align} which is again summable and hence allows the invocation of Borel-Cantelli's first lemma~\cite{bil95}, proving part (b).

\end{proof}

Next, we state a theorem that plays a crucial role in proving the overall convergence of ASTRO-DF iterates. Recall that even in deterministic derivative-free trust-region algorithms, unlike trust-region algorithms where derivative observations are available, the trust-region radius necessarily needs to decay to zero to ensure convergence. Theorem \ref{thm:delta-conv} states that this is indeed the case for ASTRO-DF. The proof rests on Lemma \ref{lem:fhat-bounded} and the assumed sufficient Cauchy decrease guarantee during Step \ref{ASTRO:TRsubprob} of Algorithm \ref{alg:mainlisting}. 

\begin{theorem}
\label{thm:delta-conv} Let Assumptions \ref{assump:diff-bddbelow}, \ref{assump:var}, \ref{assump:cauchy}, \ref{assump:hessian-bound} and \ref{assump:lambda_lbd} hold. Then $\Delta_{k}\xrightarrow{wp1}0$ as $k\rightarrow\infty$.
\end{theorem}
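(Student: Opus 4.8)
The plan is to argue by contradiction: suppose that on a set of positive probability the trust-region radii $\{\Delta_k\}$ do not converge to zero. The standard deterministic trust-region argument says that successful iterations with a non-vanishing trust-region radius force an unbounded decrease in the objective; here the objective is only observed through the estimates $\Fbar(\BFX_k,N_k)$, so the contradiction should be drawn against part (a) of Lemma~\ref{lem:fhat-bounded}, namely $\mbP\{\lim_{k\to\infty}\Fbar(\BFX_k,N_k)=-\infty\}=0$. Concretely, I would first note that by Step~\ref{ASTRO:reject} of Algorithm~\ref{alg:mainlisting}, $\Delta_{k+1}\le\gamma_2\Delta_k$ on every unsuccessful iteration with $\gamma_2\in(0,1)$, and $\Delta_{k+1}\le\gamma_1\Delta_k$ otherwise; consequently, if $\Delta_k\not\to 0$ on a sample path, there must be infinitely many successful iterations, and along a further subsequence the radius stays bounded away from $0$, say $\Delta_k\ge\delta_0>0$ infinitely often on the "bad" event.

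The key deterministic estimate is a lower bound on the model decrease at a successful step. By the construction in Algorithm~\ref{alg:sublisting} (item (ii) in the discussion following it), the returned radius satisfies $\Delta_k\le\beta\|\nabla M_k(\BFX_k)\|$ (up to the $\Deltatilde_k$ cap, which is irrelevant when $\Delta_k$ is small), so $\|\nabla M_k(\BFX_k)\|\ge\Delta_k/\beta$. Feeding this into the Cauchy decrease Assumption~\ref{assump:cauchy} and using the eventual Hessian bound $\kappa_{bhm}$ from Assumption~\ref{assump:hessian-bound}, I get, for $k$ large on the bad event,
\[
M_k(\BFX_k)-M_k(\BFXtilde_{k+1})\ \ge\ \frac{\kappa_{fcd}}{2}\,\frac{\Delta_k}{\beta}\,\min\Bigl\{\frac{\Delta_k/\beta}{\kappa_{bhm}},\Delta_k\Bigr\}\ \ge\ \kappa_*\,\Delta_k^2
\]
for a constant $\kappa_*>0$. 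On a successful iteration the realized (estimated) decrease satisfies $\Fbar(\BFX_k,N_k)-\Fbar(\BFXtilde_{k+1},\Ntilde_{k+1})\ge\eta_1\bigl(M_k(\BFX_k)-M_k(\BFXtilde_{k+1})\bigr)\ge\eta_1\kappa_*\Delta_k^2$, and on unsuccessful iterations $\Fbar(\BFX_{k+1},N_{k+1})=\Fbar(\BFX_k,N_k)$ since the iterate and sample size are carried over. Summing telescopically over iterations, the partial sums of $\Fbar(\BFX_k,N_k)$ decrease by at least $\eta_1\kappa_*\Delta_k^2$ at each successful step. If there are infinitely many successful steps with $\Delta_k\ge\delta_0$, this forces $\Fbar(\BFX_k,N_k)\to-\infty$ on the bad event, contradicting Lemma~\ref{lem:fhat-bounded}(a). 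If instead successful steps become rare but the radius still fails to vanish, then eventually only unsuccessful steps occur, and each multiplies the radius by $\gamma_2<1$, forcing $\Delta_k\to0$ — again a contradiction. Hence $\mbP\{\Delta_k\not\to0\}=0$.

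The main obstacle I anticipate is handling the interplay between the trust-region radius returned by the model-construction subroutine and the "outer" radius $\Deltatilde_k$: the returned $\Delta_k$ can differ from $\Deltatilde_k$, the contraction loop inside Algorithm~\ref{alg:sublisting} can itself shrink the candidate radius (by factors of $w$), and the bookkeeping of which iterations are successful versus how $\Deltatilde_{k+1}$ is updated must be tracked carefully so that the telescoping sum is legitimate. A second delicate point is that the bound $\Delta_k\le\beta\|\nabla M_k(\BFX_k)\|$ only gives $\|\nabla M_k(\BFX_k)\|$ bounded below by $\Delta_k/\beta$ when the $\Deltatilde_k$-cap in Step~\ref{AMC:update} is not active; I would need to argue that on the bad event (where $\Delta_k\ge\delta_0$ infinitely often) this cap is eventually inactive, or otherwise dispose of that case separately, e.g.\ by observing that repeated expansion steps would violate $\Delta_{\max}$. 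A third, more technical, subtlety is that Assumption~\ref{assump:hessian-bound} only gives an \emph{eventual} almost-sure bound on $\|\nabla^2 M_k(\BFX_k)\|$, so all constants in the decrease estimate are only valid for $k$ large on an almost-sure event; this is fine since we only need the tail behavior, but it must be stated with care so that the contradiction with Lemma~\ref{lem:fhat-bounded}(a) is drawn on the intersection of the relevant full-probability events.
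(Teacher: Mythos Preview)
Your overall strategy---contradiction via unbounded decrease of $\Fbar(\BFX_k,N_k)$, using the Cauchy-decrease bound together with $\Delta_k\le\mu\|\nabla M_k(\BFX_k)\|$ to get $M_k(\BFX_k)-M_k(\BFXtilde_{k+1})\ge\kappa_*\Delta_k^2$ on successful steps---matches the paper. But there is a genuine gap in your telescoping argument: the claim that ``on unsuccessful iterations $\Fbar(\BFX_{k+1},N_{k+1})=\Fbar(\BFX_k,N_k)$ since the iterate and sample size are carried over'' is false. At the start of iteration $k$, the model-construction step (Algorithm~\ref{alg:sublisting}) re-samples at $\BFX_k$, replacing the sample size $N_k$ by a (generally different) $\Ntilde_k$; the success ratio $\rhohat_k$ is then computed with $\Fbar(\BFX_k,\Ntilde_k)$, not $\Fbar(\BFX_k,N_k)$, and on a rejected step one sets $N_{k+1}=\Ntilde_k$. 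Hence $\Fbar(\BFX_{k+1},N_{k+1})=\Fbar(\BFX_k,\Ntilde_k)\neq\Fbar(\BFX_k,N_k)$ in general, and likewise the decrease at a successful step is $\Fbar(\BFX_k,\Ntilde_k)-\Fbar(\BFX_{k+1},N_{k+1})\ge\eta_1\kappa_*\Delta_k^2$, not the version with $N_k$ you wrote. Your telescope therefore leaks a term $B_i:=\Fbar(\BFX_i,\Ntilde_i)-\Fbar(\BFX_i,N_i)$ at every iteration, and without controlling $\sum B_i$ the contradiction does not follow.

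The paper's proof repairs exactly this: it writes $\Fbar(\BFX_k,N_k)=\Fbar(\BFX_1,N_1)+\sum_{i=1}^{k-1}(A_i+B_i)$ with $A_i=\Fbar(\BFX_{i+1},N_{i+1})-\Fbar(\BFX_i,\Ntilde_i)$ and the $B_i$ above, shows $A_i\le-\kappa_{efd}\Delta_i^2$ on successful steps and $A_i=0$ otherwise, and then uses Lemma~\ref{lem:fhat-bounded} (via Borel--Cantelli) to show $|B_i|$ is eventually below any prescribed constant almost surely. Because the $\BFX_i$'s are constant between consecutive successful iterations $k_j<k_{j+1}$, the $B_i$'s over that stretch telescope to a single difference of two estimates at the same point, so one only needs the eventual bound once per cycle; choosing that bound smaller than $\kappa_{efd}\delta^2$ yields a net drop of at least $\tfrac{1}{3}\kappa_{efd}\delta^2$ per cycle and hence $\Fbar(\BFX_k,N_k)\to-\infty$, the desired contradiction. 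Two smaller remarks: (i) the unconditional bound delivered by Algorithm~\ref{alg:sublisting} is $\Delta_k\le\mu\|\nabla M_k(\BFX_k)\|$ (since $\Delta_k\le\max\{\beta\|\nabla M_k\|,\Deltatilde_k w^{j_k-1}\}\le\mu\|\nabla M_k\|$), so your worry about the $\Deltatilde_k$-cap is unfounded and $\beta$ should be $\mu$; (ii) your alternative ``eventually only unsuccessful steps'' branch is correct but already subsumed by the observation that non-vanishing $\Delta_k$ forces infinitely many successes.
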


\begin{proof} We note during the $i$th iteration in Step \ref{ASTRO:construct} of Algorithm \ref{alg:mainlisting} that $\Fbar\left(\BFX_{i},N_{i}\right)$ and $\Fbar\left(\BFX_{i},\Ntilde_{i}\right)$ denote the function estimate at the point $\BFX_{i}$ \emph{before} entering and upon exiting {\tt{AdaptiveModelConstruction}} respectively.
\begin{equation}\label{eq:lim-fbar} \Fbar\left(\BFX_{k},N_{k}\right)=\Fbar\left(\BFX_{1},N_{1}\right)+\sum_{i=1}^{k-1}\left(A_{i}+B_{i}\right)\end{equation} where the summands $A_{i}=\Fbar\left(\BFX_{i+1},N_{i+1}\right)-\Fbar\left(\BFX_{i},\Ntilde_{i}\right)$ and $B_{i}=\Fbar\left(\BFX_{i},\Ntilde_{i}\right)-\Fbar\left(\BFX_{i},N_{i}\right)$. %%%% ADDED 2/2/16
In words $A_i$ represents the reduction in the function estimates during the $i$th iteration and $B_i$ represents the difference between the two estimates of the function at the point $\BFX_{i}$ at the end of iteration $i-1$ and $i$. 
%%%%%
We now make the following observations about $A_i$ and $B_i$. \begin{enumerate} \item[(a)] If $i$ is an unsuccessful iteration, then $A_i = 0$ since $\BFX_i= \BFX_{i+1}$. 
\item[(b)] If $i$ is a successful iteration, we know by definition that $\rhohat_{i}\geq\eta_{1}$. If we denote $\kappa_{efd}=(2\mu)^{-1}\eta_{1}\kappa_{fcd}\min\left\{ \left(\mu\kappa_{bhm}\right)^{-1},1\right\} $, then by Assumptions \ref{assump:cauchy} and \ref{assump:hessian-bound}, and by the assurance in Algorithm \ref{alg:sublisting} that $\Delta_{k}\leq\mu\left\|\nabla M_{k}\left(\BFX_{k}\right)\right\|$, we have
\begin{equation}\label{eq:lb_trust}\begin{aligned} A_{i} & \leq\eta_{1}\left(M_{i}\left(\BFX_{i+1}\right)-M_{i}\left(\BFX_{i}\right)\right)\\
 & \leq-\frac{\eta_{1}}{2}\kappa_{fcd}\left\| \nabla M_{i}\left(\BFX_{i}\right)\right\| \min\left\{ \frac{\left\| \nabla M_{i}\left(\BFX_{i}\right)\right\| }{\left\| \nabla^{2}M_{i}\left(\BFX_{i}\right)\right\| },\Delta_{i}\right\} \\
 & \leq-\kappa_{efd}\Delta_{i}^{2}.\end{aligned}
\end{equation}
%\item[(c)] For any given $c>0$, (\ref{eq:contra_lb}) in the proof of Lemma \ref{lem:fhat-bounded} assures us that $\mbP\left\{ \left|B_{i}\right|>c\mbox{ i.o.}\right\} \leq\mbP\left\{ \left|\Fbar\left(\BFX_{i+1}(\omega),N_{i+1}(\omega)\right)\right|+\left|\Fbar_{b}\left(\BFX_{i+1}(\omega),N_{i+1}(\omega)\right)\right|>c\mbox{ i.o.}\right\} =0.$ This implies that except for a set of measure zero, $\left | B_i\right | \leq c$ for large enough iteration $i$.
\item[(c)] For any given $c>0$, Lemma \ref{lem:fhat-bounded} ensures that $\mbP\left\{ \left|B_{i}\right|>c\ \mbox{i.o.}\right\}=0$ since \[\begin{aligned}
 \mbP\left\{ \left|\Fbar\left(\BFX_{i},\Ntilde_{i}\right)-\Fbar\left(\BFX_{i},N_{i}\right)\right|>c\right\} & \leq\mbP\left\{ \left|\Fbar\left(\BFX_{i},\Ntilde_{i}\right)-f\left(\BFX_{i}\right)\right|>\frac{c}{2}\right\} \\
 & \ +\mbP\left\{ \left|f\left(\BFX_{i}\right)-\Fbar\left(\BFX_{i},N_{i}\right)\right|>\frac{c}{2}\right\}, 
\end{aligned}
\]  using Boole's inequality (see Lemma \ref{lem:boole}). This implies that except for a set of measure zero, $\left | B_i\right | \leq c$ for large enough $i$.\end{enumerate}  

Now suppose $\mcD:=\left\{ \omega:\lim_{k\rightarrow\infty}\Delta_{k}(\omega)\neq0\right\}$ denotes the set of sample-paths for which the trust-region radius does not decay to zero. For contraposition, suppose $\mcD$ has positive measure. Consider a sample-path $\omega_0 \in \mcD$. Since unsuccessful iterations are necessarily contracting iterations, we can find $\delta(\omega_0)>0$ and the sub-sequence of successful iterations $\{k_j\}$ in the sample-path $\omega_0$ such that $\Delta_{k_j}(\omega_0) \geq \delta(\omega_0)$. This implies from observation (b) above that \begin{equation}\label{eq:Asuc} A_{k_j}(\omega_0) \leq -\kappa_{efd}\delta^2(\omega_0).\end{equation} Now, observation (a) above implies that \begin{equation}\label{eq:Aunsuc} A_{k_j+\ell}(\omega_0) \leq 0, \ell = 1, 2, \ldots, k_{j+1} - k_j -1.\end{equation} Also by the observation (c) above, and 
choosing $c = \frac{1}{3}\kappa_{efd}\delta^2(\omega_0)$, we see that for large-enough $i$, 
\begin{equation} \label{eq:Bi}\left|\Fbar\left(\BFX_{i}(\omega_{0}),\Ntilde_{i}(\omega_{0})\right)-\Fbar\left(\BFX_{i}(\omega_{0}),N_{i}(\omega_{0})\right)\right|\leq\frac{2}{3}\kappa_{efd}\delta^{2}(\omega_{0}).\end{equation} 
We then write for large-enough $j$,  
\begin{equation}\label{eq:cyclesum}\begin{aligned}\sum_{\ell=k_{j}}^{k_{j+1}-1}\left(A_{\ell}(\omega_{0})+B_{\ell}(\omega_{0})\right) & =A_{k_{j}}(\omega_{0})+\sum_{\ell=k_{j}}^{k_{j+1}-1}B_{\ell}(\omega_{0})\\
 & \leq A_{k_{j}}(\omega_{0})\\
 & \ \ \ \ +\Fbar\left(\BFX_{k_{j+1}-1}(\omega_{0}),\Ntilde_{k_{j+1}-1}(\omega_{0})\right)\\
 & \ \ \ \ -\Fbar\left(\BFX_{k_{j}+1}(\omega_{0}),N_{k_{j}+1}(\omega_{0})\right)\\
 & \leq-\frac{1}{3}\kappa_{efd}\delta^{2}(\omega_{0}),
\end{aligned}
\end{equation} 
where the first equality follows from observation (a) above, the first inequality follows from the definition of $B_{\ell}$, and the second inequality follows from (\ref{eq:Asuc}) and (\ref{eq:Bi}). The inequality in (\ref{eq:cyclesum}) (and the fact that there is an entire sequence $\{k_j\}$ of successful iterations) means that $\lim_{k \to \infty} \Fbar\left(\BFX_{k}\left(\omega_0\right),N_{k}(\omega_{0})\right) = -\infty$ thus contradicting Lemma \ref{lem:fhat-bounded}. The assertion of the theorem thus holds.\end{proof}

Relying on Theorem \ref{thm:delta-conv}, we now show that the model gradient converges to the true gradient almost surely. This, of course, does not imply that the true gradient itself converges to zero --- a fact that will be established subsequently. Implicit in the proof of Theorem \ref{thm:delta-conv} is the requirement that Algorithm \ref{alg:sublisting} terminates in finite time with probability one, a fact that we establish through Lemma \ref{lem:finite-improvement} in the Appendix.

\begin{lemma}
\label{lem:G-convergence} Let Assumptions \ref{assump:diff-bddbelow}, \ref{assump:var} -- \ref{assump:lambda_lbd} hold. Then $\left\| \nabla M_{k}\left(\BFX_{k}\right)-\nabla f\left(\BFX_{k}\right)\right\| \xrightarrow{wp1}0$ as $k\rightarrow\infty$.
\end{lemma}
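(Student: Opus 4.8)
The plan is to bound $\|\nabla M_k(\BFX_k)-\nabla f(\BFX_k)\|$ by the stochastic interpolation-error estimate of Lemma~\ref{lem:model_error}(ii) and to drive its deterministic and stochastic parts to zero separately; I describe the linear-interpolation case, the quadratic one being identical after adjusting the powers of the radius. Let $r_k := \Deltatilde_k w^{j_k-1}$ be the radius of the ball on which the accepted model $M_k$ is interpolated (finite and positive with probability one by the inner-loop termination result, Lemma~\ref{lem:finite-improvement}, and satisfying $r_k \le \Delta_k$ by Step~\ref{AMC:update}), and let $E_i$ denote the sampling error $\Fbar(\BFY_i^{(j_k)},N(\BFY_i^{(j_k)}))-f(\BFY_i^{(j_k)})$ at the $i$-th poised point. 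Lemma~\ref{lem:model_error}(ii), applied with $\Delta=r_k$, gives
\[
\|\nabla M_k(\BFX_k)-\nabla f(\BFX_k)\| \;\le\; \kappa_{egL1} r_k + \kappa_{egL2}\,\frac{\sqrt{\sum_{i=2}^{d+1}(E_i-E_1)^2}}{r_k} \;\le\; \kappa_{egL1}\Delta_k + 2\sqrt{d}\,\kappa_{egL2}\,\frac{\max_{i}|E_i|}{r_k}.
\]
The first term $\to 0$ with probability one by Theorem~\ref{thm:delta-conv}, so it suffices to prove $\max_i|E_i|/r_k \xrightarrow{wp1} 0$, and since only finitely many interpolation points are involved, a union bound reduces this to showing, for each fixed point-index $i$ and each rational $c>0$, that $\mbP\{|E_i|/r_k > c\ \mathrm{i.o.}\}=0$.

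For this I would condition, separately at each potential contraction step $j$ of Algorithm~\ref{alg:sublisting}, on the $\sigma$-field $\mcF_{k,j}$ generated by the history up to the selection of the $j$-th poised set in $\mcB(\BFX_k;\Deltatilde_k w^{j-1})$ but before its replications are drawn. On $\{j_k\ge j\}$ the radius $\rho:=\Deltatilde_k w^{j-1}$ and the event itself are $\mcF_{k,j}$-measurable, and by Assumption~\ref{assump:var} the replications at $\BFY_i^{(j)}$ are conditionally i.i.d.\ with the required moments; the stopping rule \eqref{eq:inner-sampling-interpolation} then has exactly the form of Theorem~\ref{thm:meanasylim} with $\lambda=\lambda_k$, $\gamma=1$, and the ($\mcF_{k,j}$-measurable) constant $\kappa=\kappa_{ias}\rho^2$, so part~(iv) of that theorem, together with the uniform $4v$-th-moment bound of Assumption~\ref{assump:var}, yields (for all $k$ beyond some $k_0$, uniformly in $j$) $\mathbb{E}[E_i^2\mid\mcF_{k,j}]\le 2\kappa_{ias}^2\rho^4\lambda_k^{-1}$ on $\{j_k\ge j\}$. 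Conditional Chebyshev gives $\mbP\{|E_i^{(j)}|>c\rho\mid\mcF_{k,j}\}\le 2\kappa_{ias}^2 c^{-2}\rho^2\lambda_k^{-1}$ there; using $\{|E_i|/r_k>c\}\subseteq\bigcup_{j\ge1}\big(\{j_k\ge j\}\cap\{|E_i^{(j)}|>c\,\Deltatilde_k w^{j-1}\}\big)$, taking expectations, bounding $\Deltatilde_k\le\Delta_{\max}$, and summing the geometric series $\sum_{j\ge1}w^{2(j-1)}=(1-w^2)^{-1}$ gives, for $k\ge k_0$,
\[
\mbP\{|E_i|/r_k>c\} \;\le\; \frac{2\kappa_{ias}^2\Delta_{\max}^2}{c^2(1-w^2)}\,\frac{1}{\lambda_k}.
\]
This is summable in $k$ by Assumption~\ref{assump:lambda_lbd}, so Borel--Cantelli's first lemma (Lemma~\ref{lem:borel_cantelli}) gives $\mbP\{|E_i|/r_k>c\ \mathrm{i.o.}\}=0$; intersecting over rational $c\downarrow 0$ and over the finitely many point-indices completes the linear case, and the quadratic case is the same with $(d+1)(d+2)/2$ points, $\rho^2$ replacing $\rho$ in the deterministic term, and $\kappa_{egQ1},\kappa_{egQ2}$ in place of $\kappa_{egL1},\kappa_{egL2}$.

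The main obstacle is that the interpolation radius $r_k=\Deltatilde_k w^{j_k-1}$ is itself random and \emph{not} $\mcF_k$-measurable --- the number $j_k$ of contraction steps is decided by the replications drawn inside Algorithm~\ref{alg:sublisting} --- so Theorem~\ref{thm:meanasylim}(iv) cannot be invoked directly ``at the accepted model''; one must condition at every contraction step, where the radius is measurable, and then control all steps simultaneously, the geometric factor $w^{j-1}$ being precisely what makes $\sum_j\mbP\{\cdot\}$ converge \emph{uniformly in $k$} and keeps the per-iteration failure probability at order $\lambda_k^{-1}$. A subsidiary technical point is that the asymptotic equivalence in Theorem~\ref{thm:meanasylim}(iv) must be made to hold uniformly over the (random but bounded) value of the conditioning constant $\kappa_{ias}\rho^2$, which is where the uniform higher-moment hypothesis in Assumption~\ref{assump:var} is used.
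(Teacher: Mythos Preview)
Your approach is essentially the same as the paper's: both invoke Lemma~\ref{lem:model_error}(ii) to split $\|\nabla M_k(\BFX_k)-\nabla f(\BFX_k)\|$ into a deterministic part (driven to zero by Theorem~\ref{thm:delta-conv}) and a stochastic part $\sum_i|E_{k,i}^{(j_k)}-E_{k,1}^{(j_k)}|/(\Deltatilde_k w^{j_k-1})$, then control the latter by conditional Chebyshev together with Theorem~\ref{thm:meanasylim}(iv), obtain a per-iteration failure probability of order $\lambda_k^{-1}$, and finish with Borel--Cantelli.

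The one substantive difference is your treatment of the conditioning. The paper conditions on $\mcF_k$ and applies Theorem~\ref{thm:meanasylim}(iv) with $\kappa=\kappa_{ias}(\Deltatilde_k w^{j_k-1})^2$, arriving at the bound $8(p-1)^3c^{-2}(1+\delta)\kappa_{ias}^2\Delta_k^2\lambda_k^{-1}$; but, as you correctly observe, $j_k$ is \emph{not} $\mcF_k$-measurable (it depends on replications drawn inside Algorithm~\ref{alg:sublisting}), so this step is informal. Your remedy --- conditioning separately on $\mcF_{k,j}$ at each contraction step $j$, where the radius $\Deltatilde_k w^{j-1}$ \emph{is} measurable, and then summing the resulting geometric series $\sum_j w^{2(j-1)}$ --- is a cleaner way to make the same bound rigorous, at the cost of an extra factor $(1-w^2)^{-1}$ in the constant. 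The uniformity caveat you flag for Theorem~\ref{thm:meanasylim}(iv) (the ``$\sim$'' must hold uniformly over the $\mcF_{k,j}$-measurable value of $\kappa_{ias}\rho^2$) is real and applies equally to the paper's version of the argument; both proofs implicitly rely on it.
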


\begin{proof} %We assume that the stochastic model $M_{k}$ constructed via Algorithm \ref{alg:sublisting} terminates in finite time. 
In Step \ref{AMC:sample_set} of Algorithm \ref{alg:sublisting}, $\Deltatilde_{k}w^{j_{k}-1}$ denotes the trust-region radius over which the model is constructed. (Note that due to Step \ref{AMC:update} of Algorithm \ref{alg:sublisting}, $\Deltatilde_{k}w^{j_{k}-1}$ may or may not equal the ending trust-region radius $\Delta_k$ upon completion of $k$ iterations of ASTRO-DF.) Then, we know from part (ii) of Lemma \ref{lem:model_error} that 
$$\left\| \nabla M_{k}\left(\BFX_{k}\right)-\nabla f\left(\BFX_{k}\right)\right\| \leq\kappa_{1}\left(\Deltatilde_{k}w^{j_{k}-1}\right)^{\theta}+\kappa_{2}\frac{\sqrt{\sum_{i=2}^{p}\left(E_{k,i}^{(j_{k})}-E_{k,1}^{(j_{k})}\right)^{2}}}{\left(\Deltatilde_{k}w^{j_{k}-1}\right)},$$ where 
%$E_{k,1}=\Fbar\left(\BFX_{k},N\left(\BFX_{k}\right)\right)-f\left(\BFX_{k}\right)$ is the error of the sampled function estimate at the center point of the trust-region, and 
$E_{k,i}^{(j_{k})}=\Fbar\left(\BFY_{i}^{(j_{k})},N\left(\BFY_{i}^{(j_{k})}\right)\right)-f\left(\BFY_{i}^{(j_{k})}\right)$ for $i=1,\ldots,p$ denotes the error due to sampling at point $\BFY_{i}^{(j_{k})}$ after the $j_{k}$th iteration of the contraction loop. (Recall that $\BFY_{1}^{(j_{k})}=\BFX_{k}$;  $p=d+1$ and $\theta=1$ in the linear interpolation models, and $p=(d+1)(d+2)/2$ and $\theta=2$ in the quadratic interpolation models. For the quantities $\kappa_1$ and $\kappa_2$ refer to part (ii) of Lemma \ref{lem:model_error}.)

From Theorem \ref{thm:delta-conv}, $\Delta_{k}\xrightarrow{wp1}0$ as $k \to \infty$, and hence, $\Deltatilde_{k}w^{j_{k}-1}\xrightarrow{wp1}0$ as $k \to \infty$. Also, $\sqrt{\sum_{i=2}^{p}\left(E_{k,i}^{(j_{k})}-E_{k,1}^{(j_{k})}\right)^{2}} \leq\sum_{i=2}^{p}\sqrt{\left(E_{k,i}^{(j_{k})}-E_{k,1}^{(j_{k})}\right)^{2}}=\sum_{i=2}^{p}\left|E_{k,i}^{(j_{k})}-E_{k,1}^{(j_{k})}\right|.$ Considering these two observations, it suffices to show that as $k \to \infty$, \begin{equation} \label{eq:mod_samp_err}\left(\Deltatilde_{k}w^{j_{k}-1}\right)^{-1}\sum_{i=2}^{p}\left|E_{k,i}^{(j_{k})}-E_{k,1}^{(j_{k})}\right| \xrightarrow{wp1}0.\end{equation} Towards this, we write for $c>0$, large enough $k$, some $\delta>0$, and by Boole's inequality (Lemma \ref{lem:boole}),
\begin{align}\label{eq:mod_err_bd} 
\mbP\left\{ \frac{\sum_{i=2}^{p}\left|E_{k,i}^{(j_{k})}-E_{k,1}^{(j_{k})}\right|}{\left(\Deltatilde_{k}w^{j_{k}-1}\right)}\geq c\right\} &\leq\sum_{i=2}^{p}\mathbb{E}\left[\mbP\left\{ \left|E_{k,i}^{(j_{k})}-E_{k,1}^{(j_{k})}\right|\geq \frac{c\left(\Deltatilde_{k}w^{j_{k}-1}\right)}{p-1}\,\vert\,\mcF_{k}\right\} \right]\nonumber \\
&\leq\sum_{i=2}^{p}\left(\mathbb{E}\left[\mbP\left\{ \left|E_{k,i}^{(j_{k})}\right|\geq\frac{c\left(\Deltatilde_{k}w^{j_{k}-1}\right)}{2(p-1)} \,\vert\,\mcF_{k}\right\}\right]\right.\nonumber \\
&\ \ \ \ \ \ \ \ \ \left.+\mathbb{E}\left[\mbP\left\{ \left|E_{k,1}^{(j_{k})}\right|\geq\frac{c\left(\Deltatilde_{k}w^{j_{k}-1}\right)}{2(p-1)} \,\vert\,\mcF_{k}\right\}\right]\right)\nonumber \\
&\leq2(p-1)^{3}4\left(c\Deltatilde_{k}w^{j_{k}-1}\right)^{-2}(1+\delta)\kappa_{ias}^{2}(\Deltatilde_{k}w^{j_{k}-1})^{4}\lambda_{k}^{-1}\nonumber \\
&\leq8(p-1)^{3}c^{-2}(1+\delta)\kappa_{ias}^{2}\Delta_{k}^{2}\lambda_{k}^{-1},
\end{align} where the second inequality above follows from the application of Boole's inequality (see Lemma~\ref{lem:boole}), and the penultimate inequality above follows from arguments identical to those leading to (\ref{eq:contra_lb}) in the proof of Lemma \ref{lem:fhat-bounded} after using the adaptive sample size expression in  (\ref{eq:inner-sampling-interpolation}). Since the right-hand side of (\ref{eq:mod_err_bd}) is summable, we can invoke the first Borel-Cantelli lemma~\cite{bil95} to conclude that (\ref{eq:mod_samp_err}) holds.
\end{proof}

We now show that for large enough iteration $k$, the steps within ASTRO-DF are always successful with probability one. This result is important in that it implies that the model gradient and the trust-region radius will remain in lock-step for large $k$, almost surely. The proof proceeds by dividing the model error into three components, each of which is shown to be controlled with probability one. 

\begin{theorem}
\label{thm:finite-unsuccess} Let Assumptions \ref{assump:diff-bddbelow} -- \ref{assump:lambda_lbd} hold. Then $\mbP\left\{ \rhohat_{k}<\eta_1, \rm{ i.o.}\right\} =0$ for any $\eta_1 \in\left(0,1\right)$.
\end{theorem}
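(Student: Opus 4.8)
The plan is to show that the success ratio $\rhohat_k$ exceeds $\eta_1$ for all large $k$ almost surely by controlling the deviation $|\rhohat_k - 1|$. Writing
\[
\rhohat_k - 1 = \frac{\left(\Fbar(\BFX_k,\Ntilde_k) - \Fbar(\BFXtilde_{k+1},\Ntilde_{k+1})\right) - \left(M_k(\BFX_k) - M_k(\BFXtilde_{k+1})\right)}{M_k(\BFX_k) - M_k(\BFXtilde_{k+1})},
\]
the numerator is bounded (via the triangle inequality) by the sum of three error terms: $|\Fbar(\BFX_k,\Ntilde_k) - f(\BFX_k)|$, $|\Fbar(\BFXtilde_{k+1},\Ntilde_{k+1}) - f(\BFXtilde_{k+1})|$, and $|f(\BFX_k) - M_k(\BFX_k)| + |f(\BFXtilde_{k+1}) - M_k(\BFXtilde_{k+1})|$, where the last pair I would further split through the deterministic fully-linear/fully-quadratic model $m_k$ as in the decomposition (\ref{eq:err_decomp}). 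For the denominator, the Cauchy decrease (Assumption \ref{assump:cauchy}), the Hessian bound (Assumption \ref{assump:hessian-bound}), and the lock-step $\Delta_k \le \mu\|\nabla M_k(\BFX_k)\|$ guaranteed by Algorithm \ref{alg:sublisting} give a lower bound of order $\Delta_k^2$, precisely $M_k(\BFX_k) - M_k(\BFXtilde_{k+1}) \ge \kappa_{efd}\Delta_k^2$ with $\kappa_{efd}$ as in the proof of Theorem \ref{thm:delta-conv}. So it suffices to show that each of the numerator error terms is $o(\Delta_k^2)$ with probability one, i.e., that for every $c > 0$ each term is $\le c\Delta_k^2$ for all large $k$, a.s.

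First I would dispatch the two sampling errors. The term $|\Fbar(\BFX_k,\Ntilde_k) - f(\BFX_k)|$ is handled exactly by Lemma \ref{lem:fhat-bounded}(b): $\mbP\{|\Fbar(\BFX_k,N_k) - f(\BFX_k)| \ge c\Delta_k^2 \text{ i.o.}\} = 0$, and the same argument (applied with the sample size $\Ntilde_k$, which satisfies the same adaptive rule with radius $\Delta_k$) gives the analogous statement for $\Ntilde_k$. For the candidate-point sampling error $|\Fbar(\BFXtilde_{k+1},\Ntilde_{k+1}) - f(\BFXtilde_{k+1})|$, I would rerun the Chebyshev + Borel–Cantelli argument of Lemma \ref{lem:fhat-bounded}(b) using the outer adaptive sample size (\ref{eq:outer-sampling}): conditioning on $\mcF_k$, part (iv) of Theorem \ref{thm:meanasylim} gives $\mathbb{E}[(\Fbar(\BFXtilde_{k+1},\Ntilde_{k+1}) - f(\BFXtilde_{k+1}))^2 \mid \mcF_k] \sim \kappa_{oas}^2\Delta_k^4\lambda_k^{-1}$, so $\mbP\{|\Fbar(\BFXtilde_{k+1},\Ntilde_{k+1}) - f(\BFXtilde_{k+1})| \ge c\Delta_k^2\} \le c^{-2}(1+\delta)\kappa_{oas}^2\lambda_k^{-1}$, which is summable by Assumption \ref{assump:lambda_lbd}; Borel–Cantelli finishes it.

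Next I would handle the model error $|f(\BFXtilde_{k+1}) - M_k(\BFXtilde_{k+1})|$ (and likewise at $\BFX_k$, where it is simpler since $\BFXtilde_{k+1} = \BFX_k + \BFS_k$ with $\|\BFS_k\| \le \Delta_k$, so both points lie in the trust region). I split it as $|f - m_k| + |m_k - M_k|$ on $\mcB(\BFX_k;\Delta_k)$. The deterministic part $|f(\BFz) - m_k(\BFz)| \le \kappa_{ef}\Delta_k^2$ (linear) or $\le \kappa_{ef}\Delta_k^3$ (quadratic) by the fully-linear/fully-quadratic property of the limiting interpolation model; either way it is $O(\Delta_k^2)$ — and here I need to be careful about the constant, since I need $o(\Delta_k^2)$, not merely $O(\Delta_k^2)$. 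This is the delicate point: the bare fully-linear bound $\kappa_{ef}\Delta_k^2$ does not go to zero faster than $\Delta_k^2$. I expect the resolution to use the construction of Algorithm \ref{alg:sublisting} together with Lemma \ref{lem:G-convergence}: on successful-so-far paths $\Delta_k \le \beta\|\nabla M_k(\BFX_k)\|$ with $\beta < \mu$, and combining this with the bound $\Delta_k \le \mu\|\nabla M_k(\BFX_k)\|$ and a finer version of the Cauchy decrease (the min in (\ref{eq:cauchy}) eventually being achieved by $\Delta_k$, since $\|\nabla M_k(\BFX_k)\|/\|\nabla^2 M_k(\BFX_k)\| \ge \Delta_k/(\mu\kappa_{bhm})$), the denominator is bounded below by $\frac{\kappa_{fcd}}{2\mu}\min\{(\mu\kappa_{bhm})^{-1},1\}\|\nabla M_k(\BFX_k)\|\Delta_k$. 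The crux is then to show the full error bundle divided by $\|\nabla M_k(\BFX_k)\|\Delta_k$ tends to zero: the model term $\kappa_{ef}\Delta_k^2 / (\beta^{-1}\Delta_k \cdot \Delta_k) = \beta\kappa_{ef}$ is still only bounded, so I must instead choose the fitness test parameters ($\eta_1 < 1$) so that a constant-order relative error is tolerated — i.e. show $\limsup_k |\rhohat_k - 1| \le $ (some explicit constant determined by $\kappa_{ef}, \kappa_{fcd}, \kappa_{bhm}, \mu, \beta$) and then argue this constant can be made $< 1 - \eta_1$ by the choice of algorithmic constants, OR (cleaner) invoke that the \emph{stochastic} contribution vanishes relatively while the deterministic fully-linear contribution is controlled by the standard deterministic trust-region argument which shows that when $\rhohat_k < \eta_1$ the radius contracts, forcing (via Theorem \ref{thm:delta-conv} already in hand) the model-gradient/radius ratio into the regime where Cauchy decrease dominates $\kappa_{ef}\Delta_k^2$. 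The stochastic interpolation part $|m_k(\BFz) - M_k(\BFz)| \le p\Lambda \max_i|\Fbar(\BFY_i,N(\BFY_i)) - f(\BFY_i)|$ by Lemma \ref{lem:model_error}(i), and each $|\Fbar(\BFY_i,N(\BFY_i)) - f(\BFY_i)|$ is $o(\Delta_k^2)$ a.s. by the same Chebyshev/Borel–Cantelli argument as above applied to the inner sampling rule (\ref{eq:inner-sampling-interpolation}) — this is essentially already contained in the proof of Lemma \ref{lem:G-convergence}.

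\textbf{Main obstacle.} The hardest part, as flagged above, is reconciling the $O(\Delta_k^2)$ (not $o(\Delta_k^2)$) deterministic model error $\kappa_{ef}\Delta_k^2$ against a denominator that, when $\|\nabla M_k\|$ is comparable to $\Delta_k$, is also only order $\Delta_k^2$. The standard deterministic fix is the interplay with the trust-region contraction: an unsuccessful step shrinks $\Delta_k$, but by Algorithm \ref{alg:sublisting} the model is reconstructed on a radius tied to $\|\nabla M_k\|$, so one shows that repeated failure is impossible unless $\|\nabla M_k(\BFX_k)\| \to 0$ — and then one must separately show $\|\nabla M_k\| \to 0$ cannot happen along a failing subsequence while $\Delta_k \ge \delta > 0$, which is exactly ruled out once $\Delta_k \xrightarrow{wp1} 0$ (Theorem \ref{thm:delta-conv}) is combined with $\Delta_k \le \mu\|\nabla M_k\|$ only in one direction. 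I anticipate the actual argument picks $\eta_1$ relative to the fully-linear constants, or exploits the lower bound $\Delta_k \le \beta\|\nabla M_k\|$ with $\beta$ chosen small enough in the algorithm, so that $\kappa_{ef}\Delta_k^2 \le \kappa_{ef}\beta\|\nabla M_k\|\Delta_k \le $ (something) $\times (1-\eta_1)\times(\text{denominator})$; getting this bookkeeping exactly right — and verifying the implied constraint on $\beta$ is consistent with $0 < \beta < \mu$ — is where the real work lies. Everything else reduces to the Chebyshev-conditional-on-$\mcF_k$ plus Borel–Cantelli template already used three times in the preceding proofs.
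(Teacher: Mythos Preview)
Your overall architecture matches the paper's: bound $|\rhohat_k-1|$ using the Cauchy-decrease lower bound $M_k(\BFX_k)-M_k(\BFXtilde_{k+1})\ge\kappa_{md}\Delta_k^2$ for the denominator, split the numerator into sampling error, deterministic model error, and stochastic interpolation error, then dispatch the stochastic pieces by Chebyshev-conditional-on-$\mcF_k$ plus Borel--Cantelli and Lemma~\ref{lem:model_error}(i). Two points, however, deserve comment.

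First, you are working harder than necessary because you missed one structural fact: $M_k$ is an \emph{interpolation} model on $\mcY_k\ni\BFX_k$, so $M_k(\BFX_k)=\Fbar(\BFX_k,\Ntilde_k)$ exactly. The paper uses this (it states ``$\Fbar(\BFX_k,\Ntilde_k)=M_k(\BFX_k)$'' just before (\ref{eq:error_split})), which collapses the numerator of $\rhohat_k-1$ to the single term $M_k(\BFXtilde_{k+1})-\Fbar(\BFXtilde_{k+1},\Ntilde_{k+1})$. This removes the terms $|\Fbar(\BFX_k,\Ntilde_k)-f(\BFX_k)|$ and $|f(\BFX_k)-M_k(\BFX_k)|$ from your decomposition entirely, leaving exactly the three errors $Err_1,Err_2,Err_3$ at the \emph{candidate} point only.

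Second, on your ``main obstacle'': you correctly spot that the deterministic model error $|f(\BFXtilde_{k+1})-m_k(\BFXtilde_{k+1})|\le\kappa_{ef}\Delta_k^2$ is only $O(\Delta_k^2)$, not $o(\Delta_k^2)$, and you correctly guess the resolution in your last paragraph. The paper does precisely that: it sets $\eta'=\tfrac{1}{3}(1-\eta_1)\kappa_{md}$ and simply \emph{requires} $\eta'>\kappa_{ef}$, i.e.\ the algorithmic constants $(\eta_1,\mu,\kappa_{fcd},\kappa_{bhm},\kappa_{ef})$ are assumed compatible so that the deterministic error never exceeds $\eta'\Delta_k^2$. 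There is no asymptotic argument, no appeal to $\beta<\mu$, and no attempt to get $o(\Delta_k^2)$; it is a one-line constant comparison. So your detour through $\|\nabla M_k\|\Delta_k$ lower bounds and contraction dynamics is unnecessary --- though you do arrive at the right answer by the end of your proposal. (As you may have sensed, this means the theorem's ``for any $\eta_1\in(0,1)$'' is really ``for $\eta_1$ small enough relative to the model constants''; the paper's proof tacitly imposes this.)
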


\begin{proof}
%Note that constants $\kappa_{fcd}$ associated with factor of Cauchy decrease in Assumption \ref{assump:cauchy} and $\kappa_{bhm}$ associated with the upper bound of the model Hessian in Assumption \ref{assump:hessian-bound} are not fixed. 
At the end of Step \ref{AMC:end_loop} of Algorithm \ref{alg:sublisting}, let $m_{k}^{(j_{k})}\left(\BFz\right)$ be the interpolation model of $f$ constructed on the poised set $\mcY_{k}$. (Of course, we cannot construct $m_k(\cdot)$ explicitly because the true function values are unknown.)
%$=\mathbb{E}\left[M_{k}^{\left(j\right)}\left(\BFz\right)\right]$. 
Then $m_{k}^{(j_{k})}\left(\BFz\right)$ is a $\left(\kappa_{ef},\kappa_{eg}\right)$-fully-linear model of $f$ on $\mcB\left(\BFX_{k};\Deltatilde_{k}w^{j_{k}-1}\right)$ and since  $\Delta_{k}\geq\Deltatilde_{k}w^{j_{k}-1}$, by the Lemma in \cite[p. 200]{conschvic2009} we have that $m_{k}\left(\cdot\right)$ is a $\left(\kappa_{ef},\kappa_{eg}\right)$-fully-linear model of $f$ on $\mcB\left(\BFX_{k};\Delta_{k}\right)$. In addition, Algorithm \ref{alg:sublisting} ensures that $\Delta_{k}\leq\mu\left\| \nabla M_{k}\left(\BFX_{k}\right)\right\|$.

Assumption \ref{assump:cauchy} on the Cauchy decrease in the minimization problem implies that
\begin{align}\label{eq:mindecrease} M_{k}\left(\BFX_{k}\right)-M_{k}\left(\BFXtilde_{k+1}\right) & \geq\frac{\kappa_{fcd}}{2}\left\| \nabla M_{k}\left(\BFX_{k}\right)\right\| \min\left\{ \frac{\left\| \nabla M_{k}\left(\BFX_{k}\right)\right\| }{\left\| \nabla^2 M_{k}\left(\BFX_{k}\right)\right\| },\Delta_{k}\right\} \nonumber \\
 & \geq\frac{\kappa_{fcd}}{2}\left\| \nabla M_{k}\left(\BFX_{k}\right)\right\| \min\left\{ \frac{\Delta_{k}}{\mu\kappa_{bhm}},\Delta_{k}\right\} \nonumber \\
 & \geq \kappa_{md} \Delta_{k}^2.
% & =\frac{\kappa_{fcd}}{2}\min\left\{ \frac{1}{\mu\kappa_{bhm}},1\right\} \left\| \nabla M_{k}\left(\BFX_{k}\right)\right\| \Delta_{k} \geq\kappa_{md}\Delta_{k}^{2}.
\end{align} where $\kappa_{md}=(2\mu\kappa_{bhm})^{-1}\mbox{min}(\mu\kappa_{bhm}^{-1},1)\kappa_{fcd}$. %Let $\eta=\eta_1$ be the one chosen in Step 0 of Algorithm \ref{alg:mainlisting}. 
Recall that $$\rhohat_{k} \defn \frac{\Fbar\left(\BFX_{k},\Ntilde_{k}\right)-\Fbar\left(\BFXtilde_{k+1},\Ntilde_{k+1}\right)}{M_{k}(\BFX_{k})-M_{k}(\BFXtilde_{k+1})}$$ and that $\Fbar\left(\BFX_{k},\Ntilde_{k}\right)=M_{k}\left(\BFX_{k}\right)$. Now using Boole's inequality (see Lemma \ref{lem:boole}) and (\ref{eq:mindecrease}), we can write
\begin{align}\label{eq:error_split} \mbP\left\{ \rhohat_{k}<\eta_{1}\right\} &\leq\mbP\left\{ \left|1-\rhohat_{k}\right|\geq1-\eta_{1}\right\} \nonumber \\
% & \leq\mbP\left\{ \frac{\left|\Fbar\left(\BFXtilde_{k+1},\Ntilde_{k+1}\right)-M_{k}\left(\BFXtilde_{k+1}\right)\right|+\left|\Fbar\left(\BFX_{k},N_{k}\right)-M_{k}\left(\BFX_{k}\right)\right|}{\left|M_{k}(\BFX_{k})-M_{k}(\BFXtilde_{k+1})\right|}\geq1-\eta_{1}\right\} \nonumber \\
 &\leq\mbP\left\{ \left|\Fbar\left(\BFXtilde_{k+1},\Ntilde_{k+1}\right)-M_{k}\left(\BFXtilde_{k+1}\right)\right|\geq\left(1-\eta_{1}\right)\kappa_{md}\Delta_{k}^{2}\right\} \nonumber \\
 &\leq \mbP\left\{ Err_{1} \geq\eta'\Delta_{k}^{2}\right\} +\mbP\left\{ Err_{2}\geq\eta'\Delta_{k}^{2}\right\} + \mbP\left\{ Err_{3} \geq\eta'\Delta_{k}^{2}\right\},\end{align} where $Err_{1} \defn \left|\Fbar\left(\BFXtilde_{k+1},\Ntilde_{k+1}\right)-f\left(\BFXtilde_{k+1}\right)\right|$, $Err_{2} \defn \left|f\left(\BFXtilde_{k+1}\right)-m_{k}\left(\BFXtilde_{k+1}\right)\right|$, $Err_{3} \defn \left|m_{k}\left(\BFXtilde_{k+1}\right)-M_{k}\left(\BFXtilde_{k+1}\right)\right|$, and $\eta'=3^{-1}\left(1-\eta_{1}\right)\kappa_{md}$. In what follows, we establish $\mbP\left\{ \rhohat_{k}<\eta_{1}\:\mathrm{ i.o.}\right\} =0$ by demonstrating that each of the errors $Err_{1}$, $Err_{2}$ and $Err_{3}$ exceeding $\eta'\Delta_{k}^{2}$ infinitely often has probability zero.

We first analyze the stochastic sampling error probability
$\mbP\left\{ Err_{1} \geq\eta'\Delta_{k}^{2}\right\} $ appearing on the right-hand side of (\ref{eq:error_split}). Using arguments identical to those leading to (\ref{eq:contra_lb}) in the proof of Lemma \ref{lem:fhat-bounded}, it is seen that \begin{equation}\label{eq:err1} \mbP\left\{ \left|\Fbar\left(\BFXtilde_{k+1},\Ntilde_{k+1}\right)-f\left(\BFXtilde_{k+1}\right)\right|\geq\eta'\Delta_{k}^{2}\ \ \mathrm{i.o.}\right\} =0.\end{equation} 

Next we analyze the deterministic model error probability $\mbP\left\{ Err_2 \geq\eta'\Delta_{k}^{2}\right\} $ appearing on the right-hand side of (\ref{eq:error_split}). Since we know from the postulates of the theorem that $m_k\left(\BFz\right)$ is a $\left(\kappa_{ef},\kappa_{eg}\right)$-fully-linear model of $f$ on $\mcB\left(\BFX_{k};\Delta_{k}\right)$, implying that if $\eta_1$ is chosen so that $\eta' = \frac{1}{3}(1-\eta_1)\kappa_{md} >\kappa_{ef}$, we have \begin{equation}\label{eq:err2}\mbP\left\{ \left|f\left(\BFXtilde_{k+1}\right)-m_{k}\left(\BFXtilde_{k+1}\right)\right|\geq\eta'\Delta_{k}^{2}\ \ \mathrm{ i.o.}\right\}=0.\end{equation}

Finally, we analyze the stochastic interpolation error probability $\mbP\left\{ Err_{3} \geq\eta'\Delta_{k}^{2}\right\} $ appearing on the right-hand side of (\ref{eq:error_split}). %Recall $p=d+1$ for linear interpolation. 
Using part (i) of Lemma \ref{lem:model_error} and relabeling $\BFX_{k}$ to 
$\BFY_{1}$ for readability, we write
\begin{align}\label{eq:stoch_error}\mbP\left\{ Err_{3}>\eta' \Delta_k^2 \right\}  & \leq \mbP\left\{ \max_{\substack{\BFY_{i}\in\mcY_{k},\\i=1,2,\ldots,p}}\left|\Fbar\left(\BFY_{i},N\left(\BFY_{i}\right)\right)-f\left(\BFY_{i}\right)\right|>\frac{\eta'\Delta_k^2}{p\Lambda}\right\} \nonumber \\
& \leq\sum_{i=1}^{p}\mbP\left\{ \left|\Fbar\left(\BFY_{i},N\left(\BFY_{i}\right)\right)-f\left(\BFY_{i}\right)\right|>\frac{\eta' \Delta_k^2}{p^2\Lambda} \right\} \nonumber \\
& = \sum_{i=1}^{p}\mathbb{E}\left[\mbP\left\{ \left|\Fbar\left(\BFY_{i},N\left(\BFY_{i}\right)\right)-f\left(\BFY_{i}\right)\right|>\frac{\eta' \Delta_k^2}{p^2\Lambda} \, \vert \, \mcF_k\right\}\right].
\end{align} 
Now using (\ref{eq:stoch_error}) and arguments identical to those leading to (\ref{eq:contra_lb}) in the proof of Lemma \ref{lem:fhat-bounded} (and the sample size expression (\ref{eq:inner-sampling-interpolation}) in Step \ref{AMC:estimate} of Algorithm \ref{alg:sublisting}), we can then say for large enough $k$ and some $\delta > 0$ that \begin{align}\label{eq:err3_bound}\mbP\left\{ Err_3>\eta' \Delta_k^2 \right\} & \leq \frac{p^{5}\Lambda}{\lambda_k(\eta'\Delta_k^{2})^{2}}(1+\delta)\kappa_{ias}^2\Delta_k^4 \nonumber \\
& \leq \frac{p^{5}\Lambda}{\lambda_k\eta'^{2}}(1+\delta)\kappa_{ias}^2.
\end{align} Since $\lambda_k$ is chosen so that $k^{1+\epsilon} = \mcO(\lambda_k)$ for some $\epsilon > 0$, we see that (\ref{eq:err3_bound}) implies that $\mbP\left\{ \left(\Fbar\left(\BFY_{i},N\left(\BFY_{i}\right)\right)-f\left(\BFY_{i}\right)\right)>\eta'\Delta_{k}^{2} \textrm{ i.o.} \right\} = 0$ by Borel-Cantelli. This in turn implies from (\ref{eq:stoch_error}) that \begin{equation}\label{eq:err3}\mbP\left\{ \left|M_{k}\left(\BFXtilde_{k+1}\right)-m_{k}\left(\BFXtilde_{k+1}\right)\right|\geq\eta'\Delta_{k}^{2} \textrm{ i.o.} \right\} = 0.\end{equation}

Conclude from (\ref{eq:err1}), (\ref{eq:err2}), and (\ref{eq:err3}) that each of the errors $Err_{1}$, $Err_{2}$, and $Err_{3}$ exceeding $\eta'\Delta_k^2$ infinitely often has probability zero and the assertion of Theorem \ref{thm:finite-unsuccess} holds.
\end{proof}

\begin{lemma}
\label{lem:bounds} 
%If there exists a constant $\kappa_{lbg}>0$, such that $\left\| \nabla M_{k}\left(\BFX_{k}\right)\right\| \geq\kappa_{lbg}$ for large $k$, then there exists a constant $\kappa_{lbd}>0$ such that $\Delta_{k}\geq\kappa_{lbd}$ for large $k$ with probability one.
%% EDIT: 7/13/16
For any sample path $\omega\in\Omega$ if there exists a constant $\kappa_{lbg}(\omega)>0$, such that $\left\| \nabla M_{k}\left(\BFX_{k}(\omega)\right)\right\| \geq\kappa_{lbg}(\omega)$ for large enough $k$, then there exists a constant $\kappa_{lbd}(\omega)>0$ such that $\Delta_{k}(\omega)\geq\kappa_{lbd}(\omega)$ for large enough $k$.
\end{lemma}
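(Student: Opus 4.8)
The plan is to reduce the statement to a deterministic recursion for the \emph{candidate} trust-region radius $\Deltatilde_k$, driven only by the algorithm's update rules together with the fact --- already established in Theorem \ref{thm:finite-unsuccess} --- that iterations are eventually successful. First I would fix a sample path $\omega$ satisfying the hypothesis; by Theorem \ref{thm:finite-unsuccess}, with probability one there is an index beyond which every iteration is successful, so I restrict attention to $\omega$ for which this also holds, and then choose $K(\omega)$ large enough that for all $k \ge K(\omega)$ the $k$th iteration is successful and $\|\nabla M_k(\BFX_k(\omega))\| \ge \kappa_{lbg}(\omega)$. From here on no probability is involved, which is the sense in which the argument is pathwise.

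Next I would read off from Step \ref{AMC:update} of Algorithm \ref{alg:sublisting} the elementary bound $\Delta_k \ge \min\{\Deltatilde_k,\, \beta\|\nabla M_k(\BFX_k)\|\}$, so that $\Delta_k \ge \min\{\Deltatilde_k,\, \beta\kappa_{lbg}(\omega)\}$ for all $k \ge K(\omega)$; it therefore suffices to bound $\Deltatilde_k$ away from zero. Since every iteration $k \ge K(\omega)$ is successful, Step \ref{ASTRO:accept} gives $\Deltatilde_{k+1} = \min\{\gamma_1\Delta_k,\, \Delta_{\max}\} \ge \min\{\gamma_1\Deltatilde_k,\, \gamma_1\beta\kappa_{lbg}(\omega),\, \Delta_{\max}\}$. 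Setting $\bar{m}(\omega) \defn \min\{\gamma_1\beta\kappa_{lbg}(\omega),\, \Delta_{\max}\}$, a short induction then shows $\Deltatilde_k \ge \min\{\Deltatilde_{K(\omega)},\, \bar{m}(\omega)/\gamma_1\}$ for all $k \ge K(\omega)$: when $\Deltatilde_k \ge \bar{m}(\omega)/\gamma_1$ the recursion keeps $\Deltatilde_{k+1} \ge \bar{m}(\omega)$, whereas when $\Deltatilde_k < \bar{m}(\omega)/\gamma_1$ the other two terms in the minimum are inactive and $\Deltatilde_{k+1} \ge \gamma_1\Deltatilde_k > \Deltatilde_k$, so $\Deltatilde_k$ can only increase until it clears the threshold. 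Combining this with the first bound yields, for all $k \ge K(\omega)$, $\Delta_k \ge \kappa_{lbd}(\omega) \defn \min\{\Deltatilde_{K(\omega)},\, \bar{m}(\omega)/\gamma_1,\, \beta\kappa_{lbg}(\omega)\} > 0$, which is the claim.

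I do not anticipate a genuine obstacle: this is just the classical trust-region fact that the radius cannot shrink indefinitely while the (model) gradient stays bounded below, specialized to ASTRO-DF, and the only stochastic ingredient --- eventual success of the iterations --- has already been discharged in Theorem \ref{thm:finite-unsuccess}. The two points that need a little care are (i) correctly extracting $\Delta_k \ge \min\{\Deltatilde_k,\, \beta\|\nabla M_k(\BFX_k)\|\}$ from the nested minimum and maximum defining $\Delta_k$ in Step \ref{AMC:update}, and (ii) carrying the cap $\Delta_{\max}$ through the induction, since $\beta\kappa_{lbg}(\omega)$ need not be smaller than $\Delta_{\max}$.
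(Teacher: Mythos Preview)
Your proposal is correct and follows essentially the same pathwise approach as the paper: invoke Theorem \ref{thm:finite-unsuccess} for eventual success, then use the structure of Step \ref{AMC:update} together with the successful-iteration update $\Deltatilde_{k+1}=\min\{\gamma_1\Delta_k,\Delta_{\max}\}$ to preclude collapse of $\Delta_k$ while $\|\nabla M_k(\BFX_k)\|$ stays bounded below. The only cosmetic difference is that the paper organizes the argument as a two-case split on whether $\Deltatilde_k \gtrless \mu\|\nabla M_k(\BFX_k)\|$, whereas you extract the single inequality $\Delta_k \geq \min\{\Deltatilde_k,\beta\|\nabla M_k(\BFX_k)\|\}$ and run an induction on $\Deltatilde_k$; the content is the same, and your handling of the cap $\Delta_{\max}$ is slightly more explicit.
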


\begin{proof}
Let $K_{g}(\omega)>0$ be such that $\left\| \nabla M_{k}\left(\BFX_{k}(\omega)\right)\right\| \geq\kappa_{lbg}$ if $k> K_{g}(\omega)$. From Theorem \ref{thm:finite-unsuccess}, we let $K_{s}(\omega)>0$ be such that $K_{s}(\omega)-1$ is the last unsuccessful iteration, that is, $k$ is a successful iteration if $k\geq K_{s}(\omega)$. Then $\Deltatilde_{k}(\omega)>\Delta_{k-1}(\omega)$ for all $k\geq K_{s}(\omega)$. For $k\geq \max\left\{ K_{g}(\omega),K_{s}(\omega)\right\}+1 $, consider the two cases below when Algorithm \ref{alg:sublisting} starts.

\begin{itemize}
\item[Case 1:] ($\Deltatilde_{k}(\omega)\geq\mu\left\| \nabla M_{k}\left(\BFX_{k}(\omega)\right)\right\|$): Since $\Deltatilde_{k}(\omega)\geq\mu\left\| \nabla M_{k}\left(\BFX_{k}(\omega)\right)\right\|$, the inner loop of Algorithm \ref{alg:sublisting} is executed, implying that \[\Delta_k(\omega) \geq \beta\|\nabla M_{k}\left(\BFX_{k}(\omega)\right)\| \geq \beta \kappa_{lbg}(\omega).\]
\item[Case 2:] ($\Deltatilde_{k}(\omega) < \mu\left\| \nabla M_{k}\left(\BFX_{k}(\omega)\right)\right\|$): In this scenario, the inner loop of Algorithm \ref{alg:sublisting} is not executed, implying that $\Delta_{k}(\omega)=\Deltatilde_{k}(\omega)=\gamma_{1}\Delta_{k-1}(\omega)$ meaning that the trust-region radius expands from the previous iteration.
\end{itemize}

Case 1 and Case 2 iterations are mutually exclusive and collectively exhaustive. Case 1 iterations imply, under the assumed postulates, that $\Delta_k(\omega) \geq \beta \kappa_{lbg}(\omega)$; Case 2 iterations result in an expanded trust-region radius. Conclude from these assertions that $\Delta_{k}(\omega)\geq \min\left\{\beta \kappa_{lbg}(\omega), \Delta_{\max\left\{K_g(\omega),K_s(\omega)\right\}}\right\}$.
\end{proof}

An important observation from the Algorithms \ref{alg:mainlisting} and \ref{alg:sublisting} is that the difference between the function estimates of two consecutive iterates can be increasing; or in other words $\Fbar\left(\BFX_k\right)$ is not necessarily monotone decreasing. When iteration $k$ is unsuccessful, that is, $\BFX_{k}=\BFX_{k+1}$, it is possible that $\Fbar\left(\BFX_{k},N_{k}\right)<\Fbar\left(\BFX_{k+1},N_{k+1}\right)$. When iteration $k$ is successful, it must be true that $\Fbar\left(\BFX_{k},\Ntilde_{k}\right)>\Fbar\left(\BFX_{k+1},N_{k+1}\right)$ but it is still possible that $\Fbar\left(\BFX_{k},N_{k}\right)<\Fbar\left(\BFX_{k+1},N_{k+1}\right)$ since $\Fbar\left(\BFX_{k},N_{k}\right)\neq\Fbar\left(\BFX_{k},\Ntilde_{k}\right)$. 
We are now fully setup to demonstrate that ASTRO-DF's iterates converge to a first-order critical point with probability one. 

\begin{theorem}\label{thm:conv}
Let Assumptions \ref{assump:diff-bddbelow} -- \ref{assump:lambda_lbd} hold. Then $\left\| \nabla f\left(\BFX_{k}\right)\right\| \xrightarrow{wp1}0$ as $k \to \infty$.
\end{theorem}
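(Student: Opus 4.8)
The plan is to argue by contradiction in the classical two–stage trust–region style: first show $\liminf_{k}\|\nabla M_k(\BFX_k)\|=0$ with probability one, then upgrade this to $\limsup_{k}\|\nabla M_k(\BFX_k)\|=0$ with probability one, and finally transfer both conclusions to $\nabla f(\BFX_k)$ via Lemma~\ref{lem:G-convergence}. For the $\liminf$ step, fix a sample path $\omega$ off a null set on which $\Delta_k(\omega)\to 0$ (Theorem~\ref{thm:delta-conv}) and on which only finitely many iterations are unsuccessful (Theorem~\ref{thm:finite-unsuccess}). If $\liminf_{k}\|\nabla M_k(\BFX_k(\omega))\|>0$, then $\|\nabla M_k(\BFX_k(\omega))\|\geq\kappa_{lbg}(\omega)$ for all large $k$, and Lemma~\ref{lem:bounds} forces $\Delta_k(\omega)\geq\kappa_{lbd}(\omega)>0$ for all large $k$, contradicting $\Delta_k(\omega)\to 0$. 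Hence $\liminf_{k}\|\nabla M_k(\BFX_k)\|=0$ w.p.1.

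Next I would establish that, w.p.1, the function–estimate sequence $\Fbar(\BFX_k,N_k)$ converges to a finite limit. Using the decomposition from Theorem~\ref{thm:delta-conv}, $\Fbar(\BFX_{k+1},N_{k+1})-\Fbar(\BFX_k,N_k)=A_k+B_k$, where for large $k$ (w.p.1) every iteration is successful and so $A_k\leq-\kappa_{efd}\Delta_k^2$, while Lemma~\ref{lem:fhat-bounded}(b) applied at both sample sizes $N_k$ and $\Ntilde_k$ gives $|B_k|\leq c\Delta_k^2$ eventually for any fixed $c>0$. Choosing $c<\kappa_{efd}$ makes $\Fbar(\BFX_k,N_k)$ eventually monotone nonincreasing, and Lemma~\ref{lem:fhat-bounded}(a) rules out divergence to $-\infty$, so the sequence converges w.p.1; in particular the telescoping block sums $\Fbar(\BFX_{\ell},N_{\ell})-\Fbar(\BFX_{m},N_{m})$ with $\ell\leq m\to\infty$ tend to $0$.

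For the $\limsup$ step, suppose for contradiction that $\mcD:=\{\omega:\limsup_k\|\nabla f(\BFX_k)\|>0\}$ has positive probability; by Lemma~\ref{lem:G-convergence} the same is true with $\nabla M_k(\BFX_k)$ in place of $\nabla f(\BFX_k)$. Fix $\omega\in\mcD$ in the full–measure intersection of the good events above and of Assumption~\ref{assump:hessian-bound}, and pick $\epsilon_0=\epsilon_0(\omega)>0$ with $\limsup_k\|\nabla M_k(\BFX_k(\omega))\|>\epsilon_0$; by the $\liminf$ step also $\liminf_k\|\nabla M_k(\BFX_k(\omega))\|<\epsilon_0/2$. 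Construct the standard alternating indices $\ell_i<m_i<\ell_{i+1}$ so that $\|\nabla M_{\ell_i}(\BFX_{\ell_i})\|\geq\epsilon_0$, $\|\nabla M_{m_i}(\BFX_{m_i})\|<\epsilon_0/2$, and $\|\nabla M_k(\BFX_k)\|\geq\epsilon_0/2$ for all $k\in[\ell_i,m_i)$. For large $i$ all such $k$ are successful, satisfy $\|\nabla^2 M_k\|\leq\kappa_{bhm}$, and have $\Delta_k$ small enough that the Cauchy decrease (Assumption~\ref{assump:cauchy}) reads $M_k(\BFX_k)-M_k(\BFXtilde_{k+1})\geq\frac{\kappa_{fcd}}{4}\epsilon_0\Delta_k$; combined with $\rhohat_k>\eta_1$, the identity $\Fbar(\BFX_k,\Ntilde_k)=M_k(\BFX_k)$, and $|B_k|=O(\Delta_k^2)$, each step contributes a decrease of at least $\frac{\eta_1\kappa_{fcd}\epsilon_0}{8}\Delta_k$ in $\Fbar(\cdot,N_\cdot)$. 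Summing over the block and using $\|\BFX_{m_i}-\BFX_{\ell_i}\|\leq\sum_{k=\ell_i}^{m_i-1}\|\BFS_k\|\leq\sum_{k=\ell_i}^{m_i-1}\Delta_k$ gives
\[
\Fbar(\BFX_{\ell_i},N_{\ell_i})-\Fbar(\BFX_{m_i},N_{m_i})\ \geq\ \frac{\eta_1\kappa_{fcd}\epsilon_0}{8}\,\|\BFX_{m_i}-\BFX_{\ell_i}\|.
\]
By the previous paragraph the left side tends to $0$, so $\|\BFX_{m_i}-\BFX_{\ell_i}\|\to 0$, and Lipschitz continuity of $\nabla f$ (Assumption~\ref{assump:diff-bddbelow}) yields $\|\nabla f(\BFX_{\ell_i})-\nabla f(\BFX_{m_i})\|\to 0$. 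But Lemma~\ref{lem:G-convergence} and the construction force $\|\nabla f(\BFX_{\ell_i})\|\geq\epsilon_0-o(1)$ and $\|\nabla f(\BFX_{m_i})\|<\epsilon_0/2+o(1)$, hence $\|\nabla f(\BFX_{\ell_i})-\nabla f(\BFX_{m_i})\|\geq\epsilon_0/2-o(1)$, a contradiction. Therefore $\mbP(\mcD)=0$, i.e. $\limsup_k\|\nabla f(\BFX_k)\|=0$ w.p.1, which with the $\liminf$ step gives the claim.

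The delicate point, and the step I expect to be the main obstacle, is the block bookkeeping in the $\limsup$ stage. In the deterministic trust–region proof the objective is monotone on successful iterations, but here $N_k$ is a stopping time, $\Fbar(\BFX_k,N_k)\neq\Fbar(\BFX_k,\Ntilde_k)$, and monotonicity only holds up to the $B_k$ corrections; one must verify — w.p.1 and for all large $k$ — that these corrections are genuinely of smaller order than the per-step decrease $\Theta(\epsilon_0\Delta_k)$. This is exactly where Lemma~\ref{lem:fhat-bounded}(b), itself resting on Theorem~\ref{thm:meanasylim}(iv) and the lower-bound sequence of Assumption~\ref{assump:lambda_lbd}, is indispensable, and it is the place where the paper's adaptive-sampling machinery does the real work.
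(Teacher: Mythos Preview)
Your proposal is correct and follows essentially the same route as the paper's proof: establish $\liminf_k\|\nabla M_k(\BFX_k)\|=0$ via Lemma~\ref{lem:bounds} and Theorem~\ref{thm:delta-conv}, show the function-estimate sequence $\Fbar(\BFX_k,N_k)$ converges, then run the standard two-subsequence contradiction to rule out $\limsup_k\|\nabla f(\BFX_k)\|>0$ using Cauchy decrease, telescoping, and Lipschitz continuity of $\nabla f$. The only cosmetic differences are that the paper indexes its blocks by thresholds on $\|\nabla f(\BFX_k)\|$ (at levels $3\epsilon$ and $2\epsilon$) rather than on $\|\nabla M_k(\BFX_k)\|$, and that you are actually more explicit than the paper about why $\Fbar(\BFX_k,N_k)$ is eventually monotone --- the paper's observation (b) simply asserts this, whereas you correctly derive it from $A_k\leq-\kappa_{efd}\Delta_k^2$ and $|B_k|\leq c\Delta_k^2$ with $c<\kappa_{efd}$, which is exactly the delicate point you flag in your last paragraph.
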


\begin{proof} We start by making the following two observations. 
\begin{enumerate} \item[(a)] Lemma \ref{lem:bounds} and Theorem \ref{thm:delta-conv} together imply that $\lim\inf_{k\rightarrow\infty}\left\| \nabla M_{k}\left(\BFX_{k}\right)\right\| =0$. This, along with Lemma \ref{lem:G-convergence}, imply $\lim\inf_{k\rightarrow\infty}\left\| \nabla f\left(\BFX_{k}\right)\right\| =0$ almost surely. 

\item[(b)] Consider the sequence of function estimates $\{\Fbar\left(\BFX_{k},N_{k}\right)\}$. Since we know from Theorem \ref{thm:finite-unsuccess} that iterations are ultimately successful with probability one, we see that $\{\Fbar\left(\BFX_{k},N_{k}\right)\}$ is non-increasing for large enough $k$ with probability one. This and the fact that the sequence $\{\Fbar\left(\BFX_{k},N_{k}\right)\}$ is bounded (by Lemma \ref{lem:fhat-bounded}) with probability one implies that the difference $\Fbar\left(\BFX_{k+1},N_{k+1}\right) - \Fbar\left(\BFX_{k},N_{k}\right) \xrightarrow{wp1}0.$ \end{enumerate}       

Suppose the assertion of Theorem \ref{thm:conv} is not true. The there exists a set $\mathcal{D}$ of positive measure such that for any sample-path $\omega \in \mathcal{D}$, there exists a subsequence of iterations $\left\{ t_{i}\right\} $ satisfiying $\left\Vert \nabla f\left(\BFX_{t_{i}}\right)\right\Vert >3\epsilon$ for some $\epsilon>0$. (In the previous statement and in what follows, we have suppressed $\omega$ from the notation for convenience.) Due to our observation in (a), corresponding to each element $t_{i}$, there exists $\ell_{i}=\ell\left(t_{i}\right)$, the first iteration after $t_{i}$, such that $\left\Vert \nabla f\left(\BFX_{\ell_{i}}\right)\right\Vert <2\epsilon$. Therefore if $\mcK_{i}=\left\{k:t_{i}\leq k\leq \ell_{i}\right\}$, then $\left\Vert \nabla f\left(\BFX_{k}\right)\right\Vert\geq2\epsilon$ for all $k\in\mcK_{i}$. 

Now choose $i$ large enough so that by Theorem \ref{thm:finite-unsuccess}, Lemma \ref{lem:G-convergence}, Theorem \ref{thm:delta-conv}, and Lemma \ref{lem:fhat-bounded} for all $k\in\mcK_{i}$, (i) $\rhohat_{k}\geq\eta_{1}$ (only successful iterations), (ii) $\left\Vert \nabla M_{k}\left(\BFX_{k}\right)\right\Vert \geq \epsilon$ (model gradient close to the function gradient), (iii) $\Delta_{k}\leq\kappa_{bhm}^{-1}\epsilon$ (trust-region radius small), and (iv) $\left|\Fbar\left(\BFX_{k},N_{k}\right)-f\left(\BFX_{k}\right)\right|\leq 8^{-1}\eta_{1}\epsilon\kappa_{fcd}\Delta_{k}$ (simulation error small). As a result and by Cauchy reduction in Assumption \ref{assump:cauchy} we get  
\begin{align}
\Fbar\left(\BFX_{k+1},N_{k+1}\right)-\Fbar\left(\BFX_{k},\Ntilde_{k}\right) & \leq-2^{-1}\eta_1\kappa_{fcd}\left\|\nabla M_k\left(\BFX_k\right)\right\|\min\left\{\kappa_{bhm}^{-1}\left\|\nabla M_k\left(\BFX_k\right)\right\|,\Delta_k\right\}\nonumber\\
 & \leq-2^{-1}\eta_1\kappa_{fcd}\epsilon\min\left\{\kappa_{bhm}^{-1}\epsilon,\Delta_k\right\}\nonumber\\
 & =-\eta_{1}\frac{\kappa_{fcd}}{2}\epsilon\Delta_{k}.\label{eq:fred} 
 \end{align}
Therefore
\begin{align}
\Fbar\left(\BFX_{k+1},N_{k+1}\right)-\Fbar\left(\BFX_{k},N_{k}\right) & \leq\Fbar\left(\BFX_{k+1},N_{k+1}\right)-\Fbar\left(\BFX_{k},\Ntilde_{k}\right)\nonumber\\
 & +\left|\Fbar\left(\BFX_{k},\Ntilde_{k}\right)-f\left(\BFX_{k}\right)\right|+\left|f\left(\BFX_{k}\right)-\Fbar\left(\BFX_{k},N_{k}\right)\right|\nonumber\\
 & \leq-\eta_{1}\frac{\kappa_{fcd}}{2}\epsilon\Delta_{k}+\eta_{1}\frac{\kappa_{fcd}}{4}\epsilon\Delta_{k}=-\eta_{1}\frac{\kappa_{fcd}}{4}\epsilon\Delta_{k},\label{eq:fdiff} 
 \end{align}
 and as a result $\Delta_{k}  \leq -4(\eta_{1}\kappa_{fcd}\epsilon)^{-1}\left(\Fbar\left(\BFX_{k+1},N_{k+1}\right)-\Fbar\left(\BFX_{k},N_{k}\right)\right)$ for all $k\in\mcK_{i}$. It follows that \begin{align}\left\Vert \BFX_{\ell_{i}}-\BFX_{t_{i}}\right\Vert  & \leq\sum_{j\in\mcK_{i}}\left\Vert \BFX_{j+1}-\BFX_{j}\right\Vert \leq\sum_{j\in\mcK_{i}}\Delta_{j}\nonumber\\
 & \leq\frac{-4}{\eta_{1}\kappa_{fcd}\epsilon}\sum_{j\in\mcK_{i}}\Fbar\left(\BFX_{j+1},N_{j+1}\right)-\Fbar\left(\BFX_{j},N_{j}\right)\nonumber\\
 & \leq\frac{-4}{\eta_{1}\kappa_{fcd}\epsilon}\left(\Fbar\left(\BFX_{\ell_{i}},N_{\ell_{i}}\right)-\Fbar\left(\BFX_{t_{i}},N_{t_{i}}\right)\right).
\label{eq:xdiff}  \end{align}
The inequality in (\ref{eq:xdiff}) and our observation in (b) imply that \begin{equation}\label{cauchydiff}\left\Vert \BFX_{\ell_{i}}-\BFX_{t_{i}}\right\Vert \to 0 \mbox{ as } i \to \infty. \end{equation} 
Furthermore, since \[\begin{aligned}\left|f\left(\BFX_{\ell_{i}}\right)-f\left(\BFX_{t_{i}}\right)\right| \leq & \left|\bar{F}\left(\BFX_{\ell_{i}},N_{\ell_{i}}\right)-\bar{F}\left(\BFX_{t_{i}},N_{t_{i}}\right)\right| \\ & + \left|f\left(\BFX_{\ell_{i}}\right)-\bar{F}\left(\BFX_{\ell_{i}},N_{\ell_{i}}\right)\right|
+ \left|f\left(\BFX_{t_{i}}\right)-\bar{F}\left(\BFX_{t_{i}},N_{t_{i}}\right)\right|,\end{aligned}\] we see that \begin{equation}\label{cauchyfndiff} \left|f\left(\BFX_{\ell_{i}}\right)-f\left(\BFX_{t_{i}}\right)\right| \to 0 \mbox{ as } i \to \infty.\end{equation}

Using (\ref{cauchydiff}) and (\ref{cauchyfndiff}), and since the gradient $\nabla f(\BFx)$ is Lipschitz continuous, we conclude $\left\Vert \nabla f\left(\BFX_{\ell_{i}}\right)-\nabla f\left(\BFX_{t_{i}}\right)\right\Vert \to 0$  as $i \to \infty$. This, however, gives us a contradiction since the definition of $t_i$ and $\ell_i$ dictate that $\left\Vert \nabla f\left(\BFX_{t_{i}}\right)\right\Vert >3\epsilon$ and $\left\Vert \nabla f\left(\BFX_{\ell_{i}}\right)\right\Vert < 2\epsilon$.\end{proof}

\section{FURTHER REMARKS AND DISCUSSION}
\label{sec:final}

Over the last decade or so, derivative-free trust-region  algorithms have deservedly enjoyed great attention and success in the deterministic optimization context. Analogous algorithms for the now widely prevalent and important Monte Carlo stochastic optimization context, where only stochastic function oracles are available, is poorly studied. This paper develops adaptive sampling trust-region optimization derivative-free algorithms (called ASTRO-DF) for solving low to moderate dimensional stochastic optimization problems. The key idea within ASTRO-DF is to endow a derivative-free trust-region algorithm with an adaptive sampling strategy for function estimation. The  extent of such sampling at a visited point depends on the estimated proximity of the point to a solution, calculated by balancing the estimated standard error of the function estimate with a certain power of the incumbent trust-region radius. So, just as one might expect of efficient algorithms, Monte Carlo sampling in ASTRO-DF tends to be low during the early iterations compared to later iterations, when the visited points are more likely to be closer to a first-order critical point. More importantly, however, the schedule of sampling is not predetermined (as in most stochastic approximation and sample-average approximation algorithms) but instead adapts to the prevailing algorithm trajectory and the needed precision of the function estimates. 

We show that ASTRO-DF's iterates exhibit global convergence to a first-order critical point with probability one. While the proofs are detailed, convergence follows from two key features of ASTRO-DF: (i) the stochastic interpolation models are constructed across iterates in such a way that the error in the stochastic interpolation model is guaranteed to remain in lock-step with (a certain power of) the trust-region radius;  and (ii) the optimization within the trust-region step is performed in such a way as to guarantee Cauchy reduction and then the objective function is evaluated at the resulting candidate point. Remarkably, the features (i) and (ii) together ensure that the sequence of trust-region radii necessarily need to converge to zero with probability one, and that the model gradient, the true gradient and the trust-region radius all have to remain in lock-step, thus guaranteeing convergence to a first-order critical point with probability one. The key driver for efficiency is adaptive sampling, making all sample sizes within ASTRO-DF stopping times that are explicitly dependent on algorithm trajectory.

Four other points are worthy of mention.

\begin{enumerate} \item[(i)] Our proofs demonstrate global convergence to first-order critical points. Corresponding proofs of convergence to a second-order critical point can be obtained in an identical fashion by driving some measure of second-order stationarity to zero instead of the model gradient. \item[(ii)] The adaptive sampling ideas and the ensuing proofs we have presented in this paper are for the specific case of stochastic interpolation models. It seems to us, however, that the methods of proof presented in this paper can be co-opted (with care) into other potentially more powerful model construction ideas such as regression~\cite{billups2013derivative} and kriging~\cite{anknelsta2010,stein1}. Which of such ideas result in the best derivative-free trust-region algorithms (as measured by practical performance and asymptotic efficiency) remains to be seen. \item[(iii)] We have presented no proof that ASTRO-DF's iterates achieve the Monte Carlo canonical rate~\cite{asmussen2007stochastic}. Demonstrating that ASTRO-DF's iterates achieve the canonical rate will rely on rate results for derivative-free trust-region algorithms in the deterministic context, some of which are only now appearing~\cite{garjudvic2015}. We speculate, however, that ASTRO-DF's iterates do enjoy the canonical rate, as our analogous work~\cite{hashemi2014adaptive} in a different context has demonstrated. \item[(iv)] The asymptotic sampling rate within ASTRO-DF is approximately $\mcO\left(\Delta_k^{-4}\right)$, where $\Delta_k$ is the incumbent trust-region radius. (See (\ref{mukghobd}) in the proof of Lemma \ref{lem:fhat-bounded}.) This sampling stipulation is comparable to that prescribed in two other prominent recent studies~\cite{chemensch2015,lar2012}.The $\mcO(\Delta^{-4})$ sampling appears to be the minimum needed to guarantee convergence in derivative-free trust-region methods \emph{without assumptions on the tail-behavior of the error driving simulation observations.} A question of interest is whether the $\mcO(\Delta^{-4})$ sampling stipulation can be relaxed by assuming that the simulation error is light-tailed, that is, the errors have a well-defined moment-generating function. Such assumption will likely allow modifying the results in \cite{chemensch2015} through the use of the Chernoff bound instead of the Chebyshev inequality, leading to a weakening of the sampling stipulation. The corresponding modification in ASTRO-DF will involve proving a variation of Theorem 3.7 which we believe will be a contribution in itself. Assuming that the simulation errors are light-tailed is reasonable. For instance, many distributions we see in practice, e.g, normal, gamma, beta, are light-tailed; any distribution with bounded support is light-tailed. \end{enumerate}

\appendix
%\section{Appendix}
%\appendix
%\section{Proof of Theorem \ref{thm:meanasylim}}

%\noindent Since $\sigmahat^2_n \xrightarrow{wp1} \sigma^2$ as $n \to \infty$, it is clear that the assertion (i) in the statement of Theorem \ref{thm:meanasylim} holds. 

%To prove the assertion in (iii), denoting $S_N = \sum_{j=1}^N X_i$, we see that \begin{align}\label{stoppingtail} \mbP\{\bar{X}_N > t\sigma\} & \leq \mbP\{\bar{S}_N > t\sigma \lambda\} \nonumber \\ &= \mbP\{S^2_N > t^2\sigma^2 \lambda^2\} \nonumber \\ \leq \frac{\mathbb{E}[S_N^2]}{t^2 \sigma^2 \lambda^2},\end{align} where the first inequality above follows by the definition of $N$, the equality follows since $t\sigma > 0$, and the second inequality through Markov~\cite{bil95}. However, we know from (ii) that $S_N$ satisfies the postulates of Wald's first and second lemmas~\cite{wald}, and therefore $\mathbb{E}[S_N^2] = \sigma^2\mathbb{E}[N]$. Plugging this in (\ref{stoppingtail}), we get \begin{equation} \label{stoppingtail2}  \mbP\{\bar{X}_N > t\sigma\} \leq \frac{\mathbb{E}[N]}{t^2 \lambda^2}.\end{equation} 

\section{Proof of part (i) of Lemma \ref{lem:model_error}}
\noindent We know that for all $\BFz\in\mcB\left(\BFY_{1};\Delta\right)$, \[m\left(\BFz\right)=\sum_{i=1}^{p}\ell_{i}\left(\BFz\right)f\left(\BFY_{i}\right);\ M\left(\BFz\right)=\sum_{i=1}^{p}\ell_{i}\left(\BFz\right)\Fbar\left(\BFY_{i},n(\BFY_{i})\right),\] where $\ell_{j}\left(\BFz\right)$ are the Lagrange polynomials associated with the set $\mcY$. Since $\mcY$ is $\Lambda$-poised in $\mcB\left(\BFY_{1};\Delta\right)$, we know (see Chapter 3 in~\cite{conschvic2009}) that \begin{equation}\label{lambdapoised_lagrange} \Lambda \geq \Lambda_{\ell} = \max_{i=1,2,\ldots,p}\max_{\BFz\in\mcB\left(\BFY_{1};\Delta\right)}\left|\ell_{i}\left(\BFz\right)\right|.\end{equation} Now write, for $\BFz \in \mcB\left(\BFY_{1};\Delta\right)$,
\[\begin{aligned}\left|M\left(\BFz\right)-m\left(\BFz\right)\right| & =\left|\sum_{i=1}^{p}\ell_{i}\left(\BFz\right)\left(\Fbar\left(\BFY_{i},n(\BFY_{i})\right)-f\left(\BFY_{i}\right)\right)\right|\\
% & \leq\Lambda_{\ell}\sum_{i=1}^{p}\left|\Fbar\left(\BFY_{i},N\left(\BFY_{i}\right)\right)-f\left(\BFY_{i}\right)\right|\\
 & \leq p\Lambda_{\ell}\max_{i\in\left\{ 1,2,\ldots,p\right\} }\left|\Fbar\left(\BFY_{i},n(\BFY_{i})\right)-f\left(\BFY_{i}\right)\right| \\
& \leq p\Lambda\max_{i\in\left\{ 1,2,\ldots,p\right\} }\left|\Fbar\left(\BFY_{i},n(\BFY_{i})\right)-f\left(\BFY_{i}\right)\right|,
\end{aligned}
\]
where the last inequality follows from (\ref{lambdapoised_lagrange}).

\section{Proof of part (ii) of Lemma \ref{lem:model_error}}

%\begin{proof} 

If the model $M(\cdot)$ is a stochastic linear interpolation model, we see that for $i = 1, 2, 3, \ldots, p$ \begin{equation}\label{lmodelerr}\left(\BFY_{i}-\BFY_{1}\right)^{T}\nabla M\left(\BFY_{1}\right)=M\left(\BFY_{i}\right)-M\left(\BFY_{1}\right)= f\left(\BFY_{i} \right)-f\left(\BFY_{1}\right)+ E_{i}-E_{1}.\end{equation} Now re-trace the steps of the proof on pages 26 and 27 of~\cite{conschvic2009}, while carrying the additional term $E_{i}-E_{1}$ appearing on the right-hand side of (\ref{lmodelerr}).

If the model $M(\cdot)$ is a stochastic quadratic interpolation model, we write $\BFz \in\mcB(\BFY_{1}, \Delta)$, $M(\BFz) = c + \BFz^Tg + \frac{1}{2}\BFz^T H\BFz = f(\BFz) + e^f(\BFz); \nabla M(\BFz) = H\BFz + g = \nabla f(\BFz) + e^g(\BFz); \nabla^2 M(\BFz) = H = \nabla^2 f(\BFz) + e^H(\BFz)$. Now write, after subtracting the expression for $M(\BFz)$ from that for $M(\BFY_i), i = 1, \ldots, p+1$ to get \begin{equation}\label{qmodelerr} (\BFY_i - \BFz)^Tg + \frac{1}{2}(\BFY_i - \BFz)^TH(\BFY_i - \BFz) + (\BFY_i - \BFz)^TH\BFz = f(\BFY_i) -f(\BFz) + E^i -e^f(\BFz).\end{equation} Notice that the equation in~(\ref{qmodelerr}) is identical to the corresponding equation on page 53 of~\cite{conschvic2009} except for the extra term $E^i$ appearing on the right-hand side of (\ref{qmodelerr}). Re-trace the steps on pages 53, 54, and 55 of~\cite{conschvic2009}.
%\end{proof}

\section{Model Construction Algorithm Termination} In what follows, we demonstrate through the following result that the model construction algorithm (Algorithm \ref{alg:sublisting}) terminates with probability one, whenever the incumbent solution $\BFX_{k}$ is not a first-order critical point.

%This guarantee is important to ensure that the algorithm does succeed in creating a model of the specified quality in a finite number of steps, unless the current iterate is a first-order critical point. 

\begin{lemma}
\label{lem:finite-improvement} 
%If there is a constant  $C>0$ such that $\left\| \nabla f\left(\BFX_{k}(\omega)\right)\right\| >C$ almost surely, then 
Suppose the incumbent solution $\BFX_{k}\in\real^d$ during the $k$th iteration is not first-order critical, that is, $\nabla f(\BFX_{k}) \neq 0.$ Then Algorithm \ref{alg:sublisting} terminates in a finite number of steps with probability one.
\end{lemma}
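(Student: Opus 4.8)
The strategy is a contradiction-style argument: if the contraction loop of Algorithm~\ref{alg:sublisting} were to run forever, its model gradients would be forced to converge to $\nabla f(\BFX_k)\neq\BFzero$, yet the loop's exit test failing at every stage would force them to $\BFzero$. To make this precise, extend Algorithm~\ref{alg:sublisting} so that it ignores the exit test in Step~\ref{AMC:end_loop} and keeps contracting the candidate radius $\Delta^{(j)}:=\Deltatilde_k w^{j-1}$ and adaptively sampling, producing an infinite sequence $\{M_k^{(j)}\}_{j\ge1}$ of interpolation models on $\Lambda$-poised sets $\mcY_k^{(j)}\subset\mcB(\BFX_k;\Delta^{(j)})$ with $\BFY_1^{(j)}=\BFX_k$; the genuine run of Algorithm~\ref{alg:sublisting} coincides with this extended run up to the first $j$ at which $\Delta^{(j)}\le\mu\|\nabla M_k^{(j)}(\BFX_k)\|$. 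Since $\Delta^{(j)}\to0$ geometrically while $\nabla f(\BFX_k)\neq\BFzero$, it suffices to prove $\nabla M_k^{(j)}(\BFX_k)\xrightarrow{wp1}\nabla f(\BFX_k)$ as $j\to\infty$: then, with probability one, for all large $j$ one has $\|\nabla M_k^{(j)}(\BFX_k)\|\ge\tfrac12\|\nabla f(\BFX_k)\|$ and $\Delta^{(j)}<\tfrac{\mu}{2}\|\nabla f(\BFX_k)\|$, so $\Delta^{(j)}<\mu\|\nabla M_k^{(j)}(\BFX_k)\|$, the exit test holds, and the loop stops at a finite step.

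To prove $\nabla M_k^{(j)}(\BFX_k)\xrightarrow{wp1}\nabla f(\BFX_k)$, apply part~(ii) of Lemma~\ref{lem:model_error} at $\BFz=\BFX_k=\BFY_1^{(j)}$: there are constants $\kappa_1,\kappa_2>0$ and $\theta\in\{1,2\}$ ($p=d+1,\ \theta=1$ for linear, $p=(d+1)(d+2)/2,\ \theta=2$ for quadratic models) such that
\[
\bigl\|\nabla M_k^{(j)}(\BFX_k)-\nabla f(\BFX_k)\bigr\|\le\kappa_1\bigl(\Delta^{(j)}\bigr)^{\theta}+\kappa_2\,\frac{\sqrt{\sum_{i=2}^{p}\bigl(E_{k,i}^{(j)}-E_{k,1}^{(j)}\bigr)^{2}}}{\Delta^{(j)}},
\]
where $E_{k,i}^{(j)}:=\Fbar\bigl(\BFY_i^{(j)},N(\BFY_i^{(j)})\bigr)-f(\BFY_i^{(j)})$. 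The first term vanishes as $j\to\infty$, and since $\sqrt{\sum_{i=2}^{p}\bigl(E_{k,i}^{(j)}-E_{k,1}^{(j)}\bigr)^{2}}\le\sum_{i=2}^{p}\bigl|E_{k,i}^{(j)}-E_{k,1}^{(j)}\bigr|$, it is enough to show $|E_{k,i}^{(j)}|/\Delta^{(j)}\xrightarrow{wp1}0$ for each $i$. Here I would reuse the estimate from the proof of Lemma~\ref{lem:fhat-bounded}: matching the inner adaptive-sampling rule~(\ref{eq:inner-sampling-interpolation}) to part~(iv) of Theorem~\ref{thm:meanasylim} exactly as in~(\ref{mukghobd}) --- the role of ``$\lambda\to\infty$'' being played by $\lambda_k(\Delta^{(j)})^{-4}\to\infty$, valid because $k$ (hence $\lambda_k$) is fixed and $\Delta^{(j)}\to0$, with the floor $\lambda_k$ eventually inactive --- gives, for every $\delta>0$ and all large $j$, $\mathbb{E}\bigl[(E_{k,i}^{(j)})^2\mid\mcF_k\bigr]\le(1+\delta)\kappa_{ias}^2(\Delta^{(j)})^4\lambda_k^{-1}$. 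By the tower property and conditional Chebyshev, for any $c>0$ and all large $j$,
\[
\mbP\bigl\{|E_{k,i}^{(j)}|\ge c\,\Delta^{(j)}\bigr\}=\mathbb{E}\Bigl[\mbP\bigl\{|E_{k,i}^{(j)}|\ge c\,\Delta^{(j)}\mid\mcF_k\bigr\}\Bigr]\le\frac{(1+\delta)\kappa_{ias}^2(\Delta^{(j)})^2}{c^2\lambda_k}.
\]
Since $\Delta^{(j)}=\Deltatilde_k w^{j-1}$ with $w\in(0,1)$, $\sum_{j\ge1}(\Delta^{(j)})^2=\Deltatilde_k^2/(1-w^2)<\infty$, so these probabilities are summable in $j$; Lemma~\ref{lem:borel_cantelli} gives $\mbP\{|E_{k,i}^{(j)}|\ge c\,\Delta^{(j)}\ \mathrm{i.o.}\}=0$, and letting $c\downarrow0$ yields $|E_{k,i}^{(j)}|/\Delta^{(j)}\xrightarrow{wp1}0$. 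Combining, $\nabla M_k^{(j)}(\BFX_k)\xrightarrow{wp1}\nabla f(\BFX_k)$, which closes the argument. (Note that, unlike in Lemmas~\ref{lem:fhat-bounded} and~\ref{lem:G-convergence}, summability here is automatic from the geometric decay of $\Delta^{(j)}$, so Assumption~\ref{assump:lambda_lbd} is not actually needed.)

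The main obstacle is the conditional second-moment bound $\mathbb{E}[(E_{k,i}^{(j)})^2\mid\mcF_k]=O\bigl((\Delta^{(j)})^4/\lambda_k\bigr)$ for an \emph{adaptive} sample size: $N(\BFY_i^{(j)})$ is a stopping time depending on the very replicates being averaged, so $\Fbar(\BFY_i^{(j)},N(\BFY_i^{(j)}))$ is biased and the naive bound $\sigma^2/n$ is unavailable; this is exactly what Theorem~\ref{thm:meanasylim}(iv) supplies (which rests on the $4v$-th moment condition in Assumption~\ref{assump:var}). The remaining bookkeeping to verify is that the floor $\lambda_k$ in~(\ref{eq:inner-sampling-interpolation}) becomes inactive as $\Delta^{(j)}\to0$, that conditioning on $\mcF_k$ restores the i.i.d.\ replicate structure at each moving design point $\BFY_i^{(j)}$ so Theorem~\ref{thm:meanasylim} applies pointwise, and that $\Lambda$ and $p$ can be taken not to vary with $j$.
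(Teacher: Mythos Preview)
Your argument is correct and follows essentially the same route as the paper: assume the contraction loop never exits, use Lemma~\ref{lem:model_error}(ii) together with the conditional second-moment bound from Theorem~\ref{thm:meanasylim}(iv) and Chebyshev to show $\|\nabla M_k^{(j)}(\BFX_k)-\nabla f(\BFX_k)\|\xrightarrow{wp1}0$, and then obtain a contradiction with $\nabla f(\BFX_k)\neq\BFzero$. Your presentation is in fact slightly sharper than the paper's: you make explicit that summability over $j$ comes from the geometric decay of $(\Delta^{(j)})^2=\Deltatilde_k^2 w^{2(j-1)}$ (hence Assumption~\ref{assump:lambda_lbd} is not needed here), whereas the paper's bound in its display~(\ref{alg2term}) is written with $\Delta_k^2\lambda_k^{-1}$, which does not visibly depend on $j_k$ and is a minor slip---the intended bound is $(\Deltatilde_k w^{j_k-1})^2\lambda_k^{-1}$, exactly what you derive.
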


\begin{proof} Set $\left\|\nabla f(\BFX_{k})\right\| = c' \neq 0$. We will prove the assertion through a contradiction argument. 

First, we notice that the contraction loop (Steps \ref{AMC:sample_set}--\ref{AMC:end_loop}) in Algorithm \ref{alg:sublisting} is not entered if $\mu\left\| \nabla M_{k}\left(\BFX_{k}\right)\right\| \geq\Deltatilde_{k}$, in which case Algorithm \ref{alg:sublisting} terminates trivially. 

Next, suppose $\mu\left\| \nabla M_{k}\left(\BFX_{k}\right)\right\| < \Deltatilde_{k}$ and that the contraction loop in Steps \ref{AMC:sample_set}--\ref{AMC:end_loop} of Algorithm \ref{alg:sublisting} is infinite. Let $\nabla M_{k}^{(j_{k})}\left(\BFX_{k}\right)$ denote the model gradient during the $k_j$th iteration of the contraction loop. Then $\mu\left\| \nabla M_{k}^{(j_{k})}\left(\BFX_{k}\right)\right\| <\Deltatilde_{k}w^{j_{k}-1},\ \forall j_k\geq1$. This means, since $w < 1$, that $\Deltatilde_{k}w^{j_{k}-1}\to0$ and therefore $\left\| \nabla M_{k}^{(j_{k})}\left(\BFX_{k}\right)\right\| \xrightarrow{wp1} 0$ as $j_k\to\infty$. Furthermore, due to the sampling rule in (\ref{eq:inner-sampling-interpolation}) and  by Theorem \ref{thm:chow-robbins}, we have that $N\left(\BFY_{i}^{(j_{k})}\right)\to\infty$ as $j_{k} \to \infty$. Now, if $E_{k,i}^{(j_{k})} = \Fbar\left(\BFY_{i}^{(j_{k})},N\left(\BFY_{i}^{(j_{k})}\right)\right) - f\left(\BFY_{i}^{(j_{k})}\right)$, then we can write for large enough $k$ and some $\delta > 0$, 
%\begin{align} \label{alg2term} \mbP\left\{ \frac{\sum_{i=2}^{p}\left|E_{k,i}^{(j_{k})}-E_{k,1}^{(j_{k})}\right|}{\Deltatilde_{k}w^{j_{k}-1}}\geq c\right\} & \leq \sum_{i=2}^{p}\mathbb{E}\left[\mbP\left\{ \left|E_{k,i}^{(j_{k})}-E_{k,1}^{(j_{k})}\right|\geq c\Deltatilde_{k}w^{j_{k}-1} \, \vert \, \mcF_k\right\}\right] \nonumber  \\
%& \leq 2(p-1)c^{-2}\left(\Deltatilde_{k}w^{j_{k}-1}\right)^{-2}(1+\delta)\kappa^2_{ias}\Delta_k^4\lambda_k^{-1} \nonumber \\
%& \leq 2(p-1)c^{-2}(1+\delta)\kappa^2_{ias}\Delta_k^2\lambda_k^{-1},
%\end{align} 
\begin{equation}\label{alg2term} \mbP\left\{ \frac{\sum_{i=2}^{p}\left|E_{k,i}^{(j_{k})}-E_{k,1}^{(j_{k})}\right|}{\Deltatilde_{k}w^{j_{k}-1}}\geq c\right\}\leq8(p-1)^{3}c^{-2}(1+\delta)\kappa_{ias}^{2}\Delta_{k}^{2}\lambda_{k}^{-1},\end{equation}
which follows from arguments identical to (\ref{eq:mod_err_bd}) in the proof of Lemma \ref{lem:G-convergence}.
Since the right-hand side of (\ref{alg2term}) is summable, we conclude by Borel-Cantelli's first lemma (Lemma \ref{lem:borel_cantelli}) that $\left(\Deltatilde_{k}w^{j_{k}-1}\right)^{-1}\sum_{i=2}^{p}\left|E_{k,i}^{(j_{k})}-E_{k,1}^{(j_{k})}\right| \xrightarrow{wp1}0$. This implies, from Lemma \ref{lem:model_error} and since Algorithm \ref{alg:sublisting} maintains full-linearity, that as $j_{k}\to\infty$, \[\left\| \nabla f\left(\BFX_{k}\right)-\nabla M_{k}^{(j_{k})}\left(\BFX_{k}\right)\right\| \leq\kappa_{1}\left(\Deltatilde_{k}w^{j_{k}-1}\right)^{\theta}+\kappa_{2}\frac{\sum_{i=1}^{p}\left|E_{k,i}^{(j_{k})}-E_{k,1}^{(j_{k})}\right|}{\left(\Deltatilde_{k}w^{j_{k}-1}\right)}\xrightarrow{wp1}0.\]
%Note that following the similar steps in \cite[p. 200]{conschvic2009} the full-linearity of the model is maintained in $\mcB(\BFX_{k},\Delta_{k})$ in its stochastic sense although the trust-region radius retrieved at the end of Algorithm \ref{alg:sublisting} can be larger than $$ in which the poised set is selected. 
Hence we have arrived at a contradiction since we argued that $\left\| \nabla M_{k}^{(j_{k})}\left(\BFX_{k}\right)\right\| \xrightarrow{wp1} 0$ but then $\|\nabla f(\BFX_{k})\| = c' \neq 0$ by the contrapositive assumption.

\end{proof}

\bibliographystyle{plain}
\bibliography{all,stochastic_optimization}
\end{document}